\newtheorem{theorem}{Theorem}[section]
\newtheorem{lemma}{Lemma}[section]
\newtheorem{proposition}{Proposition}[section]
\newtheorem{remark}{Remark}[section]
\begin{document}
\title[Fisher-KPP waves and the minimal speed on hexagonal lattice]
{Fisher-KPP waves and the minimal speed on hexagonal lattice$^\S$}
\thanks{$\S$ Jian Fang and Jian Wang are supported in part by NSF of China (No. 12171119). Yifei Li is supported in part by NSF of China (No. 12301624). Yijun Lou is partially supported by The Hong Kong Research Grants Council (No. 15304821).}
\author[J. Fang, Y.  Li, Y.  Lou and J. Wang]{Jian Fang$^\dag$, Yifei Li$^\ddag$, Yijun Lou$^\sharp$ and Jian Wang$^\ddag$}
\thanks{$\dag$  Institute for Advanced Studies in Mathematics and School of Mathematics, Harbin Institute of Technology, Harbin 150001, China.}
\thanks{$\ddag$ School of Mathematics, Harbin Institute of Technology, Harbin 150001, China.}
\thanks{$\sharp$ Department of Applied Mathematics, Hong Kong Polytechnic University, Hong Kong, China.}
\thanks{E-mails: {\sf jfang@hit.edu.cn} (J. Fang), {\sf yifeili@hit.edu.cn} (Y.  Li), {\sf yijun.lou@polyu.edu.hk} (Y.  Lou), {\sf jwang815@163.com} (J. Wang).}
\date{\today}

\begin{abstract}
The hexagonal structure is ubiquitous in nature. The propagation phenomena occurring in a media with a hexagonal structure remain to be explored. One way of exploring this question is to formulate lattice dynamical systems and analyze the propagation dynamics. In this paper, we propose a lattice differential equation model featuring a discrete diffusion operator with the hexagonal structure, and a monostable nonlinear term known as the Fisher-KPP mechanism in modeling population growth. A rigorous analysis is conducted on the traveling waves, thoroughly establishing the existence and uniqueness (up to translation) of the traveling waves. Moreover, the periodicity and monotonicity of the minimal wave speed concerning an angle are demonstrated, which is different from the existing results of the minimal wave speed in $\mathbb{R}^2$ and $\mathbb{Z}^2$. Numerical results validate  theoretical analysis and further suggest that the minimal wave speed is also the spreading speed of solutions with compactly supported initial values.
\end{abstract}

\subjclass[2010]{34A33, 35C07}
\keywords{hexagonal lattice, Fisher-KPP equation, traveling waves, minimal wave speed, existence and uniqueness}
\maketitle


\section{Introduction}\label{sec1}

The study of spatial distribution of species is an important research area in ecology and conservation biology, with significant implications for understanding biodiversity, ecosystem dynamics, and the impacts of various interacting factors. Mathematical modeling has shown its power in this field by providing insights into the mechanisms determining the species patterns across landscapes. Integrating mathematical frameworks with ecological data enable researchers to simulate and predict patterns of range expansion, offering valuable guidance for conservation strategies and management practices.

Spatial models typically integrate various biological and environmental factors, such as reproduction and death rates, dispersal mechanisms, habitat suitability, and interspecies interactions. These models can range from deterministic equations to complex stochastic models. The  most widely used deterministic frameworks include reaction-diffusion equations \cite{CC, LL}, which describe the change in population densities over space and time,  integrodifference equations \cite{Lu}, which capture discrete-time dispersal processes, and lattice equations, which model dispersal processes in discrete space habitats. One of the classical reaction-diffusion models is the Fisher-KPP type equation \cite{F, KPP} 
\begin{equation}
	\label{eq1.0}
	v_t=v_{xx}+f(v), \ x\in\mathbb{R},\ t>0,
\end{equation}
where the growth function $f$ satisfies the following conditions
\begin{equation}
	\label{eq1.3}
	\begin{aligned}
		f\in C^1([0,1]), \ f(0)=f(1)=0, \ f(s)>0, \ f'(0)>0 \ \text{and} \ f(s)\leq f'(0)s, \ s\in (0,1).
	\end{aligned}	
\end{equation}
This equation has been applied to study a variety of spreading phenomena in ecology and biology \cite{M}. A counterpart lattice equation of \eqref{eq1.0} can be derived by discretizing the spatial habitat when individuals move along a one-dimensional line grid as follows 
\begin{equation}
	\label{eq1.1}
	v'_i=v_{i+1}+v_{i-1}-2v_i+f(v_i), \ i\in\mathbb{Z}.
\end{equation}
For \eqref{eq1.1} with the Fisher-KPP type nonlinearity, Zinner et al.~\cite{ZHH} proved the existence and nonexistence of traveling waves. Besse et al.  recently~\cite{BFRZ} obtained the logarithmic shift for the exact location of solution level sets. For more general monostable lattice differential equations (LDEs), Chen and Guo \cite{CG, CG2} established the existence, uniqueness and stability of traveling waves. We refer to~\cite{AFP, LZ, WHW, WZ1} for extensions with time delay or non-local interactions and~\cite{GH, PW} for extensions with heterogeneous environments. When $f$ is of bistable type, research on traveling waves in LDEs includes their existence and stability~\cite{CMS, FZ, WH, Z1, Z2}, as well as the occurrence of the pinning phenomenon~\cite{BC, K}. For one-dimensional LDEs with multiple variables, propagation dynamics have been studied in various contexts, including epidemiological models~\cite{CGH}, the Lotka--Volterra model~\cite{GW1}, and the FitzHugh--Nagumo model~\cite{HS, SH}.

For two-dimensional LDEs, Guo and Wu~\cite{GW2} studied the following equation on a square lattice:
\begin{equation}
	\label{eq1.4}
	v'_{i,j}=v_{i+1,j}+v_{i-1,j}+v_{i,j+1}+v_{i,j-1}-4v_{i,j}+f(v_{i,j}), \ (i,j)\in\mathbb{Z}^2,
\end{equation}
where the diffusion happens between the nearest four neighbors (the von Neumann neighborhood). With a monostable structure, they proved the existence and uniqueness of traveling waves of \eqref{eq1.4}. With a bistable structure, the existence, uniqueness, and stability of traveling waves have been investigated in~\cite{CMV, HHV, S}. We also refer to ~\cite{CLW, CLW2} for extensions with time delay or non-local interactions. Building on the square lattice as a fundamental structure, subsequent studies have explored other spatial configurations on two-dimensional lattices, including periodic lattices~\cite{DLZ, GW}, striped lattices~\cite{WWR}, and time-periodically shifting habitats~\cite{GS}. Furthermore, some studies have focused on LDEs with more complex individual dispersals on two-dimensional lattices. For example, Hupkes and Van Vleck \cite{HV} established the traveling waves of lattice systems with a mixed operator $p\Delta_++q\Delta_{\times}$, where $p<0\leq q$, $\Delta_+$ denotes the dispersal associated with the nearest-neighbor lattices on the horizontal and vertical directions, and $\Delta_{\times}$ denotes dispersal associated with the next-nearest-neighbor lattices on the diagonal directions. We also refer to \cite{BF} for the traveling waves on monostable equations on two-dimensional lattice with a tree-like structure, where the spatial structure influences the minimal wave speed. For bistable equations on a tree-like lattice, Lou and Morita~\cite{LM} proved the spreading-transition-vanishing trichotomy of solutions.

The hexagonal structure, another typical kind of two-dimensional lattice, is often observed and ubiquitous in many biological systems, which is expected to influence the movement behavior of individuals and distribution of the population. For instance, honeybees construct their hives using hexagonal cells, which may enhance storage capacity and provide sufficient space for their brood \cite{YDKZGD}. Similarly, cells in many plant and animal tissues are arranged in an approximately hexagonal structure \cite{CAME, L}. Certain corals, such as brain corals, exhibit hexagonal patterns in their skeletal structures, believed to optimize space and structural integrity \cite{BH}. Propagation phenomena may also occur on spatial structures with hexagons, such as the transmission of chemical signals through cells~\cite{HFMMM}. Few models associated with such structures, both at macro and micro scales, have been proposed across various scientific fields. Spatial dynamics in hexagonal neighborhoods have been  simulated using cellular automata \cite{KK} and individual-based models \cite{FJS}. Furthermore, colonization population dynamics on hexagonal lattices have been explored through stochastic cellular automata, formulated by stochastic process arguments \cite{KRW, RWK}.

For general LDEs, the propagation dynamics exhibits periodicity when the underlying lattice has a symmetric structure. For monostable equations on a square lattice, numerical results have shown that the minimal wave speed attains its maximum along the lattice lines and its minimum across the lattice lines~\cite{CLW}. However, the dependence of the minimal wave speed on the propagation angle remains not fully understood. It is therefore of interest to investigate how the nonlinearity and distribution of nodes in admissible directions influence the propagation dynamics of LDEs with various spatial structures. Motivated by these considerations, we investigate the propagation dynamics on the hexagonal lattice, with particular emphasis on the angular dependence of the minimal wave speed. We point out that our analysis demonstrates that the minimal wave speed is monotonic between the directions corresponding to its maximum and minimum values, indicating that the distribution of nodes along different admissible directions does not sensitively influence the wave speed. However, when the nonlinearity is of bistable type, this dependence can become highly complex and counterintuitive according to some numerical observations, which will be explored in a future work.

The rest of this paper is organized as follows. The model formulation and main theorems are presented in Section \ref{sec:Model}. The existence and uniqueness of the traveling waves are established in Sections~\ref{se2} and \ref{se3}, respectively, using methods developed in \cite{CC2, CG2, DK, GW2, WZ2, WZ}. By applying the implicit function theorem and conducting careful calculations based on series expansions, we establish the monotonicity of the minimal wave speed with respect to angle within a period in Section~\ref{se4}, which constitutes the main motivation of the current study. Section~\ref{sec:Nume} presents numerical results for both the minimal wave speed and the spreading speed, with the latter measured from a Cauchy problem with compactly supported initial data. A discussion in Section~\ref{sec:Dis} concludes this paper.

\section{The model and key mathematical results}\label{sec:Model}
The model can be formulated by integrating the population growth process with dispersal in a hexagonal lattice. To begin with, we provide a schematic description of the hexagonal lattice, which consists of nodes and edges connecting the nearest adjacent nodes, as illustrated in Figure~\ref{fig1}.
\begin{figure}[h]
	\centering
	\begin{tikzpicture}[>=stealth,thick]
		\node (s00) at (0,0) [circle, draw=black]{};
		\node (s10) at (2,0) [circle, draw, fill=blue!50]{};
		\node (s20) at (4,0) [circle, draw, fill=red!50]{};
		\draw [dashed,] (s20)--(4.5,-0.866); \draw [dashed,] (s20)--(4.5,0.866); \draw [dashed,] (s20)--(5,0);
		\node (s-10) at (-2,0) [circle, draw, fill=blue!50]{};
		\node (s-20) at (-4,0) [circle, draw, fill=red!50]{};
		\draw [dashed,] (s-20)--(-4.5,-0.866); \draw [dashed,] (s-20)--(-4.5,0.866); \draw [dashed,] (s-20)--(-5,0);
		\node (s-11) at (-3,1.732) [circle, draw, fill=red!50]{};
		\draw [dashed,](s-11)--(-3.5,2.598); \draw [dashed,](s-11)--(-4,1.732);
		\node (s01) at (-1,1.732) [circle, draw, fill=blue!50]{};
		\node (s11) at (1,1.732) [circle, draw, fill=blue!50]{};
		\node (s21) at (3,1.732) [circle, draw, fill=red!50]{};
		\draw [dashed,](s21)--(3.5,2.598); \draw [dashed,](s21)--(4,1.732);
		\node (s02) at (-2,3.464) [circle, draw, fill=red!50]{};
		\draw [dashed,](s02)--(-1.5,4.33); \draw [dashed,](s02)--(-2.5,4.33); \draw [dashed,](s02)--(-3,3.464);
		\node (s12) at (0,3.464) [circle, draw, fill=red!50]{};
		\draw [dashed,] (s12)--(-0.5,4.33); \draw [dashed,] (s12)--(0.5,4.33); 
		\node (s22) at (2,3.464) [circle, draw, fill=red!50]{};
		\draw [dashed,](s22)--(1.5,4.33); \draw [dashed,](s22)--(2.5,4.33); \draw [dashed,](s22)--(3,3.464);
		\node (s-2-1) at (-3,-1.732) [circle, draw, fill=red!50]{};
		\draw [dashed,](s-2-1)--(-3.5,-2.598); \draw [dashed,](s-2-1)--(-4,-1.732);
		\node (s-1-1) at (-1,-1.732) [circle, draw, fill=blue!50]{};
		\node (s0-1) at (1,-1.732) [circle, draw, fill=blue!50]{};
		\node (s1-1) at (3,-1.732) [circle, draw, fill=red!50]{};
		\draw [dashed,](s1-1)--(3.5,-2.598); \draw [dashed,](s1-1)--(4,-1.732);
		\node (s-2-2) at (-2,-3.464) [circle, draw, fill=red!50]{};
		\draw [dashed,](s-2-2)--(-1.5,-4.33); \draw [dashed,](s-2-2)--(-2.5,-4.33); \draw [dashed,](s-2-2)--(-3,-3.464);
		\node (s-1-2) at (0,-3.464) [circle, draw, fill=red!50]{};
		\draw [dashed,] (s-1-2)--(-0.5,-4.33); \draw [dashed,] (s-1-2)--(0.5,-4.33); 
		\node (s0-2) at (2,-3.464) [circle, draw, fill=red!50]{};
		\draw [dashed,](s0-2)--(1.5,-4.33); \draw [dashed,](s0-2)--(2.5,-4.33); \draw [dashed,](s0-2)--(3,-3.464);
		\draw [-] (s02)--(s12)node[]{};
		\draw [-] (s12)--(s22)node[]{};
		\draw [-] (s-11)--(s01)node[]{};
		\draw [-] (s01)--(s11)node[]{};
		\draw [-] (s11)--(s21)node[]{};
		\draw [-] (s-20)--(s-10)node[]{};
		\draw [-] (s-10)--(s00)node[]{};
		\draw [-] (s00)--(s10)node[]{};
		\draw [-] (s10)--(s20)node[]{};
		\draw [-] (s-2-1)--(s-1-1)node[]{};
		\draw [-] (s-1-1)--(s0-1)node[]{};
		\draw [-] (s0-1)--(s1-1)node[]{};
		\draw [-] (s-2-2)--(s-1-2)node[]{};
		\draw [-] (s-1-2)--(s0-2)node[]{};
		\draw [-] (s02)--(s-11)node[]{};
		\draw [-] (s-11)--(s-20)node[]{};
		\draw [-] (s12)--(s01)node[]{};
		\draw [-] (s01)--(s-10)node[]{};
		\draw [-] (s-10)--(s-2-1)node[]{};
		\draw [-] (s22)--(s11)node[]{};
		\draw [-] (s11)--(s00)node[]{};
		\draw [-] (s00)--(s-1-1)node[]{};
		\draw [-] (s-1-1)--(s-2-2)node[]{};
		\draw [-] (s21)--(s10)node[]{};
		\draw [-] (s10)--(s0-1)node[]{};
		\draw [-] (s0-1)--(s-1-2)node[]{};
		\draw [-] (s20)--(s1-1)node[]{};
		\draw [-] (s1-1)--(s0-2)node[]{};
		\draw [-] (s22)--(s21)node[]{};
		\draw [-] (s21)--(s20)node[]{};
		\draw [-] (s12)--(s11)node[]{};
		\draw [-] (s11)--(s10)node[]{};
		\draw [-] (s10)--(s1-1)node[]{};
		\draw [-] (s02)--(s01)node[]{};
		\draw [-] (s01)--(s00)node[]{};
		\draw [-] (s00)--(s0-1)node[]{};
		\draw [-] (s0-1)--(s0-2)node[]{};
		\draw [-] (s-11)--(s-10)node[]{};
		\draw [-] (s-10)--(s-1-1)node[]{};
		\draw [-] (s-1-1)--(s-1-2)node[]{};
		\draw [-] (s-20)--(s-2-1)node[]{};
		\draw [-] (s-2-1)--(s-2-2)node[]{};
		\node at (0.5,-0.2) []{\tiny$(0,0)$};
		\node at (2.5,-0.2) []{\tiny$(1,0)$};
		\node at (4.5,-0.2) []{\tiny$(2,0)$};
		\node at (-2.6,-0.2) []{\tiny$(-1,0)$};
		\node at (-4.6,-0.2) []{\tiny$(-2,0)$};
		\node at (-3.8,1.5) []{\tiny$(-\frac{3}{2},\frac{\sqrt{3}}{2})$};
		\node at (-1.8,1.5) []{\tiny$(-\frac{1}{2},\frac{\sqrt{3}}{2})$};
		\node at (1.7,1.5) []{\tiny$(\frac{1}{2},\frac{\sqrt{3}}{2})$};
		\node at (3.7,1.5) []{\tiny$(\frac{3}{2},\frac{\sqrt{3}}{2})$};
		\node at (-2.75,3.25) []{\tiny$(-1,\sqrt{3})$};
		\node at (0.65,3.25) []{\tiny$(0,\sqrt{3})$};
		\node at (2.65,3.25) []{\tiny$(1,\sqrt{3})$};
		\node at (-3.95,-2) []{\tiny$(-\frac{3}{2},-\frac{\sqrt{3}}{2})$};
		\node at (-1.95,-2) []{\tiny$(-\frac{1}{2},-\frac{\sqrt{3}}{2})$};
		\node at (1.85,-2) []{\tiny$(\frac{1}{2},-\frac{\sqrt{3}}{2})$};
		\node at (3.85,-2) []{\tiny$(\frac{3}{2},-\frac{\sqrt{3}}{2})$};	
		\node at (-2.9,-3.7) []{\tiny$(-1,-\sqrt{3})$};
		\node at (0.8,-3.7) []{\tiny$(0,-\sqrt{3})$};
		\node at (2.8,-3.7) []{\tiny$(1,-\sqrt{3})$};	
		\draw [->](-7,-3.5)--(-7,-1.5)node[left=0.05cm]{\footnotesize$y$};
		\draw [->](-8,-2.5)--(-6,-2.5)node[right=0.05cm]{\footnotesize$x$};				
	\end{tikzpicture}
	\caption{A hexagonal lattice described by the coordinates $x$-$y$.}\label{fig1}		
\end{figure}
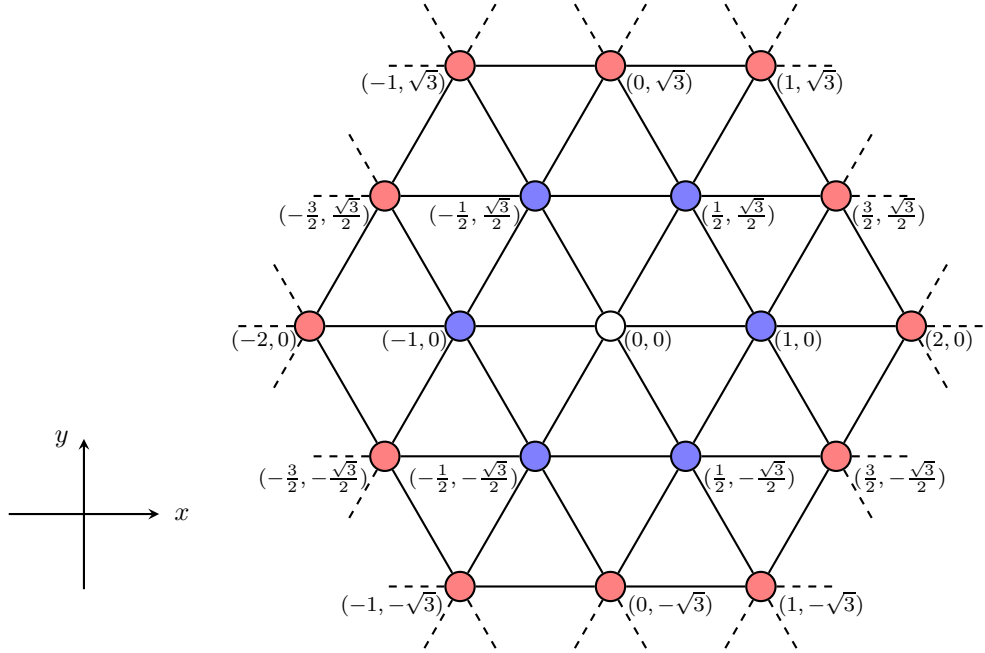

We denote the hexagonal lattice by  
\begin{equation*}
	\mathbb{H}:=\bigcup_{i,j\in\mathbb{Z}}\left((i,\sqrt{3}j)\cup(\frac{1}{2}+i,\frac{\sqrt{3}}{2}+\sqrt{3}j)\right).
\end{equation*}
Note that each node in the hexagonal lattice has six nearest neighbors. We assume that movement occurs only between these nearest-neighbor nodes, and the probability of the movement to each nearest-neighbor node is exactly the same, being $1/6$. Then, the movement can be described by an operator $\Delta_h$ acting on a function $u:\mathbb{H}\times\mathbb{R}\rightarrow\mathbb{R}$, defined as
\begin{equation*}
	\label{eq1.6}
	\begin{aligned}
		\frac{1}{6}\Delta_h[u](x,y,\cdot):=&\frac{1}{6}u(x+1,y,\cdot)+\frac{1}{6}u(x-1,y,\cdot)+\frac{1}{6}u(x+\frac{1}{2},y+\frac{\sqrt{3}}{2},\cdot)\\
		&+\frac{1}{6}u(x-\frac{1}{2},y-\frac{\sqrt{3}}{2},\cdot)+\frac{1}{6}u(x+\frac{1}{2},y-\frac{\sqrt{3}}{2},\cdot)+\frac{1}{6}u(x-\frac{1}{2},y+\frac{\sqrt{3}}{2},\cdot)\\
		&-u(x,y,\cdot), \ (x,y)\in\mathbb{H}.
	\end{aligned}		
\end{equation*}
Please note that this operator is derived by using the $x$-$y$ coordinates illustrated in Figure~\ref{fig1}. Combining with the population birth-death process, we have the following model
\begin{equation}
	\label{eq1.5}
	u'(x,y,t)=\frac{1}{6}\Delta_h[u](x,y,t)+f(u(x,y,t)), \ (x,y)\in \mathbb{H}, \ t>0.
\end{equation}
The population growth function $f$ is the Fisher-KPP type, that is, $f$ satisfies \eqref{eq1.3}. Moreover, we assume that  there exists a constant $N\geq0$ and $\theta\in(0,1]$ such that 
\begin{equation}
	\label{eq3.2}
	f'(0)s-N s^{1+\theta}\leq f(s), \ s\in[0,1].
\end{equation}

We are interested in traveling waves of \eqref{eq1.5}. For some function $U\in C^1(\mathbb{R})$ and some constant $c\in\mathbb{R}$, we may consider solutions of the form 
\begin{equation*}
	u(x,y,t)=U(ct-(x,y)\cdot(\kappa,\sigma))		
\end{equation*}	
to be the traveling waves of \eqref{eq1.5}, where $(\kappa,\sigma):=(\cos\alpha,\sin\alpha)$ , $\alpha\in[0,2\pi)$, is the direction on $\mathbb{H}$. Here, Fisher-KPP waves are defined as $U(ct-x\kappa-y\sigma)$ satisfying $U(-\infty)=0$, $U(+\infty)=1$ and $U\in[0,1]$. Combining \eqref{eq1.5} with \eqref{eq1.3}, we obtain 
\begin{equation}
	\label{eq1.17}
	\left\{\begin{aligned}
		&cU'(\xi)=\frac{1}{6}\mathcal{D}_h[U](\xi)+f(U(\xi)),&&\xi\in\mathbb{R},\\
		&U(\xi)\in[0,1], &&\xi\in\mathbb{R},\\
		&\lim_{\xi\rightarrow-\infty}U(\xi)=0, \ \lim_{\xi\rightarrow+\infty}U(\xi)=1, &&\xi\in\mathbb{R},
	\end{aligned}\right.
\end{equation}
where $\mathcal{D}_h: C^1(\mathbb{R})\rightarrow C^1(\mathbb{R})$ is a linear operator given by
\begin{equation*}
	\begin{aligned}		
		\mathcal{D}_h[U](\xi)=&U(\xi-\kappa)+U(\xi+\kappa)+U(\xi-\frac{1}{2}\kappa-\frac{\sqrt{3}}{2}\sigma)+U(\xi+\frac{1}{2}\kappa+\frac{\sqrt{3}}{2}\sigma)\\
		&+U(\xi-\frac{1}{2}\kappa+\frac{\sqrt{3}}{2}\sigma)+U(\xi+\frac{1}{2}\kappa-\frac{\sqrt{3}}{2}\sigma)-6U(\xi).
	\end{aligned}	
\end{equation*}

The existence of the traveling waves is solely dependent on a constant $c^*$, defined as
\begin{equation}	
	\label{eq1.14}
	\begin{aligned}
		c^*:=&\inf_{\lambda>0}\frac{1}{\lambda}\Bigg[\frac{1}{6}\left(\mathrm{e}^{-\kappa\lambda}+\mathrm{e}^{\kappa\lambda}+\mathrm{e}^{-(\frac{1}{2}\kappa+\frac{\sqrt{3}}{2}\sigma)\lambda}+\mathrm{e}^{(\frac{1}{2}\kappa+\frac{\sqrt{3}}{2}\sigma)\lambda}+\mathrm{e}^{-(\frac{1}{2}\kappa-\frac{\sqrt{3}}{2}\sigma)\lambda}+\mathrm{e}^{(\frac{1}{2}\kappa-\frac{\sqrt{3}}{2}\sigma)\lambda}-6\right)\\
		&+f'(0)\Bigg]:=\inf_{\lambda>0}\frac{g(\lambda)}{\lambda}.
	\end{aligned}	
\end{equation}
Clearly, for all $\alpha\in[0,2\pi)$, $g(\lambda)$ is convex with respect to $\lambda\in(0,+\infty)$, and $g(\lambda)\geq g(0)=f'(0)>0$, which implies that $c^*$ is a well-defined real number, and $c^*>0$.

Next, we present the main results of this paper.
\begin{theorem}[Existence of traveling waves]
	\label{th1.2}
	Problem \eqref{eq1.17} admits a solution $(c,U)$ if and only if $c\geq c^*$.
\end{theorem}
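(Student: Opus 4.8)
The plan is to separate the two implications and, for the existence direction, to treat the supercritical speeds $c>c^*$, the critical speed $c=c^*$, and the subcritical speeds $c<c^*$ by the standard monostable machinery adapted to the nonlocal operator $\mathcal{D}_h$. The whole analysis is organized around the characteristic function $g$ in \eqref{eq1.14}: since $g$ is convex with $g(0)=f'(0)>0$ and $c^*=\inf_{\lambda>0}g(\lambda)/\lambda$, the concave map $\lambda\mapsto c\lambda-g(\lambda)$ is negative at $\lambda=0^+$ and at $\lambda=+\infty$, so for $c>c^*$ the equation $c\lambda=g(\lambda)$ has exactly two positive roots $\lambda_1(c)<\lambda_2(c)$, for $c=c^*$ it has a single double root, and for $c<c^*$ it has none because $g(\lambda)\ge c^*\lambda>c\lambda$ for every $\lambda>0$.

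\textbf{Sufficiency for $c>c^*$.} I would construct a monotone wave by the sub--super-solution and monotone-iteration scheme of the cited works. Taking the smaller root $\lambda_1$, the function $\ov U(\xi)=\min\{\mathrm{e}^{\lambda_1\xi},1\}$ is a supersolution of \eqref{eq1.17}, where the KPP bound $f(s)\le f'(0)s$ in \eqref{eq1.3} is exactly what makes the linear exponential admissible. For the subsolution I would invoke the refined lower bound \eqref{eq3.2}: setting $\underline U(\xi)=\max\{0,\mathrm{e}^{\lambda_1\xi}-q\,\mathrm{e}^{(1+\gamma)\lambda_1\xi}\}$ with $\gamma\in(0,\theta]$ small enough that $(1+\gamma)\lambda_1<\lambda_2$ and $q$ large, the term $-Ns^{1+\theta}$ in \eqref{eq3.2} is dominated by the correction $-q\,\mathrm{e}^{(1+\gamma)\lambda_1\xi}$. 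One then checks $\underline U\le\ov U$ and runs the monotone iteration between them to obtain a solution $U$ that satisfies the boundary conditions automatically.

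\textbf{Sufficiency for $c=c^*$ and necessity.} At the critical speed the super/subsolution pair degenerates as $\lambda_1,\lambda_2$ coalesce, so I would instead take waves $U_n$ at speeds $c_n\downarrow c^*$, normalize them by $U_n(0)=\tfrac12$ using translation invariance, and extract a limit by Helly's selection theorem together with the uniform Lipschitz bound that \eqref{eq1.17} provides (note $c^*>0$). The normalization keeps the limit from collapsing to a constant, and monotonicity forces $U(\pm\infty)\in\{0,1\}$ with the correct values. For necessity, suppose $(c,U)$ solves \eqref{eq1.17} with $c<c^*$. Writing $f(U)=f'(0)U-h(U)$ with $h\ge0$ and $h(s)=O(s^{1+\theta})$ near $0$, I would multiply \eqref{eq1.17} by $\mathrm{e}^{-\lambda\xi}$ and integrate over $\mathbb{R}$ to obtain a relation of the form $(g(\lambda)-c\lambda)\,\mathcal{L}[U](\lambda)=\mathcal{L}[h(U)](\lambda)$, with $\mathcal{L}$ the two-sided Laplace transform. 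Since $\mathcal{L}[U]$ is the transform of a nonnegative function, it must develop a singularity at its abscissa of convergence, and Ikehara's Tauberian theorem then identifies the decay rate of $U$ at $-\infty$ with a positive root of $g(\lambda)-c\lambda=0$; but for $c<c^*$ this function is strictly positive on $(0,\infty)$, a contradiction.

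\textbf{Main obstacle.} I expect the critical case $c=c^*$ to be the crux. The difficulty lies not in the convergence $U_n\to U$ but in the nondegeneracy: ruling out that the normalized limit is trivial and confirming that it still connects $0$ to $1$, which requires careful uniform control near both ends and a tail estimate that survives the passage to the limit. The necessity argument is similarly delicate, since justifying the Laplace-transform identity and the hypotheses of Ikehara's theorem for the nonlocal operator $\mathcal{D}_h$ first demands establishing at least exponential decay of $U$ at $-\infty$.
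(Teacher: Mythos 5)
Your two sufficiency arguments are essentially the paper's own. For $c>c^*$ the paper uses exactly your pair: $\overline{U}(\xi)=\min\{1,\mathrm{e}^{\lambda_1\xi}\}$ and $\underline{U}(\xi)=\max\{0,(1-M\mathrm{e}^{\gamma\xi})\mathrm{e}^{\lambda_1\xi}\}$ (your correction $q\,\mathrm{e}^{(1+\gamma)\lambda_1\xi}$ is the same subsolution after relabeling $\gamma$), with the KPP upper bound in \eqref{eq1.3} validating the supersolution and the lower bound \eqref{eq3.2} validating the subsolution (Lemma \ref{le3.2}), followed by the weighted monotone iteration \eqref{eq3.12}. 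For $c=c^*$ the paper also takes waves at speeds $c^n\downarrow c^*$, normalizes $U^{n_k}(0)=\tfrac12$, extracts a locally uniform limit by Arzel\`a--Ascoli, and settles the nondegeneracy you identify as the crux by applying Fatou's lemma to $c^{n_k}=\int_{\mathbb{R}}f(U^{n_k})$, which forces $f(U^*(\pm\infty))=0$ and, with the normalization, the correct limits.

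Where you genuinely diverge is the necessity. The paper uses no Laplace transform there: it first shows (Lemma \ref{le3.3}) that $U(\xi+\delta)/U(\xi)$ is uniformly bounded, then runs an elementary argument on $\int U'/U$ using Jensen's inequality, obtaining for suitable $\xi_1\ll\xi_2$ the characteristic inequality $c\lambda(\xi_1,\xi_2)\geq\frac16\sum_{m}\left(\mathrm{e}^{\lambda(\xi_1,\xi_2)\delta_m-\varepsilon}+\mathrm{e}^{-\lambda(\xi_1,\xi_2)\delta_m-\varepsilon}\right)-1+f'(0)-\varepsilon$, and letting $\varepsilon\to0$ gives $c\geq c^*$. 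Your route --- exponential decay at $-\infty$, the transform identity $(g(\lambda)-c\lambda)L(\lambda)=\int_{\mathbb{R}}\mathrm{e}^{-\lambda\xi}\left(f'(0)U-f(U)\right)\mathrm{d}\xi$, and a singularity argument --- is the Carr--Chmaj-style approach. The paper does build exactly this machinery (the decay estimate of Lemma \ref{le3.4}, whose proof does not use $c\geq c^*$; the identity \eqref{eq3.293}; Widder's singularity lemma; Ikehara), but deploys it only later, for uniqueness. Your approach therefore buys reuse of the uniqueness infrastructure and sharper information (any wave's decay rate must be a root of $G(c,\cdot)=0$), at the cost of needing the decay estimate first; the paper's Jensen argument is more elementary and needs only the ratio bound.

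Two points in your necessity sketch need patching, though both are fixable. First, the singularity argument requires the abscissa of convergence $B$ of $L$ to be \emph{finite}; if $U$ decayed superexponentially there would be no singularity to exploit. This follows in one line from your identity: since $0\leq f'(0)U-f(U)\leq f'(0)U$, one gets $g(\lambda)-c\lambda\leq f'(0)$ on the whole convergence strip, while $g(\lambda)-c\lambda\to+\infty$ as $\lambda\to+\infty$ (at least one $\delta_m\neq0$), so $B<\infty$. Second, the tool you want at $\lambda=B$ is not Ikehara but Widder's theorem that the Laplace transform of a nonnegative function is singular at its real abscissa of convergence (the paper cites \cite[Lemma 5b]{W}): Ikehara requires monotonicity, which a solution of \eqref{eq1.17} is not a priori known to possess (the paper must work around this in Lemma \ref{le3.42} via $\hat{U}(\xi)=U(\xi)\mathrm{e}^{\xi/c}$), and it yields precise asymptotics you do not need for nonexistence. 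With Widder in hand, the bootstrap $f'(0)U-f(U)=O(U^{1+\theta})$ shows the numerator is analytic for $0<\mathrm{Re}\,\lambda<(1+\theta)B$, so the singularity at $B$ would have to be a zero of $g(\lambda)-c\lambda$, which does not exist when $c<c^*$. With these repairs your argument is sound, but as written it leans on a theorem whose hypotheses you have not secured.
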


\begin{theorem}[Uniqueness of traveling waves]
	\label{th1.4}
	For $c\geq c^*$, solutions of \eqref{eq1.17} are unique up to a translation.
\end{theorem}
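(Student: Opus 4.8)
The plan is to fix $c\ge c^*$, take two solutions $(c,U_1)$ and $(c,U_2)$ of \eqref{eq1.17}, and prove that after a single translation $U_1\equiv U_2$, by combining a precise description of the tail at $-\infty$ with a sliding argument powered by a comparison principle. First I would record the qualitative structure of an arbitrary wave of speed $c$. Writing $u(x,y,t)=U(\xi)$ with $\xi=ct-x\kappa-y\sigma$ realizes $U$ as a stationary profile of \eqref{eq1.5} in the moving frame; since $f$ is of Fisher-KPP type \eqref{eq1.3} and every shift in $\mathcal{D}_h$ carries a positive weight, the associated lattice-parabolic flow is order preserving and enjoys a strong comparison principle (in the spirit of \cite{CG2, WZ}). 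Using this together with the boundary conditions $U(-\infty)=0$, $U(+\infty)=1$, I would show, if not already obtained in Section~\ref{se2}, that every wave is \emph{strictly increasing} with $0<U<1$ on $\mathbb{R}$. Strict monotonicity is what makes a translation an effective sliding parameter.

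Second, I would pin down the decay of $U$ as $\xi\to-\infty$ by linearizing \eqref{eq1.17} at $U=0$. Seeking $U\sim\mathrm{e}^{\lambda\xi}$ yields the characteristic relation $c\lambda=g(\lambda)$ with $g$ as in \eqref{eq1.14}. By convexity of $g$ and the definition of $c^*$, this relation has, for $c>c^*$, exactly two positive roots $0<\lambda_1<\lambda_2$, and for $c=c^*$ a single double root $\lambda^*$. Assumption \eqref{eq3.2} controls the nonlinear remainder and, through a standard sub- and super-solution (Ikehara-type) argument, gives $U(\xi)\sim A\,\mathrm{e}^{\lambda_1\xi}$ as $\xi\to-\infty$ when $c>c^*$, and $U(\xi)\sim A\,(-\xi)\,\mathrm{e}^{\lambda^*\xi}$ when $c=c^*$, for some $A>0$. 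The decisive consequence is that the leading tail of a wave is determined up to the single constant $A$, and a translation $U(\cdot+\tau)$ rescales $A$ monotonically; hence the tails of $U_1$ and $U_2$ can be matched by one translation.

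Third, with matched tails I would run the sliding method. After replacing $U_2$ by a translate whose tail coefficient equals that of $U_1$, set $\tau^*:=\inf\{\tau\ge 0:\ U_2(\cdot+\tau)\ge U_1 \text{ on } \mathbb{R}\}$. Monotonicity, the boundary values, and the tail normalization guarantee $U_2(\cdot+\tau)\ge U_1$ for all large $\tau$, so $\tau^*$ is finite, and $W:=U_2(\cdot+\tau^*)-U_1\ge 0$. If $W\not\equiv 0$, then $W$ satisfies a linear lattice equation whose zeroth-order coefficient is a bounded mean value of $f'$, so the strong comparison principle forces $W>0$ on all of $\mathbb{R}$; the matched-tail asymptotics then show the inequality survives a small further decrease of $\tau^*$, contradicting minimality. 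Therefore $W\equiv 0$, i.e.\ $U_2(\cdot+\tau^*)\equiv U_1$, which is the assertion.

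I expect the main obstacle to lie in two places. Because $\mathcal{D}_h$ is of mixed advance-delay type, involving the shifts $\pm\kappa$ and $\pm(\tfrac{1}{2}\kappa\pm\tfrac{\sqrt{3}}{2}\sigma)$, there is no classical ODE phase plane, so the maximum principle must be replaced throughout by the lattice-parabolic comparison principle, and care is needed to verify its hypotheses for the difference equation. The genuinely delicate point is the critical speed $c=c^*$, where the double root produces the algebraic factor $(-\xi)$ in the tail; the sliding estimate near $-\infty$ must then be carried out with this correction so that the ordering is not lost in the limit $\xi\to-\infty$. I would resolve this by a careful sub- and super-solution comparison on a far-left half-line that matches the $(-\xi)\mathrm{e}^{\lambda^*\xi}$ profiles, which is the most technical step of the argument.
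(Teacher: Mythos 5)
Your first two steps coincide with the paper's own preparation: the a priori exponential bound and the Ikehara argument are Lemma~\ref{le3.4} and Lemma~\ref{le3.42} (which give \eqref{eq3.295}, i.e.\ $U(\xi)\sim A\mathrm{e}^{\lambda_1\xi}$ for $c>c^*$ and $U(\xi)\sim A|\xi|\mathrm{e}^{\lambda_1\xi}$ for $c=c^*$, together with strict monotonicity), and your strong comparison principle is Lemma~\ref{le3.6}. The genuine gap is in your third step, the pure-translation sliding. The assertion that ``the matched-tail asymptotics show the inequality survives a small further decrease of $\tau^*$'' fails exactly in the cases that matter. After your own normalization the leading tail coefficients of $U_1$ and $U_2$ at $-\infty$ are equal, so if $\tau^*=0$ then for every $\epsilon>0$ the translate $U_2(\cdot-\epsilon)$ has coefficient $A\mathrm{e}^{-\lambda_1\epsilon}<A$, hence $U_2(\xi-\epsilon)<U_1(\xi)$ for all sufficiently negative $\xi$: the ordering does not survive \emph{any} decrease, and no contradiction with minimality arises. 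Your argument therefore only yields the dichotomy ``$W\equiv0$'' or ``$\tau^*=0$ and $U_2>U_1$ everywhere with identical leading tails'', and excluding the second alternative is precisely the heart of the uniqueness proof; the strong comparison principle cannot do it, since a strict ordering is fully compatible with it. A second, related omission is the behavior at $+\infty$: both profiles are $<1$ and tend to $1$, so neither the nonemptiness of $\{\tau\geq0:\ U_2(\cdot+\tau)\geq U_1\}$ nor the persistence of the ordering near $+\infty$ under a decrease of $\tau$ follows from monotonicity and the boundary values alone. One needs the exponential rate $\lambda_0<0$ of convergence to $1$ (the paper's \eqref{eq3.290}--\eqref{eq3.291}, which your step two never derives), and even with it the ordering at $+\infty$ can be tight at $\tau^*$, in which case decreasing $\tau$ reverses the inequality near $+\infty$ as well.

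The paper circumvents both obstructions with the Chen--Guo squeezing lemma, Lemma~\ref{le3.5}: one compares $U_1$ not with a translate of $U_2$ but with $(1+\tau)U_2(\cdot+\rho\tau)$, where $\rho=\rho(U_2)<0$ is defined in \eqref{eq3.30}. The factor $1+\tau$ creates uniform slack in the region where $U_2$ is close to $1$, and there the KPP sublinearity \eqref{eq3.32}, $f((1+\tau)s)<(1+\tau)f(s)$, forbids a touching point; the definition of $\rho$ makes $(1+\tau)U_2(\xi+\rho\tau)$ decreasing in $\tau$ in the region where $U_2\leq\tau_0$, so shrinking $\tau$ is automatically favorable near $-\infty$. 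Moreover, the final identification never appeals to tails at all: both waves are normalized by $U_1(0)=U_2(0)=\frac{1}{2}$, the sliding in Steps 1--2 of the paper's proof yields $U_2\geq U_1$, and then Lemma~\ref{le3.6} applied at the finite touching point $\xi=0$ gives $U_1\equiv U_2$. If you insist on a pure-translation sliding, you must supply an additional mechanism that excludes two strictly ordered waves with matched tails (this is what Diekmann--Kaper and Carr--Chmaj do, through estimates on the integral equation satisfied by the difference), and you must prove the $+\infty$ asymptotics; as written, your contradiction step does not close.
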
 

\begin{theorem}[Angle-dependent speed]
	\label{th1.1}
	The quantity $c^*=c^*(\alpha)$ is $\frac{\pi}{3}$-periodic with $\alpha\in[0,2\pi)$, monotonically decreasing in $\alpha\in[0,\frac{\pi}{6}]$ and monotonically increasing in $\alpha\in[\frac{\pi}{6},\frac{\pi}{3}]$.	
\end{theorem}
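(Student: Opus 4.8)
The plan is to reduce the full statement to a single pointwise inequality by exploiting the symmetry of the hexagonal lattice. First I would rewrite $g(\lambda)=g(\lambda,\alpha)$ in \eqref{eq1.14} in terms of the three signed projections $p_1=\cos\alpha$, $p_2=\cos(\alpha-\frac{\pi}{3})$, $p_3=\cos(\alpha+\frac{\pi}{3})$ of the lattice directions onto $(\kappa,\sigma)$, so that, since $\cosh$ is even,
\[ g(\lambda,\alpha)=\tfrac13\big[\cosh(\lambda p_1)+\cosh(\lambda p_2)+\cosh(\lambda p_3)\big]+f'(0)-1 . \]
Because the three lattice axes are spaced by $\frac{\pi}{3}$, the map $\alpha\mapsto\alpha+\frac{\pi}{3}$ permutes the multiset $\{|p_1|,|p_2|,|p_3|\}$ (one checks $p_1\mapsto p_3$, $p_2\mapsto p_1$, $p_3\mapsto -p_2$), while $\alpha\mapsto\frac{\pi}{3}-\alpha$ likewise fixes it ($p_1\leftrightarrow p_2$, $p_3\mapsto -p_3$). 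As $g$ depends on the $p_i$ only through $\cosh(\lambda p_i)=\cosh(\lambda|p_i|)$, both maps leave $g(\lambda,\alpha)$, and hence $c^*$, invariant. This yields the $\frac{\pi}{3}$-periodicity and the reflection identity $c^*(\frac{\pi}{3}-\alpha)=c^*(\alpha)$; the latter converts monotone decrease on $[0,\frac{\pi}{6}]$ into monotone increase on $[\frac{\pi}{6},\frac{\pi}{3}]$. Thus it suffices to prove $c^*$ is strictly decreasing on $(0,\frac{\pi}{6})$.

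Next I would set up the differentiation. Since $g$ is smooth and strictly convex in $\lambda$ with $g(0,\alpha)=f'(0)>0$ and $g(\lambda,\alpha)/\lambda\to+\infty$ as $\lambda\to0^+$ and as $\lambda\to\infty$, the infimum in \eqref{eq1.14} is attained at a unique $\lambda^*=\lambda^*(\alpha)>0$ characterized by $\lambda^* g_\lambda(\lambda^*,\alpha)=g(\lambda^*,\alpha)$; strict convexity makes the implicit function theorem applicable, so $\lambda^*(\alpha)$ is $C^1$. The envelope theorem then gives $\frac{d c^*}{d\alpha}=\frac{g_\alpha(\lambda^*,\alpha)}{\lambda^*}$, the term in $\frac{d\lambda^*}{d\alpha}$ dropping by optimality, whence $\operatorname{sign}\frac{dc^*}{d\alpha}=\operatorname{sign} g_\alpha(\lambda^*,\alpha)$. (Equivalently one may bypass the envelope theorem: if $g_\alpha(\lambda,\alpha)<0$ for every $\lambda>0$, then $g(\lambda,\cdot)/\lambda$ is strictly decreasing for each $\lambda$ and the infimum $c^*$ inherits the strict decrease.) It therefore suffices to show $g_\alpha(\lambda,\alpha)<0$ for all $\lambda>0$ and $\alpha\in(0,\frac{\pi}{6})$.

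I would then extract the sign of $g_\alpha$ by a series expansion. Differentiating the $\cosh$-form gives $g_\alpha=-\frac{\lambda}{3}\sum_{j}\sin\theta_j\sinh(\lambda\cos\theta_j)$ over the three equally spaced angles $\theta_j\in\{\alpha,\alpha\pm\frac{\pi}{3}\}$. Using $\sinh(\lambda\cos\theta)=2\sum_{j\ge0}I_{2j+1}(\lambda)\cos((2j+1)\theta)$ together with a product-to-sum identity, one obtains $\sin\theta\,\sinh(\lambda\cos\theta)=\sum_{k\ge1}\big(I_{2k-1}(\lambda)-I_{2k+1}(\lambda)\big)\sin(2k\theta)=\sum_{k\ge1}\frac{4k}{\lambda}I_{2k}(\lambda)\sin(2k\theta)$, a sine series with strictly positive coefficients (here $I_n$ are the modified Bessel functions and I use $I_{n-1}-I_{n+1}=\frac{2n}{\lambda}I_n$). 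Summing over the three angles annihilates every harmonic except multiples of six by a roots-of-unity cancellation, leaving
\[ g_\alpha(\lambda,\alpha)=-12\sum_{m\ge1} m\,I_{6m}(\lambda)\,\sin(6m\alpha). \]
The theorem is thus reduced to the positivity $\sum_{m\ge1} m\,I_{6m}(\lambda)\sin(6m\alpha)>0$ for $\alpha\in(0,\frac{\pi}{6})$ and all $\lambda>0$.

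This positivity is the main obstacle. The fundamental harmonic $m=1$ contributes $I_6(\lambda)\sin(6\alpha)>0$, but the higher harmonics $\sin(6m\alpha)$ change sign on $(0,\frac{\pi}{6})$, so one must rule out their overturning the leading term. For small $\lambda$ this is immediate, since $I_{6m}(\lambda)=O(\lambda^{6m})$ makes the $m=1$ term dominant (indeed $g_\alpha\sim-\frac{\lambda^6}{3840}\sin(6\alpha)$ as $\lambda\to0^+$). The genuine difficulty is uniform control for large $\lambda$, where all $I_{6m}(\lambda)$ are comparable in size and the standard decreasing/convex-coefficient positivity criteria for sine series do not apply. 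Here I would return to the closed form from the same roots-of-unity filter, namely $\sum_{m\ge1}I_{6m}(\lambda)\cos(6m\alpha)=\frac{1}{12}\sum_{k=0}^{5}e^{\lambda\cos(\alpha+k\pi/3)}-\frac12 I_0(\lambda)$, differentiate in $\alpha$, and pair antipodal terms to rewrite the series as $\frac{\lambda}{36}\sum_{k=0}^{2}\sin(\alpha+\frac{k\pi}{3})\sinh(\lambda\cos(\alpha+\frac{k\pi}{3}))$; the remaining task is to dominate the single negative summand (from the second-quadrant angle $\alpha+\frac{2\pi}{3}$, where $\cos<0$) by the two positive ones, using monotonicity of $I_\nu(\lambda)$ in the order $\nu$ and the recurrence above. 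Proving this inequality uniformly in $\lambda$, despite the endpoints forcing the quantity to vanish to high order at $\alpha=0$ and $\alpha=\frac{\pi}{6}$, is the technical heart of the argument.
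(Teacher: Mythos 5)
Your setup is sound and matches the paper's reduction: the periodicity and the reflection identity $c^*(\frac{\pi}{3}-\alpha)=c^*(\alpha)$ follow from the invariance of $g(\lambda,\cdot)$, and the implicit-function/envelope argument correctly gives $\operatorname{sign}\frac{dc^*}{d\alpha}=\operatorname{sign} g_\alpha(\lambda^*,\alpha)$, so everything hinges on showing $g_\alpha(\lambda,\alpha)<0$ for $\alpha\in(0,\frac{\pi}{6})$. Your Bessel-series reformulation $g_\alpha=-12\sum_{m\geq1}mI_{6m}(\lambda)\sin(6m\alpha)$ is also correct (its leading term $-\frac{\lambda^6}{3840}\sin 6\alpha$ agrees with the paper's $n=2$ coefficient). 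But the proof stops exactly at the decisive step: the positivity of $\sum_{m\geq1}mI_{6m}(\lambda)\sin(6m\alpha)$ for all $\lambda>0$ is asserted to be "the technical heart" and then never established. Worse, your fallback plan is circular: pairing antipodal terms in the roots-of-unity filter returns you to $\frac{\lambda}{36}\sum_{k=0}^{2}\sin(\alpha+\frac{k\pi}{3})\sinh(\lambda\cos(\alpha+\frac{k\pi}{3}))$, which is, up to a positive factor, precisely $-g_\alpha$ itself; "dominating the single negative summand by the two positive ones" is a restatement of the claim $g_\alpha<0$, not a method for proving it. So as written the proposal contains a genuine gap at the one place where the theorem has real content.

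What the paper does at this point is worth noting, because it sidesteps the uniform-in-$\lambda$ difficulty you correctly identified. Instead of trying to control the sine series for large $\lambda$, the paper expands $g_\alpha$ as a power series in $\lambda$, writing $g_\alpha=\sum_{n\geq0}\frac{(\lambda^*)^{2n+2}}{3(2n+1)!}\Phi_n(\alpha)$, and proves the \emph{coefficient-wise} statement (Lemma~\ref{le2.1}): $\Phi_0=\Phi_1\equiv0$ on $[0,\frac{\pi}{6}]$, and $\Phi_n(\alpha)<0$ for every $n\geq2$ and $\alpha\in(0,\frac{\pi}{6})$, with equality only at the endpoints. This converts the analytic problem into a purely algebraic one: after a binomial expansion, $\Phi_n$ is written as $\frac{3}{2}\sin\alpha\cos\alpha\sum_{l}\Psi(\alpha,l)$, the index at which the bracket $\binom{2n+1}{2l+1}-\frac{1}{3}\binom{2n+1}{2l}$ changes sign is located at $l^*=[\frac{3n+1}{4}]$, and positive-bracket terms are paired against negative-bracket terms, with ratio estimates on binomial coefficients showing each pair is negative. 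In your language, this is exactly the termwise positivity of the $\lambda^{2n+2}$-coefficients of $\sum_m mI_{6m}(\lambda)\sin(6m\alpha)$ — a statement that involves only finitely many harmonics at each order and therefore never faces the large-$\lambda$ regime where all $I_{6m}(\lambda)$ are comparable. To complete your proof you would need either to supply this combinatorial lemma (or an equivalent), or to find a genuinely new positivity criterion for the Bessel sine series; neither is present in the proposal.
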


By numerically calculating $c^*$ and solving a Cauchy problem with compact supported initial values, we further observe that the spreading speed coincides with the minimal wave speed in all directions.

\section{The existence of traveling waves}\label{se2}

This section is devoted to proving Theorem~\ref{th1.2}, which describes the existence of traveling waves for equation \eqref{eq1.17}. To achieve this, we use methods developed in \cite{CG2, GW2, WZ2, WZ} to establish a set of lemmas and organize them into the preliminaries. Subsequently, we use the lemmas to prove the sufficiency and necessity conditions outlined in Theorem~\ref{th1.2}.

\subsection{Preliminaries}\label{se21}

Note that the expressions for $\mathcal{D}_h[U](\xi)$ and $g(\lambda)$ are quite lengthy. To facilitate the analysis of problems involving  terms $\pm\kappa$, $\pm(\frac{1}{2}\kappa+\frac{\sqrt{3}}{2}\sigma)$ and $\pm(\frac{1}{2}\kappa-\frac{\sqrt{3}}{2}\sigma)$, we introduce the following notations:
\begin{equation}
	\label{eq3.3}
	\delta_1:=\kappa, \ \delta_2:=\frac{1}{2}\kappa+\frac{\sqrt{3}}{2}\sigma,\  \delta_3:=\frac{1}{2}\kappa-\frac{\sqrt{3}}{2}\sigma.
\end{equation}
Clearly, $\delta_1, \delta_2, \delta_3\in[-1,1]$. 

To prove the sufficiency, we need an equivalent form of the first equation in \eqref{eq1.17}. By integrating the first equation in \eqref{eq1.17} form $-\infty$ to $+\infty$, it follows that $c=\int_{-\infty}^{+\infty}f(U(\xi))\mathrm{d}\xi>0$ for $(c,U(\xi))$ satisfying \eqref{eq1.17}. Therefore, we introduce an operator $\mathscr{H}^\mu:C(\mathbb{R})\rightarrow C(\mathbb{R})$ by
\begin{equation}
	\label{eq3.8}	
	\mathscr{H}^\mu[U](\xi)=\mu U(\xi)+\frac{1}{6c}\mathcal{D}_h[U](\xi)+\frac{1}{c}f(U(\xi)),
\end{equation}
where $\mu>\frac{1}{c}\max_{0\leq U\leq1}\{|f'(U)|+1\}$. If $U_1(\xi)\leq U_2(\xi)$, $\xi\in\mathbb{R}$, then there exists $\tilde{U}(\xi)\in [U_1(\xi), U_2(\xi)]$ such that
\begin{equation}
	\begin{aligned}	
		\label{eq3.9}
		&\mathscr{H}^\mu[U_1](\xi)-\mathscr{H}^\mu[U_2](\xi)\\
		=&\mu\left(U_1(\xi)-U_2(\xi)\right)+\frac{1}{6c}(\mathcal{D}_h[U_1](\xi)-\mathcal{D}_h[U_2](\xi))+\frac{1}{c}\left(f(U_1(\xi))-f(U_2(\xi))\right)\\
		=&\left(U_1(\xi)-U_2(\xi)\right)\Bigg[\mu-\frac{1}{c}+\frac{1}{c}f'(\tilde{U}(\xi))+\frac{1}{6c}\sum_{m=1}^{3}\Bigg(\frac{U_1(\xi+\delta_m)-U_2(\xi+\delta_m)}{U_1(\xi)-U_2(\xi)}\\
		&+\frac{U_1(\xi-\delta_m)-U_2(\xi-\delta_m)}{U_1(\xi)-U_2(\xi)}\Bigg)\Bigg]\\
		\leq&0.
	\end{aligned}		
\end{equation}	
In addition, multiplying the two sides of the first equation of \eqref{eq1.17} by $\mathrm{e}^{\mu\xi}$, we have
\begin{equation*}		
	\mathrm{e}^{\mu\xi}U'(\xi)=\mathrm{e}^{\mu\xi}\frac{1}{6c}\mathcal{D}_h[U](\xi)+\mathrm{e}^{\mu\xi}\frac{1}{c}f(U(\xi)).
\end{equation*}	
Combining with \eqref{eq3.8}, we integrate the above equation from $-\infty$ to $\xi$ to get the equivalent form of the first equation of \eqref{eq1.17} with $U(+\infty)=1$, that is,
\begin{equation}	
	\label{eq3.10}	
	U(\xi)=\mathrm{e}^{-\mu\xi}\int_{-\infty}^{\xi}\mathrm{e}^{\mu\zeta}\mathscr{H}^\mu[U](\zeta)\mathrm{d}\zeta.
\end{equation}	

The purpose of obtaining the expression \eqref{eq3.10} is to use monotone iteration method developed in \cite{WZ2}, where iterative initial values are acted as the upper and lower solutions. According to \cite{WZ}, we define the upper and lower solutions of \eqref{eq1.17}. We call a pair of continuous functions $(\overline{U}(\xi), \underline{U}(\xi))$ upper and lower solutions of \eqref{eq1.17} if $\overline{U}'$ and $\underline{U}'$ exist on $\mathbb{R}\backslash Y$, and 
\begin{equation}
	\label{eq3.1}
	\left\{ \begin{aligned}		
		&c\overline{U}'(\xi)\geq \frac{1}{6}\mathcal{D}_h[\overline U](\xi)+f(\overline{U}(\xi)), && \xi\in\mathbb{R}\backslash Y,\\
		&c\underline{U}'(\xi)\leq \frac{1}{6}\mathcal{D}_h[\underline U](\xi)+f(\underline{U}(\xi)), && \xi\in\mathbb{R}\backslash Y,\\
		&0\leq\underline{U}(\xi)\leq\overline{U}(\xi)\leq1, && \xi\in\mathbb{R}\backslash Y,
	\end{aligned}\right.	
\end{equation}		
where $\overline{U}(\xi)$ is non-decreasing and $\underline{U}(\xi)\not\equiv0$ for $\xi\in\mathbb{R}\backslash Y$, $Y$ is a set with only a finite number of points.  Additionally, we provide a proposition that is useful for constructing the upper and lower solutions of equation \eqref{eq1.17}. The proof of this proposition is straightforward and is therefore omitted.
\begin{proposition}
	\label{pr3.1}
	Recall that $g(\lambda)$ and $c^*$ defined in \eqref{eq1.14}. Let $G(c,\lambda):=c\lambda-g(\lambda)$. Then there exists $\lambda^*>0$ such that $G(c^*,\lambda^*)=0$, and there exist $\lambda_1$ and $\lambda_2$ satisfying $0<\lambda_1\leq\lambda^*\leq\lambda_2$ such that the following statements hold:
	\begin{itemize}
		\item[$\mathrm{(i)}$] If $c>c^*$, then $G(c,\lambda_1)=G(c,\lambda_2)=0<G(c,\lambda^*)$, where $\lambda_1<\lambda^*<\lambda_2$.
		\item[$\mathrm{(ii)}$] If $c=c^*$, then $G(c,\lambda_1)=G(c,\lambda_2)=G(c,\lambda^*)=0$, where $\lambda_1=\lambda^*=\lambda_2$.
		\item[$\mathrm{(iii)}$] If $c<c^*$, then $G(c,\lambda)<0$, where $\lambda>0$.
	\end{itemize}	
\end{proposition}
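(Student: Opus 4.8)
The plan is to read the three cases directly off the geometry of the graph of $g$. Since $c^*=\inf_{\lambda>0}g(\lambda)/\lambda$ is the smallest slope of a ray through the origin that still meets the graph of $g$, the line $\lambda\mapsto c\lambda$ will lie strictly below, be tangent to, or cross the graph of $g$ according to whether $c<c^*$, $c=c^*$, or $c>c^*$, and $G(c,\lambda)=c\lambda-g(\lambda)$ merely records the signed vertical gap between line and graph. To make this precise I would first record the elementary properties of $g$. Rewriting \eqref{eq1.14} through the notation \eqref{eq3.3} gives $g(\lambda)=\frac{1}{3}\sum_{m=1}^{3}\cosh(\delta_m\lambda)-1+f'(0)$, so $g$ is smooth with $g(0)=f'(0)>0$, $g'(0)=0$, and $g''(\lambda)=\frac{1}{3}\sum_{m=1}^{3}\delta_m^2\cosh(\delta_m\lambda)>0$; the last inequality is strict because $(\kappa,\sigma)$ is a unit vector and hence the $\delta_m$ cannot all vanish. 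Thus $g$ is strictly convex, $g'$ increases strictly from $0$ to $+\infty$, and $g$ grows exponentially as $\lambda\to+\infty$.

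Next I would obtain $\lambda^*$ as a minimizer of $h(\lambda):=g(\lambda)/\lambda$. Because $g(0)>0$ one has $h(\lambda)\to+\infty$ as $\lambda\to0^+$, and because $g$ grows exponentially one has $h(\lambda)\to+\infty$ as $\lambda\to+\infty$; as $h$ is continuous and positive on $(0,+\infty)$ it attains its infimum $c^*$ at some interior point $\lambda^*>0$. The stationarity condition $h'(\lambda^*)=0$ reads $g'(\lambda^*)\lambda^*=g(\lambda^*)$, equivalently $g'(\lambda^*)=c^*$ and $G(c^*,\lambda^*)=c^*\lambda^*-g(\lambda^*)=0$, which is the asserted identity.

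It then remains to fix $c$ and analyze the strictly concave function $G(c,\cdot)$, which satisfies $G(c,0)=-f'(0)<0$ and $G(c,\lambda)\to-\infty$ as $\lambda\to+\infty$. For (iii), $c<c^*$ means $c<g(\lambda)/\lambda$ for every $\lambda>0$, that is $G(c,\lambda)<0$. For (ii), $c=c^*$, the maximum of $G(c^*,\cdot)$ is attained where $g'(\lambda)=c^*$, namely at $\lambda^*$, and equals $0$; strict concavity then forces $G(c^*,\lambda)<0$ for all $\lambda\neq\lambda^*$, so $\lambda^*$ is the unique zero and one takes $\lambda_1=\lambda^*=\lambda_2$. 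For (i), $c>c^*$, one has $G(c,\lambda^*)=(c-c^*)\lambda^*>0$, so a strictly concave function that is negative at $\lambda=0$, positive at $\lambda^*$, and tends to $-\infty$ must have exactly two zeros $\lambda_1<\lambda^*<\lambda_2$ straddling its maximum, with $G(c,\lambda^*)>0$ in between.

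I expect the only delicate point to be the endpoint behavior of $h$ that pins the minimizer at an interior $\lambda^*>0$ rather than allowing the infimum to escape to $0$ or to $+\infty$; this rests precisely on $g(0)=f'(0)>0$ together with the exponential growth of $g$, the latter guaranteed by the unit-vector constraint forcing some $\delta_m\neq0$. Everything else is the standard tangent-line dichotomy for a strictly convex $g$, combined with continuity and the intermediate value theorem.
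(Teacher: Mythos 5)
Your proof is correct and complete. The paper itself omits the argument (``The proof of this proposition is straightforward and is therefore omitted''), and what you supply is exactly the standard convexity reasoning the authors implicitly rely on: right after \eqref{eq1.14} they record only that $g$ is convex with $g(\lambda)\geq g(0)=f'(0)>0$, which is precisely your starting point. Your two supporting observations are also sound: writing $g(\lambda)=\frac{1}{3}\sum_{m=1}^{3}\cosh(\delta_m\lambda)-1+f'(0)$ is an accurate reformulation of \eqref{eq1.14} via \eqref{eq3.3}, and the unit-vector constraint indeed rules out $\delta_1=\delta_2=\delta_3=0$ (in fact $\sum_{m=1}^{3}\delta_m^2=\frac{3}{2}$), which gives the strict convexity and exponential growth that pin the minimizer $\lambda^*$ in the interior and drive the tangent-line trichotomy in cases (i)--(iii).
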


Now, we give a lemma about the existences of upper and lower solutions for \eqref{eq1.17}.
\begin{lemma}
	\label{le3.2}
	Assume \eqref{eq1.3} and \eqref{eq3.2} hold. For $c>c^*$, let
	\begin{equation*}
		\overline{U}(\xi):=\min\{1,\mathrm{e}^{\lambda_1\xi}\}, \ \underline{U}(\xi):=\max\left\{0,\left(1-M\mathrm{e}^{\gamma\xi}\right)\mathrm{e}^{\lambda_1\xi}\right\}, \ \xi\in\mathbb{R},	
	\end{equation*}
	where $M\geq \frac{N}{G(c,\lambda_1+\gamma)}$ and $0<\gamma<\min\{\lambda_1\theta, \lambda_2-\lambda_1\}$. Then $\overline{U}$ and $\underline{U}$ are the upper and lower solutions of \eqref{eq1.17}, respectively. 
\end{lemma}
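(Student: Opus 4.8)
The plan is to verify the four conditions in \eqref{eq3.1} directly, leaning on two facts about the linear part. First, a one-line computation shows that $\mathcal{D}_h$ acts on pure exponentials by $\mathcal{D}_h[\mathrm{e}^{\lambda\,\cdot}](\xi)=6\bigl(g(\lambda)-f'(0)\bigr)\mathrm{e}^{\lambda\xi}$, so that the full linear traveling-wave operator $\xi\mapsto c\,(\cdot)'-\frac{1}{6}\mathcal{D}_h[\cdot]-f'(0)(\cdot)$ sends $\mathrm{e}^{\lambda\xi}$ to $G(c,\lambda)\mathrm{e}^{\lambda\xi}$, with $g$ and $G$ as in \eqref{eq1.14} and Proposition~\ref{pr3.1}. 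Second, since $c>c^*$, Proposition~\ref{pr3.1}(i) supplies $G(c,\lambda_1)=0$ together with $G(c,\lambda_1+\gamma)>0$; the latter holds because $\lambda_1<\lambda_1+\gamma<\lambda_2$ by the choice $\gamma<\lambda_2-\lambda_1$, and it is what makes the bound on $M$ meaningful. The set $Y$ of non-smoothness is the finite set consisting of the breakpoints of $\overline U$ and $\underline U$.

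For the upper solution I would split at the kink of $\overline U=\min\{1,\mathrm{e}^{\lambda_1\xi}\}$. Where $\overline U\equiv1$ one has $\overline U'=0$ and $f(1)=0$, and since $\overline U\le 1$ pointwise the six shifted terms give $\mathcal{D}_h[\overline U](\xi)\le 6-6=0$, so the first inequality in \eqref{eq3.1} holds. Where $\overline U(\xi)=\mathrm{e}^{\lambda_1\xi}$, the only delicate point is that the shifts $\xi\pm\delta_m$ may cross the kink; this is absorbed by the global bound $\overline U(\eta)\le\mathrm{e}^{\lambda_1\eta}$ for all $\eta$, which yields $\mathcal{D}_h[\overline U](\xi)\le\mathcal{D}_h[\mathrm{e}^{\lambda_1\,\cdot}](\xi)$. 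Combining this with $f(s)\le f'(0)s$ from \eqref{eq1.3} reduces the required inequality to $c\lambda_1\ge g(\lambda_1)$, i.e. $G(c,\lambda_1)\ge 0$, which holds with equality.

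For the lower solution, write $\phi(\xi):=\mathrm{e}^{\lambda_1\xi}-M\mathrm{e}^{(\lambda_1+\gamma)\xi}$, so $\underline U=\max\{0,\phi\}$. Where $\underline U\equiv 0$, pointwise nonnegativity of $\underline U$ gives $\mathcal{D}_h[\underline U]\ge 0$ and the inequality is immediate. Where $\phi>0$, the global bound $\underline U\ge\phi$ and linearity give $\mathcal{D}_h[\underline U]\ge\mathcal{D}_h[\phi]=6(g(\lambda_1)-f'(0))\mathrm{e}^{\lambda_1\xi}-6M(g(\lambda_1+\gamma)-f'(0))\mathrm{e}^{(\lambda_1+\gamma)\xi}$; inserting the lower bound $f(s)\ge f'(0)s-Ns^{1+\theta}$ from \eqref{eq3.2} and simplifying, the inequality collapses (using $G(c,\lambda_1)=0$) to
\begin{equation*}
	N\,\phi(\xi)^{1+\theta}\le M\,G(c,\lambda_1+\gamma)\,\mathrm{e}^{(\lambda_1+\gamma)\xi}.
\end{equation*}

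This last estimate is where the parameter choices do their work, and I expect it to be the main obstacle. On $\{\phi>0\}$ one has $M\mathrm{e}^{\gamma\xi}<1$, hence (taking $M\ge 1$, which is harmless since enlarging $M$ only shrinks this set) $\xi\le 0$ and $0<\phi(\xi)\le\mathrm{e}^{\lambda_1\xi}$, so $\phi^{1+\theta}\le\mathrm{e}^{\lambda_1(1+\theta)\xi}$; because $\gamma<\lambda_1\theta$ the ratio $\mathrm{e}^{\lambda_1(1+\theta)\xi}/\mathrm{e}^{(\lambda_1+\gamma)\xi}=\mathrm{e}^{(\lambda_1\theta-\gamma)\xi}\le 1$ there, and the bound follows from $M\ge N/G(c,\lambda_1+\gamma)$. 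Finally I would record the order relations: $\overline U$ is non-decreasing, $\underline U\not\equiv 0$ since $\phi>0$ as $\xi\to-\infty$, and $0\le\underline U\le\overline U\le 1$ follows from $\phi\le\mathrm{e}^{\lambda_1\xi}$ together with $\phi\le 1$ on $\{\phi>0\}$, completing the verification of \eqref{eq3.1}.
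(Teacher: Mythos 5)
Your proposal is correct and follows essentially the same route as the paper: direct verification of the differential inequalities in \eqref{eq3.1} on the two pieces of each function, using $f(s)\le f'(0)s$ and $G(c,\lambda_1)=0$ for $\overline U$, and $f(s)\ge f'(0)s-Ns^{1+\theta}$ together with $G(c,\lambda_1)=0<G(c,\lambda_1+\gamma)$ (from Proposition~\ref{pr3.1}(i) and concavity of $G$ in $\lambda$) for $\underline U$. In fact you are slightly more careful than the paper on the two points it glosses over, namely that shifts $\xi\pm\delta_m$ crossing the kinks must be handled via the global bounds $\overline U(\eta)\le \mathrm{e}^{\lambda_1\eta}$ and $\underline U(\eta)\ge\phi(\eta)$ (the paper writes equalities where one-sided inequalities are what actually hold), and that the step $\mathrm{e}^{(\lambda_1\theta-\gamma)\xi}\le1$ requires the support of $\phi$ to lie in $\{\xi\le0\}$, i.e.\ the normalization $M\ge1$, which the paper uses implicitly.
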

\begin{proof}
	When $\xi\geq0$, we have $\overline{U}(\xi)=1$, and
	\begin{equation*}
		\overline{U}(\xi\pm\delta_m)=\min\left\{1,\mathrm{e}^{\lambda_1(\xi\pm\delta_m)}\right\}\left\{ \begin{aligned}		
			&=1,&&\xi\pm\delta_m\geq0,\\
			&\in(0,1],&&\xi\pm\delta_m\leq0,
		\end{aligned}\right.	
	\end{equation*}
	where $m=1,2,3$. It follows from \eqref{eq1.3} that
	\begin{equation*}
		\begin{aligned}		
			c\overline{U}'(\xi)-\frac{1}{6}\mathcal{D}_h[\overline{U}](\xi)-f(\overline{U}(\xi))
			=-\frac{1}{6}\left[\sum_{m=1}^{3}\left(\overline{U}(\xi-\delta_m)+\overline{U}(\xi+\delta_m)\right)-6\overline{U}(\xi)\right]
			\geq&0. 
		\end{aligned}	
	\end{equation*}
	
	When $\xi<0$, we have
	$\overline{U}(\xi)=\mathrm{e}^{\lambda_1\xi}$. By \eqref{eq3.2} and $G(c,\lambda_1)=0$, we obtain
	\begin{equation*}
		\begin{aligned}		
			&c\overline{U}'(\xi)-\frac{1}{6}\mathcal{D}_h[\overline{U}](\xi)-f(\overline{U}(\xi))\\
			\geq&c\overline{U}'(\xi)-\frac{1}{6}\mathcal{D}_h[\overline{U}](\xi)-f'(0)\mathrm{e}^{\lambda_1\xi}\\
			=&\lambda_1c\mathrm{e}^{\lambda_1\xi}-\frac{1}{6}\left[\sum_{m=1}^{3}\left(\mathrm{e}^{\lambda_1(\xi-\delta_m)}+\mathrm{e}^{\lambda_1(\xi+\delta_m)}\right)-6\mathrm{e}^{\lambda_1\xi}\right]-f'(0)\mathrm{e}^{\lambda_1\xi}\\
			=&G(c,\lambda_1)\mathrm{e}^{\lambda_1\xi}\\
			=&0. 
		\end{aligned}	
	\end{equation*}
	In summary,  $\overline{U}(\xi)=\min\{1,\mathrm{e}^{\lambda_1\xi}\}$ is an upper solution of \eqref{eq1.17}.
	
	When $\xi\geq-\frac{\ln M}{\gamma}$, we have $\underline{U}(\xi)=0$, and
	\begin{equation*}
		\underline{U}(\xi\pm\delta_m)=\max\left\{0,\left(1-M\mathrm{e}^{\gamma(\xi\pm\delta_m)}\right)\mathrm{e}^{\lambda_1(\xi\pm\delta_m)}\right\}\left\{ \begin{aligned}		
			&=0,&&\xi\pm\delta_m\geq-\frac{\ln M}{\gamma},\\
			&\in[0,1),&&\xi\pm\delta_m\leq-\frac{\ln M}{\gamma},
		\end{aligned}\right.
	\end{equation*}
	where $m=1,2,3$. Together with \eqref{eq1.3}, we obtain $c\underline{U}'(\xi)-\frac{1}{6}\mathcal{D}_h[\underline{U}](\xi)-f(\underline{U}(\xi))\leq0$. 
	
	When $\xi<-\frac{\ln M}{\gamma}$, we have
	$\underline{U}(\xi)=\left(1-M\mathrm{e}^{\gamma\xi}\right)\mathrm{e}^{\lambda_1\xi}$. It follows from (i) in Proposition~\ref{pr3.1}  that $G(c,\lambda_1)=0<G(c,\lambda_1+\gamma)$, which, combining \eqref{eq3.2} with $M\geq\frac{N}{G(c,\lambda_1+\gamma)}$, implies 
	\begin{equation*}
		\begin{aligned}		
			&c\underline{U}'(\xi)-\frac{1}{6}\mathcal{D}_h[\underline{U}](\xi)-f(\underline{U}(\xi))\\
			\leq&c\underline{U}'(\xi)-\frac{1}{6}\mathcal{D}_h[\underline{U}](\xi)-f'(0)\underline{U}(\xi)+N(\underline{U}(\xi))^{1+\theta}\\
			=&\left[c\lambda_1-\frac{1}{6}\sum_{m=1}^{3}\left(\mathrm{e}^{\lambda_1\delta_m}+\mathrm{e}^{-\lambda_1\delta_m}\right)+1-f'(0)\right]\mathrm{e}^{\lambda_1\xi}\\
			&+\left[-cM(\lambda_1+\gamma)+\frac{1}{6}M\sum_{m=1}^{3}\left(\mathrm{e}^{(\lambda_1+\gamma)\delta_m}+\mathrm{e}^{-(\lambda_1+\gamma)\delta_m}\right)-1+f'(0)M\right]\mathrm{e}^{(\lambda_1+\gamma)\xi}\\
			&+N\left(1-M\mathrm{e}^{\gamma\xi}\right)^{1+\theta}\mathrm{e}^{\lambda_1\xi(1+\theta)}\\
			\leq&G(c,\lambda_1)\mathrm{e}^{\lambda_1\xi}-MG(c,\lambda_1+\gamma)\mathrm{e}^{(\lambda_1+\gamma)\xi}+N\mathrm{e}^{(\lambda_1+\theta\lambda_1)\xi}\\
			\leq&-M G(c,\lambda_1+\gamma)\mathrm{e}^{(\lambda_1+\gamma)\xi}+N\mathrm{e}^{(\lambda_1+\gamma)\xi}\\
			\leq&0.	  
		\end{aligned}			
	\end{equation*}
	Thus, we conclude that $\underline{U}(\xi)=\max\{0,\left(1-M\mathrm{e}^{\gamma\xi}\right)\mathrm{e}^{\lambda_1\xi}\}$ is a lower solution of \eqref{eq1.17}. This completes the proof.		
\end{proof}

To prove the necessity, we present the following lemma.
\begin{lemma}
	\label{le3.3}
	Let $(c,U(\xi))$ be a solution of \eqref{eq1.17}. Then $\frac{U(\xi+\delta)}{U(\xi)}$ is uniformly bounded for $\xi\in\mathbb{R}$, $\delta\in[-1,1]$.
\end{lemma}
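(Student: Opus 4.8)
The plan is to first dispose of routine features of any solution $(c,U)$ of \eqref{eq1.17} and reduce the claim to the behaviour as $\xi\to-\infty$, and then to split the range $\delta\in[-1,1]$ into nonpositive and positive shifts. Two structural facts I would record at the outset. First, from the equation $U'=\tfrac1c\bigl(\tfrac16\mathcal{D}_h[U]+f(U)\bigr)$ together with $U\in[0,1]$ and the continuity of $f$, the derivative $U'$ is bounded, so $U$ is uniformly continuous. Second, since $\mathscr{H}^{\mu}[0]=0$ and $\mathscr{H}^{\mu}$ is order preserving (this is exactly the content of \eqref{eq3.9}), the integrand in the representation \eqref{eq3.10} is nonnegative; a strong-positivity argument then gives $U(\xi)>0$ for all $\xi$ (if $U(\xi_0)=0$ the nonnegative integrand vanishes on $(-\infty,\xi_0]$, forcing $U\equiv0$ there, and the positive shifts in $\mathcal{D}_h$ propagate the zero set rightward to all of $\mathbb{R}$, contradicting $U(+\infty)=1$). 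Consequently
\[
P(\xi):=\mathrm{e}^{\mu\xi}U(\xi)=\int_{-\infty}^{\xi}\mathrm{e}^{\mu\zeta}\mathscr{H}^{\mu}[U](\zeta)\,\mathrm{d}\zeta
\]
is nondecreasing. Because $U>0$ and $U\to1$ as $\xi\to+\infty$, the ratio $U(\xi+\delta)/U(\xi)$ is continuous and positive on $\mathbb{R}\times[-1,1]$ and tends to $1$ as $\xi\to+\infty$; hence it is bounded on $[-A,\infty)\times[-1,1]$ for every $A$, and the whole problem reduces to controlling it as $\xi\to-\infty$.

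The nonpositive shifts are then immediate from the monotonicity of $P$: for $\delta\in[-1,0]$ one has
\[
\frac{U(\xi+\delta)}{U(\xi)}=\mathrm{e}^{-\mu\delta}\,\frac{P(\xi+\delta)}{P(\xi)}\le \mathrm{e}^{-\mu\delta}\le \mathrm{e}^{\mu},
\]
and, reading this same inequality at the base point $\xi+\delta$ with shift $-\delta$, it also yields the lower bound $U(\xi+\delta)/U(\xi)\ge \mathrm{e}^{-\mu}$ for $\delta\in[0,1]$. Thus the only thing left—and the entire content of the lemma—is an \emph{upper} bound for positive shifts, equivalently the boundedness of $\Theta(\xi):=P(\xi+1)/P(\xi)$ as $\xi\to-\infty$.

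For this I would argue by contradiction through a blow-up/renormalisation scheme, using monotonicity of $P$ as the one piece of genuinely independent compactness. If $\Theta$ were unbounded there would be $\xi_n\to-\infty$ and $\delta_n\in(0,1]$ with $U(\xi_n+\delta_n)/U(\xi_n)\to\infty$. Because normalising by $U(\xi_n)$ only controls the profile to the \emph{left} of $\xi_n$ (via the nonpositive-shift bound just proved), I would instead renormalise by the local supremum, setting $S_n:=\sup_{[\xi_n-1,\xi_n+2]}U$ and $\widetilde W_n(\xi):=U(\xi_n+\xi)/S_n$, so that $0\le\widetilde W_n\le1$ on a fixed neighbourhood of $[0,1]$, $\widetilde W_n$ attains value near $1$ there, while $\widetilde W_n(0)=U(\xi_n)/S_n\to0$. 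The crucial observation is that $\xi\mapsto \mathrm{e}^{\mu\xi}\widetilde W_n(\xi)$ is a rescaling of $P(\xi_n+\cdot)$ and hence \emph{nondecreasing}; by Helly's selection theorem a subsequence converges pointwise to a nondecreasing limit, yielding a nontrivial limiting profile $\widetilde W_\infty\ge0$ with $\widetilde W_\infty(0)=0$ solving, in an appropriate sense, the linearised equation $c\widetilde W_\infty'=\tfrac16\mathcal{D}_h[\widetilde W_\infty]+f'(0)\widetilde W_\infty$ (since $U(\xi_n+\xi)\to0$ and $f(s)/s\to f'(0)$). One then derives a contradiction between the nontriviality forced by the normalisation and the degeneracy $\widetilde W_\infty(0)=0$ propagated by the equation and Proposition~\ref{pr3.1}.

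The main obstacle is precisely this limiting step, and it reflects the essential feature of a pulled Fisher–KPP front: the control provided by $P$ being nondecreasing is inherently one-sided, whereas $\mathcal{D}_h$ is a mixed (advance–delay) operator coupling each point to neighbours up to distance one on the \emph{uncontrolled} leading edge. The delicate parts are therefore (i) choosing the renormalisation window and, if needed, near-maximising translation points so that the rescaled profiles stay bounded on a full neighbourhood of $[0,1]$, and (ii) justifying passage to the limit in the nonlocal term across the leading edge and excluding the trivial limit—this is where the boundedness $U\le1$ and the monotonicity of $P$ must be combined. Once the two-sided local bound is secured, the pointwise Helly limit together with the strict positivity and Proposition~\ref{pr3.1} close the argument, establishing the uniform bound for all $\delta\in[-1,1]$.
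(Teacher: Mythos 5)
Your preliminary steps are fine and coincide with the paper's first move: the monotonicity of $P(\xi)=\mathrm{e}^{\mu\xi}U(\xi)$ (the paper uses $I(\xi)=U(\xi)\mathrm{e}^{\xi/c}$, the same device) settles all shifts $\delta\in[-1,0]$, positivity of $U$ follows from \eqref{eq3.10} as you say, and the problem is indeed only an upper bound for positive shifts as $\xi\to-\infty$. But that part — which you yourself call the entire content of the lemma — is not proved, and the blow-up scheme you outline cannot be closed in the form you describe. The obstruction is structural, not technical. Normalizing by $S_n=\sup_{[\xi_n-1,\,\xi_n+T]}U$, the monotonicity of $P$ bounds the backward values only, so $\widetilde W_n$ is locally uniformly bounded on $(-\infty,T]$ and nowhere beyond; consequently the limit equation can be justified (even in integrated form) only for $\zeta\le T-\delta_{\max}$, where $\delta_{\max}=\max_m|\delta_m|\le1$, with strict inequality $\delta_{\max}<1$ for every $\alpha$ not a multiple of $\pi/3$ (e.g.\ $\delta_{\max}=\sqrt3/2$ at $\alpha=\pi/2$). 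Propagating the zero set of $\widetilde W_\infty$ forward through this equation (vanishing on $(-\infty,a]$ forces $\widetilde W_\infty(\zeta+\delta_m)=0$ for a.e.\ $\zeta\le\min(a,T-\delta_{\max})$) extends the zero set by at most $\delta_{\max}$ per step and only while one stays where the equation holds, so it reaches at most $(-\infty,T)$; meanwhile the guaranteed nontriviality sits exactly at the edge point $T$, since $\widetilde W_n(T)\ge\mathrm{e}^{-\mu(T+1)}$, and in the expected scenario where the maximizing points $\eta_n$ converge to $T$ the Helly limit can perfectly well be $\widetilde W_\infty\equiv0$ on $(-\infty,T)$ with a jump at $T$. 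Pointwise convergence of monotone rescalings cannot exclude this concentration at the edge, and any equicontinuity estimate near $T$ would require bounding $\widetilde W_n$ on $(T,T+1]$ — i.e.\ exactly the forward Harnack bound being proved. So the constraints you can extract in the limit are mutually consistent, no contradiction is available, and the appeal to Proposition~\ref{pr3.1} (which concerns the roots of $G(c,\lambda)$) supplies nothing at this point.

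The paper avoids compactness altogether and gets the forward bound directly from the integrated equation. Integrating \eqref{eq1.17} from $-\infty$ to $\xi$ and dropping $f\ge0$ gives $cU(\xi)\ge\frac16\int_{-\infty}^{\xi}\mathcal{D}_h[U](\zeta)\,\mathrm{d}\zeta$, and the key observation is that this integral telescopes into boundary terms: for each $m$,
\begin{equation*}
\int_{-\infty}^{\xi}\bigl(U(\zeta+|\delta_m|)+U(\zeta-|\delta_m|)-2U(\zeta)\bigr)\mathrm{d}\zeta
=\int_{\xi-|\delta_m|}^{\xi}\bigl(U(\zeta+|\delta_m|)-U(\zeta)\bigr)\mathrm{d}\zeta .
\end{equation*}
Keeping the single forward term $\int_{\xi-|\delta_3|/2}^{\xi}U(\zeta+|\delta_3|)\,\mathrm{d}\zeta\ge\frac{|\delta_3|}{2}\mathrm{e}^{-|\delta_3|/(2c)}U(\xi+\frac{|\delta_3|}{2})$ and bounding all backward terms by $U(\xi)$ via the estimate \eqref{eq3.4} (your Step~1), one finds that the finite left-hand side $cU(\xi)$ dominates the forward-shift term, which yields a uniform bound on $U(\xi+\frac{|\delta_3|}{2})/U(\xi)$ in one stroke; composing finitely many such ratios and using \eqref{eq3.4} finishes the lemma. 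In short, the forward control you try to manufacture by compactness is obtained in the paper for free from the global integral identity, and without some substitute for that mechanism your renormalization argument remains circular at its decisive step.
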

\begin{proof}
	Let $I(\xi):=U(\xi)\mathrm{e}^{\frac{\xi}{c}}$, $\xi\in\mathbb{R}$.  It follows from the first formula of \eqref{eq1.17} that
	\begin{equation*}
		\begin{aligned}		
			0=cU'(\xi)+U(\xi)-\frac{1}{6}\sum_{m=1}^{3}\left(U(\xi+\delta_m)+U(\xi-\delta_m)\right)-f(U(\xi))\leq c\left(U'(\xi)+\frac{1}{c}U(\xi)\right),	
		\end{aligned}				
	\end{equation*}
	which implies $I'(\xi)=\left(U'(\xi)+\frac{1}{c} U(\xi)\right)\mathrm{e}^{\frac{\xi}{c}}\geq0$. Thus, we have $I(\xi-\delta)\leq I(\xi)$ for $\delta\geq0$. That is, $U(\xi-\delta)\mathrm{e}^{\frac{\xi-\delta}{c}}\leq U(\xi)\mathrm{e}^{\frac{\xi}{c}}$. Therefore we get
	\begin{equation}
		\label{eq3.4}
		\frac{U(\xi-\delta)}{U(\xi)}\leq\mathrm{e}^{\frac{\delta}{c}}, \ \xi\in\mathbb{R}, \ \delta\geq0.
	\end{equation}
	
	For $\delta\geq0$, if there exists $\zeta$ such that $\xi-\frac{\delta}{2}\leq\zeta\leq\xi$, then, applying \eqref{eq3.4}, we have 
	\begin{equation}
		\label{eq3.5}
		U(\zeta+\delta)\mathrm{e}^{\frac{\delta}{2c}}\geq U(\zeta+\delta)\mathrm{e}^{\frac{1}{c}(\frac{\delta}{2}+\zeta-\xi)}\geq U(\zeta+\delta-(\frac{\delta}{2}+\zeta-\xi))=U(\xi+\frac{\delta}{2}).
	\end{equation}
	Moreover, if there exists $\zeta$ such that $\xi-\delta\leq\zeta\leq\xi$, then, by $I(\zeta)\leq I(\xi)$, we obtain
	\begin{equation}
		\label{eq3.6}	
		U(\zeta)\leq U(\xi)\mathrm{e}^{\frac{\xi-\zeta}{c}}\leq U(\xi)\mathrm{e}^{\frac{\delta}{c}}.
	\end{equation}
	Integrating the first equation of \eqref{eq1.17} from $-\infty$ to $\xi$, and using \eqref{eq3.5} and \eqref{eq3.6}, we have
	\begin{equation*}
		\begin{aligned}		
			cU(\xi)=&\int_{-\infty}^{\xi}\frac{1}{6}\mathcal{D}_h[U](\zeta)\mathrm{d}\zeta+\int_{-\infty}^{\xi}f(U(\zeta))\mathrm{d}\zeta\\
			\geq&\frac{1}{6}\int_{-\infty}^{\xi}\sum_{m=1}^{3}\left(U(\zeta+\delta_m)+U(\zeta-\delta_m)-6U(\zeta)\right)\mathrm{d}\zeta\\
			=&\frac{1}{6}\int_{\xi-|\delta_1|}^{\xi}\left(U(\zeta+|\delta_1|)-U(\zeta)\right)\mathrm{d}\zeta+\frac{1}{6}\int_{\xi-|\delta_2|}^{\xi}\left(U(\zeta+|\delta_2|)-U(\zeta)\right)\mathrm{d}\zeta\\
			&+\frac{1}{6}\int_{\xi-|\delta_3|}^{\xi}\left(U(\zeta+|\delta_3|)-U(\zeta)\right)\mathrm{d}\zeta\\
			\geq&\frac{1}{6}\left[\int_{\xi-\frac{|\delta_3|}{2}}^{\xi}U(\zeta+|\delta_3|)\mathrm{d}\zeta-\int_{\xi-|\delta_1|}^{\xi}U(\zeta)\mathrm{d}\zeta-\int_{\xi-|\delta_2|}^{\xi}U(\zeta)\mathrm{d}\zeta-\int_{\xi-|\delta_3|}^{\xi}U(\zeta)\mathrm{d}\zeta\right]\\
			\geq&\frac{1}{6}\left[\frac{|\delta_3|}{2}\mathrm{e}^{-\frac{|\delta_3|}{2c}}U(\xi+\frac{|\delta_3|}{2})-U(\xi)\left(|\delta_1|\mathrm{e}^{\frac{|\delta_1|}{c}}+|\delta_2|\mathrm{e}^{\frac{|\delta_2|}{c}}+|\delta_3|\mathrm{e}^{\frac{|\delta_3|}{c}}\right)\right],
		\end{aligned}		
	\end{equation*}
	which implies
	\begin{equation*}	
		\frac{U(\xi+\frac{|\delta_3|}{2})}{U(\xi)}\leq\frac{6c+|\delta_1|\mathrm{e}^{\frac{|\delta_1|}{c}}+|\delta_2|\mathrm{e}^{\frac{|\delta_2|}{c}}+|\delta_3|\mathrm{e}^{\frac{|\delta_3|}{c}}}
		{\frac{|\delta_3|}{2}\mathrm{e}^{-\frac{|\delta_3|}{2c}}}, \ \xi\in\mathbb{R}.
	\end{equation*}
	Subsequently, we can conclude from the above equation that $\frac{U(\xi+|\delta_3|)}{U(\xi)}$ is bounded for $\xi\in\mathbb{R}$. Thus, for $\delta\geq0$, we can show that  $\frac{U(\xi+\delta)}{U(\xi)}$ is uniformly bounded for $\xi\in\mathbb{R}$, which, together with \eqref{eq3.4}, implies that this lemma holds.	
\end{proof}

\subsection{The proof of Theorem~\ref{th1.2}}\label{se22} 

\begin{proof}	
	\textbf{The sufficiency}. 
	Based on the above preparation, we prove the sufficiency in case $c> c^*$ and case $c= c^*$.
	
	\emph{Case} $c> c^*$. From Lemma~\ref{le3.2}, we see that \eqref{eq1.17} admits a lower solution $\underline{U}(\xi)$ and an upper solution $\overline{U}(\xi)$. Define
	\begin{equation}	
		\label{eq3.12}	
		U_{n+1}(\xi):= \mathrm{e}^{-\mu\xi}\int_{-\infty}^{\xi}\mathrm{e}^{\mu\zeta}\mathscr{H}^\mu[U_{n}](\zeta)\mathrm{d}\zeta,\ n=1,2,3,\cdots,
	\end{equation}
	where
	\begin{equation}	
		\label{eq3.11}	
		U_1(\xi):=\mathrm{e}^{-\mu\xi}\int_{-\infty}^{\xi}\mathrm{e}^{\mu\zeta}\mathscr{H}^\mu[\overline{U}](\zeta)\mathrm{d}\zeta.
	\end{equation}
	By \eqref{eq3.10}, \eqref{eq3.1} and \eqref{eq3.11}, it follows that $\overline{U}(\xi)\geq\mathrm{e}^{-\mu\xi}\int_{-\infty}^{\xi}\mathrm{e}^{\mu\zeta}\mathscr{H}^\mu[\overline{U}](\zeta)\mathrm{d}\zeta=U_1(\xi)$. Since $\underline{U}(\xi)\leq\overline{U}(\xi)$, we obtain from $\eqref{eq3.9}$ that $\mathscr{H}^\mu[\underline{U}](\xi)\leq\mathscr{H}^\mu[\overline{U}](\xi)$. For $\underline{U}(\xi)$, we see from $\eqref{eq3.9}$ and $\eqref{eq3.10}$ that 
	\begin{equation*}	
		\begin{aligned}		
			\mathrm{e}^{\mu\xi}\underline{U}(\xi)\leq\int_{-\infty}^{\xi}\mathrm{e}^{\mu\zeta}\mathscr{H}^\mu[\underline{U}](\zeta)\mathrm{d}\zeta\leq\int_{-\infty}^{\xi}\mathrm{e}^{\mu\zeta}\mathscr{H}^\mu[\overline{U}](\zeta)\mathrm{d}\zeta=\mathrm{e}^{\mu\xi}U_1(\xi),
		\end{aligned}	
	\end{equation*}
	which means $U_1(\xi)\geq\underline{U}(\xi)$. Hence, we have $\underline{U}(\xi)\leq U_1(\xi)\leq\overline{U}(\xi)$, $\xi\in\mathbb{R}$. Moreover, taking the derivative of \eqref{eq3.11}, we get	
	\begin{equation*}	
		\begin{aligned}		
			U_1'(\xi)=&-\mu \mathrm{e}^{-\mu\xi}\int_{-\infty}^{\xi}\mathrm{e}^{\mu \zeta}\mathscr{H}^\mu[\overline{U}](\zeta)\mathrm{d}\zeta+\mathrm{e}^{-\mu\xi}\mathscr{H}^\mu[\overline{U}](\xi)\mathrm{e}^{\mu\xi}\\
			=&-\mu \mathrm{e}^{-\mu\xi}\int_{-\infty}^{\xi}\mathrm{e}^{\mu \zeta}\mathscr{H}^\mu[\overline{U}](\zeta)\mathrm{d}\zeta+\mu\mathrm{e}^{-\mu\xi}\int_{-\infty}^{\xi}\mathrm{e}^{\mu \zeta}\mathscr{H}^\mu[\overline{U}](\xi)\mathrm{d}\zeta\\
			=&\mu \mathrm{e}^{-\mu\xi}\int_{-\infty}^{\xi}\mathrm{e}^{\mu \zeta}\left(\mathscr{H}^\mu[\overline{U}](\xi)-\mathscr{H}^\mu[\overline{U}](\zeta)\right)\mathrm{d}\zeta\\
			\geq&0. 		
		\end{aligned}	
	\end{equation*}
	Similarly, we conclude that $0\leq \underline{U}(\xi)\leq U_{n+1}(\xi)\leq U_{n}(\xi)\leq \overline{U}(\xi)\leq1$ and $U_n'(\xi)\geq0$, $\xi\in\mathbb{R}$, $n=1,2,3,\cdots$. Consequently, the limit of $U_n(\xi)$ exists when $n\rightarrow+\infty$. Let $\lim_{n\rightarrow+\infty}U_n(\xi):=U(\xi)$. Then $U(\xi)$ is non-decreasing in $\xi$ over $\mathbb{R}$. By Lebesgue's dominated convergence theorem and letting $n\rightarrow+\infty$ in \eqref{eq3.12}, we see that \eqref{eq3.12} becomes \eqref{eq3.10}, which implies that $U(\xi)$ satisfies \eqref{eq1.17}.	
	
	Subsequently, we prove that $U(-\infty)=0$ and $U(+\infty)=1$. The existence of $U(\pm\infty)$ can be demonstrated by $U'(\xi)\geq0$ and $0\leq U(\xi)\leq1$. It follows from \eqref{eq3.10} that $\overline{U}(-\infty)=0$, which, together with $0\leq U(\xi)\leq\overline{U}(\xi)$, means $U(-\infty)=0$. From \eqref{eq3.10} and L'Hospital's rule, we conclude that
	\begin{equation*}	
		\begin{aligned}		
			\lim_{\xi\rightarrow+\infty}U(\xi)=&\lim_{\xi\rightarrow+\infty}\frac{\int_{-\infty}^{\xi}\mathrm{e}^{\mu \zeta}\mathscr{H}^\mu[U](\zeta)\mathrm{d}\zeta}{\mathrm{e}^{\mu\xi}}\\
			=&\lim_{\xi\rightarrow+\infty}\frac{\mu U(\xi)+\frac{1}{6c}\mathcal{D}_h[U](\xi)+\frac{1}{c}f(U(\xi))}{\mu}.
		\end{aligned}	
	\end{equation*}
	Thus, we have $\mathcal{D}_h[U](+\infty)=0$ and $f(U(+\infty))=0$, which imply that $U(+\infty)=0$ or $1$. Since $U'(\xi)\geq0$ and $\underline{U}(\xi)\not\equiv0$, there exists $\xi^*\in\mathbb{R}$ such that $U(+\infty)\geq U(\xi^*)\geq \underline{U}(\xi^*)>0$. Therefore we get $U(+\infty)=1$.
	
	\emph{Case} $c=c^*$. We choose a sequence of solutions $\{c^n, U^n(\xi)\}_{n=1}^{+\infty}$ of \eqref{eq1.17} such that $(U^n(\xi))'\geq0$ and $c^n\rightarrow c^*$ as $n\rightarrow+\infty$. Obviously, $\{U^n(\xi)\}$ is equicontinuous and uniformly bounded on $\mathbb{R}$. From the Arzela-Ascoli theorem, we see that there exists a subsequence $\{U^{n_k}(\xi)\}_{k=1}^{+\infty}$ of $\{U^{n}(\xi)\}_{n=1}^{+\infty}$ and $U^*(\xi)\in[0,1]$ such that $\lim_{k\rightarrow+\infty}U^{n_k}(\xi)=U^*(\xi)$ uniformly on any compact subset of $\mathbb{R}$. Since $(c^{n_k}, U^{n_k}(\xi))$ is a solution of \eqref{eq1.17}, by taking $k\rightarrow+\infty$, we obtain that $U^*(\xi)$ is also a solution of \eqref{eq1.17} and $(U^*(\xi))'\geq0$ for all $\xi\in\mathbb{R}$.
	
	Subsequently, we prove that $U^*(-\infty)=0$ and $U^*(+\infty)=1$. Note that $\int_{-\infty}^{+\infty}\mathcal{D}_h[U](\xi)\mathrm{d}\xi$ $=0$. By integrating the first equation of \eqref{eq1.17} with solution $(c^{n_k}, U^{n_k})$ from $-\infty$ to $+\infty$, we obtain
	\begin{equation}
		\label{eq3.14}	
		c^{n_k}=\int_{-\infty}^{+\infty}f(U^{n_k}(\xi))\mathrm{d}\xi.
	\end{equation}
	Here, by translating $U^{n_k}(\xi)$, we can suppose $U^{n_k}(0)=\frac{1}{2}$ for any $k$. Since $(U^*(\xi))'\geq0$ and $0\leq U^*(\xi)\leq1$, then $U^*(\pm\infty)$ exists and $0\leq U^*(\pm\infty)\leq1$. For \eqref{eq3.14}, taking $k\rightarrow+\infty$ and applying Fatous' lemma, we have 
	\begin{equation}
		\label{eq3.15}
		\int_{-\infty}^{+\infty}f(U^{*}(\xi))\mathrm{d}\xi=\int_{-\infty}^{+\infty}\liminf_{k\rightarrow+\infty}f(U^{n_k}(\xi))\mathrm{d}\xi\leq\liminf_{k\rightarrow+\infty}\int_{-\infty}^{+\infty}f(U^{n_k}(\xi))\mathrm{d}\xi=c^*.
	\end{equation}
	Obviously, $f(U^*(\pm\infty))\geq0$. If $f(U^*(\pm\infty))>0$, then $\int_{-\infty}^{+\infty}f(U^{*}(\pm\infty))\mathrm{d}\xi=+\infty$, which contradicts with \eqref{eq3.15}. Thus, we have $f(U^*(\pm\infty))=0$, and therefore $U^*(\pm\infty)=0$ or $1$. Since $U^*(0)=\frac{1}{2}$, it follows that $U^*(-\infty)=0$ and $U^*(+\infty)=1$.
	
	\textbf{The necessity}. By Lemma \ref{le3.3}, we prove the necessity. Assume that $(c,U(\xi))$ is a solution of \eqref{eq1.17}. Since $U(\xi)\rightarrow0$ as $\xi\rightarrow-\infty$, we have $\lim_{\xi\rightarrow-\infty}\frac{f(U(\xi))}{U(\xi)}=f'(0)$. That is, for any $\varepsilon>0$, there exists $\xi_1<0$ such that
	\begin{equation}
		\label{eq3.16}
		\frac{f(U(\xi))}{U(\xi)}\geq f'(0)-\varepsilon, \ \xi<\xi_1.
	\end{equation}
	
	Now, we establish a key inequality. Recall that $\delta_m$ $(m=1,2,3)$ defined in \eqref{eq3.3}. Without loss of generality, let $\delta_m\geq0$ and $\xi_1<\xi_2$, for any $\xi\in[\xi_1,\xi_2]$, applying Jensen's inequality, we obtain
	\begin{equation}
		\label{eq3.17}
		\begin{aligned}	
			\frac{1}{\xi_2-\xi_1}\int_{\xi_1}^{\xi_2}\frac{U(\xi+\delta_m)}{U(\xi)}\mathrm{d}\xi=&\frac{1}{\xi_2-\xi_1}\int_{\xi_1}^{\xi_2}\mathrm{e}^{\ln U(\xi+\delta_m)-\ln U(\xi)}\mathrm{d}\xi\\
			\geq&\mathrm{e}^{\frac{1}{\xi_2-\xi_1}\int_{\xi_1}^{\xi_2}\left(\ln U(\xi+\delta_m)-\ln U(\xi)\right)\mathrm{d}\xi}\\
			=&\mathrm{e}^{\frac{1}{\xi_2-\xi_1}\left(\int_{\xi_1+\delta_m}^{\xi_2+\delta_m}\ln U(\xi)\mathrm{d}\xi-\int_{\xi_1}^{\xi_2}\ln U(\xi)\mathrm{d}\xi\right)}\\
			=&\mathrm{e}^{\frac{1}{\xi_2-\xi_1}\int_{\xi_1+\delta_m}^{\xi_1}\ln U(\xi)\mathrm{d}\xi+\frac{1}{\xi_2-\xi_1}\int_{\xi_2}^{\xi_2+\delta_m}\ln U(\xi)\mathrm{d}\xi}\\
			=&\mathrm{e}^{\frac{\delta_m\left(\ln U(\xi_2)-\ln U(\xi_1)\right)}{\xi_2-\xi_1}+\frac{1}{\xi_2-\xi_1}\left(\int_{\xi_1+\delta_m}^{\xi_1}\ln\frac{U(\xi)}{U(\xi_1)}\mathrm{d}\xi+\int_{\xi_2}^{\xi_2+\delta_m}\ln \frac{U(\xi)}{U(\xi_2)}\mathrm{d}\xi\right)}.
		\end{aligned}		
	\end{equation}
	Similarly, for $\delta_m\geq0$, we have
	\begin{equation}
		\label{eq3.18}
		\begin{aligned}	
			\frac{1}{\xi_2-\xi_1}\int_{\xi_1}^{\xi_2}\frac{U(\xi-\delta_m)}{U(\xi)}\mathrm{d}\xi\geq\mathrm{e}^{\frac{\delta_m\left(\ln U(\xi_1)-\ln U(\xi_2)\right)}{\xi_2-\xi_1}+\frac{1}{\xi_2-\xi_1}\left(\int_{\xi_1-\delta_m}^{\xi_1}\ln\frac{U(\xi)}{U(\xi_1)}\mathrm{d}\xi+\int_{\xi_2}^{\xi_2-\delta_m}\ln \frac{U(\xi)}{U(\xi_2)}\mathrm{d}\xi\right)}.
		\end{aligned}		
	\end{equation}
	For the sake of further calculations, we set
	\begin{equation}
		\label{eq3.20}
		\begin{aligned}	
			&\lambda(\xi_1,\xi_2):=\frac{\ln U(\xi_2)-\ln U(\xi_1)}{\xi_2-\xi_1},\\
			&\varepsilon(\delta_m):=\frac{1}{\xi_2-\xi_1}\left(\int_{\xi_2}^{\xi_2+\delta_m}\ln\frac{U(\xi)}{U(\xi_2)}\mathrm{d}\xi-\int_{\xi_1}^{\xi_1+\delta_m}\ln \frac{U(\xi)}{U(\xi_1)}\mathrm{d}\xi\right),\ m=1,2,3,\\
			&\varepsilon(-\delta_m):=\frac{1}{\xi_2-\xi_1}\left(\int_{\xi_1-\delta_m}^{\xi_1}\ln\frac{U(\xi)}{U(\xi_1)}\mathrm{d}\xi-\int_{\xi_2-\delta_m}^{\xi_2}\ln\frac{U(\xi)}{U(\xi_2)}\mathrm{d}\xi\right), \ m=1,2,3.
		\end{aligned}	
	\end{equation}
	
	Next, we prove $c\geq c^*$. From the first equation of \eqref{eq1.17}, we obtain
	\begin{equation*}
		c\int_{\xi_1}^{\xi_2}\frac{U'(\xi)}{U(\xi)}\mathrm{d}\xi=\frac{1}{6}\int_{\xi_1}^{\xi_2}\frac{\mathcal{D}_h[U](\xi)}{U(\xi)}\mathrm{d}\xi+\int_{\xi_1}^{\xi_2}\frac{f(U(\xi))}{U(\xi)}\mathrm{d}\xi,
	\end{equation*}
	which, combining with \eqref{eq3.16}, implies  
	\begin{equation}
		\label{eq3.19}
		c(\ln U(\xi_2)-\ln U(\xi_1))\geq \frac{1}{6}\int_{\xi_1}^{\xi_2}\frac{\mathcal{D}_h[U](\xi)}{U(\xi)}\mathrm{d}\xi+(f'(0)-\varepsilon)(\xi_2-\xi_1).
	\end{equation}
	By \eqref{eq3.17}, \eqref{eq3.18} and \eqref{eq3.20}, it follows that \eqref{eq3.19} becomes
	\begin{equation}
		\label{eq3.21}
		\begin{aligned}	
			c\lambda(\xi_1,\xi_2)\geq& \frac{1}{6(\xi_2-\xi_1)}\int_{\xi_1}^{\xi_2}\frac{\mathcal{D}_h[U](\xi)}{U(\xi)}\mathrm{d}\xi+f'(0)-\varepsilon\\
			=&\frac{1}{6(\xi_2-\xi_1)}\sum_{m=1}^{3}\left(\int_{\xi_1}^{\xi_2}\ln\frac{U(\xi+\delta_m)}{U(\xi)}\mathrm{d}\xi+\int_{\xi_1}^{\xi_2}\ln\frac{U(\xi-\delta_m)}{U(\xi)}\mathrm{d}\xi\right)-1+f'(0)-\varepsilon\\ 
			\geq&\frac{1}{6}\sum_{m=1}^{3}\left(\mathrm{e}^{\lambda(\xi_1,\xi_2)\delta_m+\varepsilon(\delta_m)}+\mathrm{e}^{-\lambda(\xi_1,\xi_2)\delta_m+\varepsilon(-\delta_m)}\right)-1+f'(0)-\varepsilon.
		\end{aligned}		
	\end{equation}
	From Lemma \ref{le3.3}, we conclude that $(\xi_2-\xi_1)\varepsilon(\pm\delta_m)$ is bounded with respect to $\xi_2$ for $m=1,2,3$. Hence, we can choose a sufficiently small $\xi_1$ such that $\xi_2-\xi_1\rightarrow+\infty$ and $U(\xi_1)<U(\xi_2)$, which implies that $|\varepsilon(\pm\delta_m)|<\varepsilon$ for $m=1,2,3$ and $\lambda(\xi_1,\xi_2)>0$. By \eqref{eq3.21}, it follows that
	\begin{equation*}
		\begin{aligned}	
			c\geq&\frac{1}{\lambda(\xi_1,\xi_2)}\left[\frac{1}{6}\sum_{m=1}^{3}\left(\mathrm{e}^{\lambda(\xi_1,\xi_2)\delta_m-\varepsilon}+\mathrm{e}^{-\lambda(\xi_1,\xi_2)\delta_m-\varepsilon}\right)-1+f'(0)-\varepsilon\right]\\
			\geq&\inf_{\lambda>0}\frac{1}{\lambda}\left[\frac{1}{6}\sum_{m=1}^{3}\left(\mathrm{e}^{\lambda\delta_m-\varepsilon}+\mathrm{e}^{-\lambda\delta_m-\varepsilon}\right)-1+f'(0)-\varepsilon\right].
		\end{aligned}		
	\end{equation*}	
	Moreover, taking $\varepsilon\rightarrow0$ in the above inequality, we have $c\geq c^*$. The proof is completed.
\end{proof}

\section{The uniqueness of traveling waves}\label{se3} 

In this section we use methods in \cite{CC2, CG2, DK, GW2} to obtain a set of lemmas, proving the uniqueness of traveling waves of \eqref{eq1.17}, i.e., Theorem \ref{th1.4}. Throughout this section, we always assume that $c\geq c^*$. 

\subsection{Preliminaries}\label{se31}
According to \cite{CC2, DK}, Ikehara's theorem (see \cite[Theorem 2.12]{EE}) is an important technique in proving the uniqueness, and the following lemma is a prerequisite for the application of Ikehara's theorem.
\begin{lemma}
	\label{le3.4}
	Let $(c,U(\xi))$ be a solution of \eqref{eq1.17}. Then there exists $\omega>0$ such that $U(\xi)=O(\mathrm{e}^{\omega\xi})$ as $\xi\rightarrow-\infty$.	
\end{lemma}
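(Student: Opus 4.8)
The plan is to show that $U$ decays at least exponentially at $-\infty$ by comparing it with a pure exponential $e^{\omega\xi}$, where the admissible exponent $\omega$ is read off from the characteristic function $g$ in \eqref{eq1.14}. The starting point is the Fisher-KPP bound $f(s)\le f'(0)s$ from \eqref{eq1.3}, which turns the first equation of \eqref{eq1.17} into
\[
cU'(\xi)\le \tfrac{1}{6}\mathcal{D}_h[U](\xi)+f'(0)U(\xi),
\]
so that $U$ is a subsolution of the linearized wave equation. A direct substitution shows that $e^{\omega\xi}$ solves this linearized equation exactly when $c\omega=g(\omega)$, i.e. $G(c,\omega)=0$, and is a supersolution whenever $G(c,\omega)\ge0$. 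By Proposition \ref{pr3.1}, for $c>c^*$ every $\omega\in[\lambda_1,\lambda_2]$ satisfies $G(c,\omega)\ge0$, so for the statement I only need to fix one such exponent $\omega\in(0,\lambda_2)$.

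To make the comparison quantitative I would work with the integral representation \eqref{eq3.10}. Inserting $f(s)\le f'(0)s$ into $\mathscr{H}^\mu$ gives $\mathscr{H}^\mu[U]\le \mu U+\tfrac{1}{6c}\mathcal{D}_h[U]+\tfrac{f'(0)}{c}U$, and a short computation using the definition of $g$ shows that the pure exponential $e^{\omega\xi}$ is an eigenfunction of the associated linear integral operator,
\[
e^{-\mu\xi}\int_{-\infty}^{\xi}e^{\mu\zeta}\Big(\mu e^{\omega\zeta}+\tfrac{1}{6c}\mathcal{D}_h[e^{\omega\cdot}](\zeta)+\tfrac{f'(0)}{c}e^{\omega\zeta}\Big)\mathrm{d}\zeta=\rho(\omega)\,e^{\omega\xi},\qquad \rho(\omega)=\frac{c\mu+g(\omega)}{c(\mu+\omega)}.
\]
Here $\rho(\omega)<1$ is equivalent to $g(\omega)<c\omega$, that is $G(c,\omega)>0$, so Proposition \ref{pr3.1} supplies a strictly contractive exponent $\omega\in(\lambda_1,\lambda_2)$ for every $c>c^*$. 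With such a contraction at hand, the fixed-point identity \eqref{eq3.10} together with $\rho(\omega)<1$ should force $U(\xi)\le Ce^{\omega\xi}$, which is the claimed bound $U=O(e^{\omega\xi})$.

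The main obstacle is that $\mathcal{D}_h$ contains the forward shifts $U(\cdot+\delta_m)$, so the operator in \eqref{eq3.10} is \emph{not} of Volterra type, and the comparison cannot be closed by a naive Gronwall argument issued from $-\infty$; moreover, a priori $U$ might decay sub-exponentially, which is exactly what must be excluded and the reason the eigenvalue computation alone is inconclusive. This is where Lemma \ref{le3.3} is decisive: the uniform ratio bound $U(\xi+\delta)\le KU(\xi)$ for $\delta\in[-1,1]$ lets me dominate the forward-shifted values $U(\zeta+\delta_m)$ by $U(\zeta)$, reducing the estimate to a genuine Volterra inequality, while the integrability $\int_{-\infty}^{0}U\,\mathrm{d}\xi<\infty$ — which follows from $c=\int_{-\infty}^{+\infty}f(U)\,\mathrm{d}\xi<\infty$ (derived in the necessity argument) together with $f(s)\ge(f'(0)-\varepsilon)s$ near $s=0$ — provides the finiteness needed to anchor the iteration and rule out sub-exponential behavior.

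Finally, the degenerate case $c=c^*$ needs separate care: there $\lambda_1=\lambda^*=\lambda_2$ is a double root of $G(c^*,\cdot)$, whence $\rho(\omega)\ge1$ for all $\omega$ and no strict contraction exists. In this case I would replace the pure exponential by a barrier of the form $(a-\xi)e^{\lambda^*\xi}$ adapted to the double root, whose supersolution property follows from $\partial_\lambda G(c^*,\lambda^*)=c^*-g'(\lambda^*)=0$ and the convexity $g''>0$. Since $(a-\xi)e^{\lambda^*\xi}=O(e^{\omega\xi})$ for every $\omega\in(0,\lambda^*)$, the resulting bound is still of the required form and is enough both for the statement and for the subsequent application of Ikehara's theorem.
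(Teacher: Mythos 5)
There is a genuine gap, and it sits exactly where you write ``should force $U(\xi)\le Ce^{\omega\xi}$''. First, for $c>c^*$ your target estimate is not just unproven but \emph{false}: the paper's Lemma \ref{le3.42} (see \eqref{eq3.295}) shows that every solution with $c>c^*$ satisfies $U(\xi+\vartheta_1)\mathrm{e}^{-\lambda_1\xi}\rightarrow1$ as $\xi\rightarrow-\infty$, so $U$ is \emph{not} $O(\mathrm{e}^{\omega\xi})$ for any $\omega>\lambda_1$; hence no correct argument can produce a bound with a strictly contractive exponent $\omega\in(\lambda_1,\lambda_2)$. The structural reason the contraction cannot be run is the usual one: a monotone comparison or fixed-point iteration from above needs an initial ordering, and the only a priori information is $U\le1$. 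The constant $1$ is a fixed point of the nonlinear operator in \eqref{eq3.10}, and your linearized operator maps constants to strictly larger constants (the state $0$ is linearly unstable), so iterating from $U\le1$ yields nothing; and one cannot place $C\mathrm{e}^{\omega\xi}$ above an arbitrary solution without already knowing the decay one is trying to prove. The same initialization problem defeats the barrier $(a-\xi)\mathrm{e}^{\lambda^*\xi}$ in your critical case, so the case split does not rescue the plan. (Your eigenvalue computation $\rho(\omega)=\frac{c\mu+g(\omega)}{c(\mu+\omega)}$ is correct, but it is a red herring here.)

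The patch in your third paragraph does not close this gap because the inequality it produces points the wrong way. Bounding the forward shifts by Lemma \ref{le3.3} turns \eqref{eq3.10} into an estimate of the form $U(\xi)\le B\,\mathrm{e}^{-\mu\xi}\int_{-\infty}^{\xi}\mathrm{e}^{\mu\zeta}U(\zeta)\,\mathrm{d}\zeta$ with necessarily $B>\mu$: $U$ is dominated by an \emph{expanding} linear functional of its own past. Such an upper bound, even combined with $\int_{-\infty}^{0}U\,\mathrm{d}\xi<\infty$, is consistent with sub-exponential decay (a slowly varying profile such as $|\xi|^{-2}$ near $-\infty$ satisfies both), so it cannot ``rule out sub-exponential behavior''. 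What a proof needs, and what the paper's proof actually derives, is the \emph{reverse} inequality, in which the pointwise value dominates the accumulated mass to its left. Concretely: with $\mathscr{F}(0):=\inf_{\xi\le 0}f(U(\xi))/U(\xi)>0$ and $V(\zeta):=\int_{-\infty}^{\zeta}U(\xi)\,\mathrm{d}\xi$ (finite, as you correctly argue), integrating \eqref{eq1.17} twice gives
\begin{equation*}
cV(\zeta)\;\ge\;\frac16\int_{-\infty}^{\zeta}\mathcal{D}_h[V](\eta)\,\mathrm{d}\eta+\mathscr{F}(0)\int_{-\infty}^{\zeta}V(\eta)\,\mathrm{d}\eta\;\ge\;\mathscr{F}(0)\,\zeta_3\,V(\zeta-\zeta_3)
\end{equation*}
for every $\zeta_3>0$, where the $\mathcal{D}_h$-term is nonnegative because $V$ is increasing (no appeal to Lemma \ref{le3.3} is needed). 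Choosing $\zeta_3>c/\mathscr{F}(0)$ makes the factor $\mathscr{F}(0)\zeta_3/c>1$, and iterating backwards in steps of $\zeta_3$ yields $V(\zeta)=O(\mathrm{e}^{\omega\zeta})$ with $\mathrm{e}^{-\omega\zeta_3}=c/(\zeta_3\mathscr{F}(0))$; one more use of the integrated equation transfers this to $U$. Note that this mechanism needs no distinction between $c>c^*$ and $c=c^*$, does not involve Proposition \ref{pr3.1}, and the exponent it produces satisfies $\omega<\lambda_1$, consistent with the true decay rate. Your two correct ingredients (integrability of $U$ near $-\infty$ and the ratio bound) are useful, but they must be assembled into this lower-bound, self-controlling inequality for $V$, not into an upper-bound Volterra estimate for $U$.
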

\begin{proof}
	For any given $\zeta\in\mathbb{R}$, let 
	\begin{equation*}
		\mathscr{F}(\zeta):=\inf_{\xi\leq\zeta}\frac{f(U(\xi))}{U(\xi)}=\inf_{s\in[0,\varpi(\zeta)]}\frac{f(s)}{s}>0, \ \varpi(\zeta):=\sup_{\xi\leq\zeta}U(\xi)\in(0,1),\ \xi\in\mathbb{R}.
	\end{equation*}	
	Taking $\zeta_1<\zeta$ on $\mathbb{R}$, and integrating the first equation of \eqref{eq1.17} from $\zeta_1$ to $\zeta$, we have
	\begin{equation}
		\label{eq3.220}
		c(U(\zeta)-U(\zeta_1))-\int_{\zeta_1}^{\zeta}\frac{1}{6}\mathcal{D}_h[U](\xi)\mathrm{d}\xi=\int_{\zeta_1}^{\zeta}f(U(\xi))\mathrm{d}\xi\geq\mathscr{F}(\zeta)\int_{\zeta_1}^{\zeta}U(\xi)\mathrm{d}\xi.
	\end{equation} 
	Without loss of generality, let $\delta_m\geq0, m=1,2,3$. Then it follows from $U(\xi)\in[0,1]$, $\xi\in\mathbb{R}$, that 
	\begin{equation}
		\label{eq3.22}
		\begin{aligned}
			&\mathscr{F}(\zeta)\int_{\zeta_1}^{\zeta}U(\xi)\mathrm{d}\xi\\
			\leq&c(U(\zeta)-U(\zeta_1))-\int_{\zeta_1}^{\zeta}\frac{1}{6}\left[\sum_{m=1}^{3}(U(\xi+\delta_m)+U(\xi-\delta_m))-6U(\xi)\right]\mathrm{d}\xi\\
			=&c-\frac{1}{6}\sum_{m=1}^{3}\left[\int_{\zeta_1-\delta_m}^{\zeta-\delta_m}U(\xi)\mathrm{d}\xi-\int_{\zeta_1}^{\zeta}U(\xi)\mathrm{d}\xi+\int_{\zeta_1+\delta_m}^{\zeta+\delta_m}U(\xi)\mathrm{d}\xi-\int_{\zeta_1}^{\zeta}U(\xi)\mathrm{d}\xi\right]\\
			=&c-\frac{1}{6}\sum_{m=1}^{3}\left[\int_{\zeta}^{\zeta+\delta_m}U(\xi)\mathrm{d}\xi-\int_{\zeta-\delta_m}^{\zeta}U(\xi)\mathrm{d}\xi+\int_{\zeta_1-\delta_m}^{\zeta_1}U(\xi)\mathrm{d}\xi-\int_{\zeta_1}^{\zeta_1+\delta_m}U(\xi)\mathrm{d}\xi\right]\\
			\leq&c+\frac{1}{6}\sum_{m=1}^{3}2\delta_m\\
			\leq&c+1.
		\end{aligned}	
	\end{equation} 
	Note that $\mathscr{F}(-\infty)=f'(0)$ and $\mathscr{F}(\zeta)$ is non-increasing. By taking  $\zeta_1\rightarrow-\infty$ in \eqref{eq3.22}, we obtain
	\begin{equation*}
		\int_{-\infty}^{\zeta}U(\xi)\mathrm{d}\xi\leq\frac{c+1}{\mathscr{F}(\zeta)}\leq\frac{c+1}{\mathscr{F}(0)},
	\end{equation*}   
	which implies that $U(\xi)$ is integrable on $(-\infty,\zeta)$ for $\zeta\in\mathbb{R}$. Let 
	\begin{equation}
		\label{eq3.23}
		V(\zeta):=\int_{-\infty}^{\zeta}U(\xi)\mathrm{d}\xi.
	\end{equation}	
	Then $V(\zeta)$ is increasing with respect to $\zeta$ and $V(\zeta)\in(0,+\infty)$. By choosing $\zeta_2\leq0$, we deduce from \eqref{eq3.220} that
	\begin{equation*}
		cV(\zeta_2)\geq\int_{-\infty}^{\zeta_2}\frac{1}{6}\mathcal{D}_h[V](\zeta)\mathrm{d}\zeta+\mathscr{F}(0)\int_{-\infty}^{\zeta_2}V(\zeta)\mathrm{d}\zeta,
	\end{equation*}  
	where 
	\begin{equation*}
		\begin{aligned}
			\int_{-\infty}^{\zeta_2}\frac{1}{6}\mathcal{D}_h[V](\zeta)\mathrm{d}\zeta=&\frac{1}{6}\int_{-\infty}^{\zeta_2}\left[\sum_{m=1}^{3}(V(\zeta+\delta_m)+V(\zeta-\delta_m)-2V(\zeta))\right]\mathrm{d}\zeta\\
			=&\frac{1}{6}\sum_{m=1}^{3}\left(\int_{\zeta_2}^{\zeta_2+\delta_m}V(\zeta)\mathrm{d}\zeta-\int_{\zeta_2-\delta_m}^{\zeta_2}V(\zeta)\mathrm{d}\zeta\right)\\
			\geq&0.
		\end{aligned}
	\end{equation*} 
	It follows that there exists $\zeta_3>0$ such that  
	\begin{equation}
		\label{eq3.26}
		cV(\zeta_2)\geq \mathscr{F}(0)\int_{\zeta_2-\zeta_3}^{\zeta_2}V(\zeta)\mathrm{d}\zeta\geq \mathscr{F}(0)\zeta_3V(\zeta_2-\zeta_3).
	\end{equation}   
	We choose $\zeta_3>\frac{c}{\mathscr{F}(0)}$, then there exists $\omega=\frac{1}{\zeta_3}\ln\frac{\zeta_3\mathscr{F}(0)}{c}>0$ such that $\mathrm{e}^{-\omega\zeta_3}=\frac{c}{\zeta_3\mathscr{F}(0)}$. Multiplying both sides of \eqref{eq3.26} by $\frac{\mathrm{e}^{-\omega\zeta_2}}{c}$, we obtain  
	\begin{equation*}
		\mathrm{e}^{-\omega\zeta_2}V(\zeta_2)\geq\mathrm{e}^{-\omega\zeta_2}\frac{\zeta_3\mathscr{F}(0)}{c}V(\zeta_2-\zeta_3)=\mathrm{e}^{-\omega(\zeta_2-\zeta_3)}V(\zeta_2-\zeta_3), \ \zeta_2\leq0.
	\end{equation*}   
	Let $\overline{V}:=\max_{\zeta_2\in[-\zeta_3,0]}\{\mathrm{e}^{-\omega\zeta_2}V(\xi_2)\}$. Then 
	\begin{equation}
		\label{eq3.27}	
		\mathrm{e}^{-\omega\zeta}V(\zeta)\leq\overline{V}, \ \zeta\in(-\infty,0],
	\end{equation}
	which implies that  $V(\zeta)=O(\mathrm{e}^{\omega\zeta})$, as $\zeta\rightarrow-\infty$. By taking $\zeta_1\rightarrow-\infty$ in \eqref{eq3.220}, and combining with \eqref{eq3.23}, we have 
	\begin{equation*}
		\begin{aligned}
			cU(\zeta)=&\int_{-\infty}^{\zeta}\frac{1}{6}\mathcal{D}_h[U](\xi)\mathrm{d}\xi+\int_{-\infty}^{\zeta}f(U(\xi))\mathrm{d}\xi\\
			\leq&\frac{1}{6}\left[\sum_{m=1}^{3}(V(\zeta+\delta_m)+V(\zeta-\delta_m))-6V(\zeta)\right]+f'(0)V(\zeta).
		\end{aligned}
	\end{equation*} 
	Moreover, by multiplying both sides of the above inequality with $\frac{\mathrm{e}^{-\omega\zeta}}{c}$, and combining with \eqref{eq3.27}, we obtain
	\begin{equation*}
		\begin{aligned}
			\mathrm{e}^{-\omega\zeta}U(\zeta)\leq&\frac{1}{6c}\mathrm{e}^{-\omega\zeta}\sum_{m=1}^{3}(V(\zeta+\delta_m)+V(\zeta-\delta_m))+\frac{f'(0)-1}{c}\mathrm{e}^{-\omega\zeta}V(\zeta)\\
			\leq&\frac{f'(0)}{c}\overline{V}.
		\end{aligned}
	\end{equation*}
	Thus, for $\omega=\frac{1}{\zeta_3}\ln\frac{\zeta_3\mathscr{F}(0)}{c}>0$, we see that $U(\xi)=O(\mathrm{e}^{\omega\xi})$ as $\xi\rightarrow-\infty$. This completes the proof.
\end{proof}

By Lemma \ref{le3.4} and Ikehara's theorem, we obtain the following lemma about the asymptotic behavior and strict monotonicity of $U(\xi)$ for all $\xi\in\mathbb{R}$. 
\begin{lemma}
	\label{le3.42}
	Let $(c,U(\xi))$ be a solution of \eqref{eq1.17}. Then 
	\begin{equation}
		\label{eq3.290}
		\lim_{\xi\rightarrow-\infty}\frac{U'(\xi)}{U(\xi)}=\lambda_1, \ \lim_{\xi\rightarrow+\infty}\frac{U'(\xi)}{U(\xi)-1}=\lambda_0, \ \textup{for} \  c\geq c^*,
	\end{equation}
	where $\lambda_1$ is given in Proposition \ref{pr3.1}, and $\lambda_0$ is the unique negative real root of the following problem
	\begin{equation}
		\label{eq3.291}
		c\lambda-\frac{1}{6}\left[\sum_{m=1}^{3}(\mathrm{e}^{\lambda\delta_m}+\mathrm{e}^{-\lambda\delta_m})-6\right]-f'(1)=0.
	\end{equation}	
	Moreover, $U'(\xi)>0$ for $\xi\in\mathbb{R}$.
\end{lemma}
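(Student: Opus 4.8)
The plan is to obtain the one-sided exponential asymptotics of $U$ at each end via a bilateral Laplace transform combined with Ikehara's theorem, to recover the two logarithmic derivatives directly from equation \eqref{eq1.17}, and finally to deduce strict monotonicity from a sliding argument anchored at the two tails.

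\emph{Behavior as $\xi\to-\infty$.} Write $f(U)=f'(0)U+\mathcal R(U)$ with $\mathcal R(s):=f(s)-f'(0)s$; by \eqref{eq3.2} and \eqref{eq1.3} one has $-Ns^{1+\theta}\le\mathcal R(s)\le0$, so $\mathcal R(U(\xi))=O(U(\xi)^{1+\theta})$ as $\xi\to-\infty$. Lemma~\ref{le3.4} gives $U(\xi)=O(\mathrm e^{\omega\xi})$, so $\widehat U(s):=\int_{-\infty}^{+\infty}\mathrm e^{-s\xi}U(\xi)\,\mathrm d\xi$ converges on the strip $0<\mathrm{Re}\,s<\omega$, while the remainder transform $\widehat{\mathcal R}(s):=\int_{-\infty}^{+\infty}\mathrm e^{-s\xi}\mathcal R(U(\xi))\,\mathrm d\xi$ converges on the wider strip $0<\mathrm{Re}\,s<(1+\theta)\omega$. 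Applying the transform to \eqref{eq1.17}, integrating the derivative term by parts (the boundary terms vanish in the strip), and using that $\mathcal D_h$ sends $U(\cdot\pm\delta_m)$ to the factor $\mathrm e^{\pm s\delta_m}$, I would obtain the identity $G(c,s)\widehat U(s)=\widehat{\mathcal R}(s)$, where $G(c,s)=cs-g(s)$ is exactly the (entire) characteristic function of Proposition~\ref{pr3.1}. Hence $\widehat U(s)=\widehat{\mathcal R}(s)/G(c,s)$ continues meromorphically past $\mathrm{Re}\,s=\omega$, and since $G(c,\cdot)<0$ on $[0,\lambda_1)$ while $G(c,\lambda_1)=0$, the rightmost real singularity of $\widehat U$ sits at $s=\lambda_1$ — a simple pole when $c>c^*$ and a double pole when $c=c^*$ (where $\lambda_1=\lambda^*=\lambda_2$). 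Ikehara's theorem \cite[Theorem 2.12]{EE} then yields $U(\xi)\sim A\mathrm e^{\lambda_1\xi}$ (respectively $U(\xi)\sim A|\xi|\mathrm e^{\lambda_1\xi}$) as $\xi\to-\infty$; a short bootstrap enlarges $\omega$ if needed so that $\lambda_1$ lies in the continuation strip.

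In either case $U(\xi\pm\delta_m)/U(\xi)\to\mathrm e^{\pm\lambda_1\delta_m}$, so dividing \eqref{eq1.17} by $U(\xi)$ and letting $\xi\to-\infty$ gives
\begin{equation*}
c\lim_{\xi\to-\infty}\frac{U'(\xi)}{U(\xi)}=\frac16\sum_{m=1}^{3}\left(\mathrm e^{\lambda_1\delta_m}+\mathrm e^{-\lambda_1\delta_m}\right)-1+f'(0)=g(\lambda_1)=c\lambda_1,
\end{equation*}
the last equality being $G(c,\lambda_1)=0$; this proves the first limit in \eqref{eq3.290}. For the right end I set $V:=1-U$, which solves $cV'=\frac16\mathcal D_h[V]+f'(1)V+o(V)$ near $V=0$. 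After establishing the decay $V(\xi)=O(\mathrm e^{-\tilde\omega\xi})$ as $\xi\to+\infty$ — the analogue of Lemma~\ref{le3.4}, now using $f'(1)<0$ — the same transform/Ikehara scheme applied to $V$ gives $V(\xi)\sim B\mathrm e^{\lambda_0\xi}$ with $\lambda_0$ the negative root of \eqref{eq3.291}, and dividing the $V$-equation by $V$ as above yields $\lim_{\xi\to+\infty}U'/(U-1)=\lim_{\xi\to+\infty}V'/V=\lambda_0$.

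\emph{Strict monotonicity.} These asymptotics already force $U'(\xi)>0$ for all large $|\xi|$ (since $U'\sim\lambda_1U>0$ at $-\infty$ and $U'\sim\lambda_0(U-1)>0$ at $+\infty$). To propagate positivity to all of $\mathbb{R}$ I would slide: $V_\tau(\xi):=U(\xi+\tau)-U(\xi)$ satisfies the linear equation $cV_\tau'=\frac16\mathcal D_h[V_\tau]+b_\tau(\xi)V_\tau$ with $b_\tau$ bounded, and the tail asymptotics give $V_\tau>0$ near $\pm\infty$ for every $\tau>0$ (using $\mathrm e^{\lambda_1\tau}>1$ and $\mathrm e^{\lambda_0\tau}<1$) and $V_\tau>0$ everywhere for $\tau$ large. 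Decreasing $\tau$ to $\tau^*:=\inf\{\tau>0:V_s\ge0\ \forall s\ge\tau\}$ and assuming $\tau^*>0$, the nonnegative $V_{\tau^*}$ attains an interior zero at some $\xi_0$; evaluating its equation there forces $V_{\tau^*}(\xi_0\pm\delta_m)=0$, and iterating the shift toward $-\infty$ contradicts $V_{\tau^*}>0$ near $-\infty$. Hence $\tau^*=0$, i.e. $U'\ge0$; the identical interior-zero propagation applied to $W:=U'$ upgrades this to $U'>0$ on $\mathbb{R}$.

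The transform bookkeeping and strip estimates are routine. I expect the genuine obstacles to be twofold: the degenerate case $c=c^*$, where $\lambda_1$ is a \emph{double} root of $G(c^*,\cdot)$, so Ikehara must be used in its polynomial-correction form and one must verify that the extra $|\xi|$ factor still leaves the logarithmic derivative tending to $\lambda_1$; and the right-tail exponential bound for $V$, since Lemma~\ref{le3.4} is stated only at $-\infty$ and its argument must be reworked around the stable state $U=1$. The sliding step is standard once both tails are controlled, its only delicate point being that the contact point is interior — which is precisely what the tail asymptotics guarantee.
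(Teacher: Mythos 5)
Your route for the tail asymptotics --- two-sided Laplace transform, the identity $G(c,s)\widehat U(s)=\widehat{\mathcal R}(s)$, meromorphic continuation, then Ikehara --- is the same as the paper's, but as written it contains a genuine circularity. Ikehara's theorem in the form cited (\cite[Theorem 2.12]{EE}; see also the version used in \cite{CC2}) is a \emph{Tauberian} theorem: its hypothesis is that the function whose transform you analyze is non-decreasing. At the stage where you invoke it, $U$ is only known to satisfy $0\le U\le 1$ with the prescribed limits at $\pm\infty$; monotonicity of $U$ is precisely the final assertion of the lemma, and in your plan it is deduced \emph{after} the asymptotics, by a sliding argument that itself consumes those asymptotics. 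So the asymptotics need monotonicity and the monotonicity needs the asymptotics. The paper breaks this loop with a short but essential observation: since $\mathcal D_h[U]\ge -6U$ and $f\ge 0$, the profile equation gives $cU'\ge -U$, hence $\hat U(\xi):=U(\xi)\mathrm e^{\xi/c}$ is non-decreasing; Ikehara is applied to $\hat U$ (whose transform is $L(\lambda-\frac1c)$) and the conclusion is transferred back to $U$. The same monotone weighting is also what makes your ``short bootstrap'' of the strip legitimate: a pointwise bound $U(\xi)\le C\mathrm e^{(B-\epsilon)\xi}$ does not follow from convergence of the transform without monotonicity. Nothing in your write-up flags this --- you identify the double root at $c=c^*$ and the right tail as the genuine obstacles, but this is the real gap, and any correct completion must include the weighting trick or an equivalent Tauberian condition.

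Two further points. First, to apply Ikehara you must check that the auxiliary function (essentially $(\lambda-\lambda_1)^{k+1}$ times the transform) is analytic on the \emph{closed} strip $0<\mathrm{Re}\,\lambda\le\lambda_1$, which requires showing that $G(c,\cdot)$ has no zeros on the line $\mathrm{Re}\,\lambda=\lambda_1$ other than $\lambda=\lambda_1$ itself; the paper proves this by separating real and imaginary parts and using $c>0$, and your outline omits it. Second, granting the asymptotics, your sliding/zero-propagation proof of $U'\ge 0$ and then $U'>0$ is a genuinely different --- and arguably more self-contained --- route than the paper's: the paper asserts $U'\ge 0$ by appeal to the sufficiency construction and then excludes an interior zero $\hat\xi$ through the integral representation \eqref{eq3.10}, showing that $\mathscr H^\mu[U]$ is constant on $(-\infty,\hat\xi]$ and hence $\mu U(\hat\xi)=0$, a contradiction; your version instead propagates the zero along the shifts $\pm\delta_m$, which is sound because not all $\delta_m$ can vanish ($\delta_2+\delta_3=\cos\alpha$ and $\delta_2-\delta_3=\sqrt3\sin\alpha$ cannot both be zero). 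Likewise your reduction $\mathcal U=1-U$ at $+\infty$ matches the paper's, and you are right that the exponential bound there (the analogue of Lemma~\ref{le3.4}) must be reworked around the state $U=1$ --- a step the paper itself passes over in one sentence.
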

\begin{proof}
	We first show that the left limit in \eqref{eq3.290} holds. Define the two-sided Laplace transform of $U$ by
	\begin{equation*}
		L(\lambda):=\int_{\mathbb{R}}\mathrm{e}^{-\lambda\xi}U(\xi)\mathrm{d}\xi, \ \lambda\in\mathbb{C}, \ 0<\mathrm{Re}\lambda<\omega,
	\end{equation*}
	where $\omega$ is given in Lemma \ref{le3.4}. For $\lambda\in\mathbb{C}$, the equation $G(c,\lambda)$ in Proposition \ref{pr3.1} can be rewritten into
	\begin{equation*}
		G(c,\lambda)=c\lambda-\frac{1}{6}\left[\sum_{m=1}^{3}(\mathrm{e}^{\lambda\delta_m}+\mathrm{e}^{-\lambda\delta_m})-6\right]-f'(0).
	\end{equation*}
	Thus we have
	\begin{equation}	
		\label{eq3.292}
		\begin{aligned}
			\int_{\mathbb{R}}\frac{1}{6}\mathrm{e}^{-\lambda\xi}\mathcal{D}_h[U](\xi)\mathrm{d}\xi=&\frac{1}{6}\int_{\mathbb{R}}\mathrm{e}^{-\lambda\xi}\left[\sum_{m=1}^{3}(U(\xi+\delta_m)+U(\xi-\delta_m))-6U(\xi)\right]\mathrm{d}\xi\\
			=&\frac{1}{6}\int_{\mathbb{R}}\mathrm{e}^{-\lambda\xi}U(\xi)\left[\sum_{m=1}^{3}(\mathrm{e}^{\lambda\delta_m}+\mathrm{e}^{-\lambda\delta_m})-6\right]\mathrm{d}\xi\\
			=&L(\lambda)(c\lambda-G(c,\lambda)-f'(0)).
		\end{aligned}		
	\end{equation}
	Integrating the first equation of \eqref{eq1.17} multiplied by $\mathrm{e}^{-\lambda\xi}$ over $\mathbb{R}$, and combining with \eqref{eq3.292}, we obtain
	\begin{equation}	
		\label{eq3.293}
		\begin{aligned}
			&\int_{\mathbb{R}}\mathrm{e}^{-\lambda\xi}\left(f(U(\xi))-U(\xi)f'(0)\right)\mathrm{d}\xi\\
			=&c\int_{\mathbb{R}}\mathrm{e}^{-\lambda\xi}U'(\xi)\mathrm{d}\xi-\int_{\mathbb{R}}\frac{1}{6}\mathrm{e}^{-\lambda\xi}\mathcal{D}_h[U](\xi)\mathrm{d}\xi-\int_{\mathbb{R}}\mathrm{e}^{-\lambda\xi}U(\xi)f'(0)\mathrm{d}\xi\\
			=&c\lambda\int_{\mathbb{R}}\mathrm{e}^{-\lambda\xi}U(\xi)\mathrm{d}\xi-c\lambda L(\lambda)+G(c,\lambda)L(\lambda)\\
			=&G(c,\lambda)L(\lambda).	
		\end{aligned}			
	\end{equation}
	Since $U(-\infty)=0$, we can conclude from \eqref{eq3.2} that
	\begin{equation*}
		f'(0)U-f(U)=O(U^{1+\theta}), \ \xi\rightarrow-\infty.
	\end{equation*}
	It follows that $\int_{\mathbb{R}}\mathrm{e}^{-\lambda\xi}\left(f(U(\xi))-U(\xi)f'(0)\right)\mathrm{d}\xi$ is analytic for $0<\mathrm{Re}\lambda<\omega(1+\theta)$. If $\lambda$ is not a solution of $G(c,\lambda)=0$, then we see from \eqref{eq3.293} that $L(\lambda)$ is analytic for $0<\mathrm{Re}\lambda<\omega$. Since $U>0$, it follows from \cite[Lemma 5b]{W} that there exists a real number $B$ such that $L(\lambda)$ is analytic for $0<\mathrm{Re}\lambda< B$ and has a singularity at $\lambda=B$. By Proposition \ref{pr3.1}, we know that $L(\lambda)$ is analytic for $0<\mathrm{Re}\lambda<\lambda_1$.
	
	We rewrite \eqref{eq3.293} as 
	\begin{equation}
		\label{eq3.294}
		\int_{-\infty}^{0}\mathrm{e}^{-\lambda\xi}U(\xi)\mathrm{d}\xi=\frac{\int_{\mathbb{R}}\mathrm{e}^{-\lambda\xi}\left(f(U(\xi))-U(\xi)f'(0)\right)\mathrm{d}\xi}{G(c,\lambda)}-\int_{0}^{+\infty}\mathrm{e}^{-\lambda\xi}U(\xi)\mathrm{d}\xi.
	\end{equation}
	Assume that $G(c,\lambda)=0$ has any complex form solution $\lambda$ with $\mathrm{Re}\lambda=\lambda_1$ and $\mathrm{Im}\lambda=\lambda_3$. Then we have
	\begin{equation*}
		\left\{\begin{aligned}
			&c\lambda_1=\frac{1}{6}\sum_{m=1}^{3}\left(\mathrm{e}^{\lambda_1\delta_m}+\mathrm{e}^{-\lambda_1\delta_m}\right)\cos(\lambda_3\delta_m)-1+f'(0),\\
			&c\lambda_3=\frac{1}{6}\sum_{m=1}^{3}\left(\mathrm{e}^{\lambda_1\delta_m}+\mathrm{e}^{-\lambda_1\delta_m}\right)\sin(\lambda_3\delta_m).
		\end{aligned}\right.
	\end{equation*}
	From the above equations and $G(c,\lambda_1)=0$, we deduce that $\cos(\lambda_3\delta_m)=1$ and $\sin(\lambda_3\delta_m)=0$. By $c>0$, we obtain $\lambda_3=0$. This implies $\lambda=\lambda_1\in\mathbb{R}$. Therefore, $G(c,\lambda)=0$ does not have any solution with $\mathrm{Re}\lambda=\lambda_1$ except for $\lambda=\lambda_1$.
	
	Suppose that $U(\xi)$ is non-decreasing for small $\xi<0$. By \eqref{eq3.294}, we define
	\begin{equation*}
		H(\lambda):=\frac{(\lambda-\lambda_1)^{k+1}\int_{\mathbb{R}}\mathrm{e}^{-\lambda\xi}\left(f(U(\xi))-U(\xi)f'(0)\right)\mathrm{d}\xi}{G(c,\lambda)}-(\lambda-\lambda_1)^{k+1}\int_{0}^{+\infty}\mathrm{e}^{-\lambda\xi}U(\xi)\mathrm{d}\xi,
	\end{equation*}
	where $k=0$ if $c>c^*$, $k=1$ if $c=c^*$. Since $\int_{0}^{+\infty}\mathrm{e}^{-\lambda\xi}U(\xi)\mathrm{d}\xi$ is analytic on the strip $0<\mathrm{Re}\lambda\leq\lambda_1$, it follows from Proposition \ref{pr3.1} that $\lim_{\lambda\rightarrow\lambda_1}H(\lambda)$ exists, which implies that $H(\lambda)$ is analytic on the strip $0<\mathrm{Re}\lambda\leq\lambda_1$. From Ikehara's theorem given in \cite[Theorem 2.12]{EE}, we can conclude that $\lim_{\xi\rightarrow-\infty}\frac{U(\xi)}{\mathrm{e}^{\lambda_1\xi}}$ exists, and  $\lim_{\xi\rightarrow-\infty}\frac{U(\xi)}{|\xi|\mathrm{e}^{\lambda_1\xi}}$ exists. Then we set
	\begin{equation*}
		\lim_{\xi\rightarrow-\infty}\frac{U(\xi)}{\mathrm{e}^{\lambda_1\xi}}:=r_1\in[0,+\infty), \ 	\lim_{\xi\rightarrow-\infty}\frac{U(\xi)}{|\xi|\mathrm{e}^{\lambda_1\xi}}:=r_2\in[0,+\infty).
	\end{equation*}
	By Lemma \ref{le3.4} and $\lambda_1<\omega$, we deduce that $r_1\neq0$ and $r_2\neq0$. Therefore there exist $\vartheta_1=\frac{1}{\lambda_1}\ln\frac{1}{r_1}$ and $\vartheta_2=\frac{1}{\lambda_1}\ln\frac{1}{r_2}$ such that
	\begin{equation}
		\label{eq3.295}
		\lim_{\xi\rightarrow-\infty}\frac{U(\xi+\vartheta_1)}{\mathrm{e}^{\lambda_1\xi}}=1 \ \textup{for} \  c>c^*, \ 	\lim_{\xi\rightarrow-\infty}\frac{U(\xi+\vartheta_2)}{|\xi|\mathrm{e}^{\lambda_1\xi}}=1 \ \textup{for} \  c=c^*.
	\end{equation}	
	
	Suppose that $U(\xi)$ is non-monotone for small $\xi<0$. Then we set $\hat{U}(\xi):=U(\xi)\mathrm{e}^{\frac{\xi}{c}}$. Since 
	\begin{equation*}
		cU'(\xi)=\frac{1}{6}\left[\sum_{m=1}^{3}(U(\xi+\delta_m)+U(\xi-\delta_m))-6U(\xi)\right]+f(U(\xi))\geq-U(\xi),
	\end{equation*}
	it follows that
	\begin{equation*}
		\frac{\mathrm{d}\hat{U}(\xi)}{\mathrm{d}\xi}=\frac{1}{c}\left(cU'(\xi)+U(\xi)\right)\mathrm{e}^{\frac{\xi}{c}}\geq0,
	\end{equation*}
	which implies that $\hat{U}(\xi)$ is non-decreasing. We further define the two-sided Laplace transform of $\hat{U}$ by
	\begin{equation*}
		\hat{L}(\lambda):=\int_{\mathbb{R}}\mathrm{e}^{-\lambda\xi}\hat{U}(\xi)\mathrm{d}\xi, \ \lambda\in\mathbb{C}, \ 0<\mathrm{Re}\lambda<\omega.
	\end{equation*}
	Clearly, $\hat{L}(\lambda)=L(\lambda-\frac{1}{c})$. By repeating the above argument, we can deduce that there exist $\hat{\vartheta}_1$ and $\hat{\vartheta}_2$ such that 
	\begin{equation}
		\label{eq3.296}
		\lim_{\xi\rightarrow-\infty}\frac{\hat{U}(\xi+\hat{\vartheta}_1)}{\mathrm{e}^{(\lambda_1-\frac{1}{c})\xi}}=1 \ \textup{for} \  c>c^*, \  \text{ and } \	\lim_{\xi\rightarrow-\infty}\frac{\hat{U}(\xi+\hat{\vartheta}_2)}{|\xi|\mathrm{e}^{(\lambda_1-\frac{1}{c})\xi}}=1 \ \textup{for} \  c=c^*.
	\end{equation}	
	From \eqref{eq3.295} and \eqref{eq3.296}, we can conclude that $\lim_{\xi\rightarrow-\infty}\frac{U'(\xi)}{U(\xi)}=\lambda_1$ for  $c\geq c^*$.
	
	Next, we set $\mathcal{U}:=1-U$ and $\mathcal{F}(s):=f(1-s)$, it follows from \eqref{eq1.17} that
	\begin{equation*}
		c\mathcal{U}'(\xi)=\frac{1}{6}\mathcal{D}_h[\mathcal{U}](\xi)-\mathcal{F}(\mathcal{U}(\xi)), \ \xi\in\mathbb{R}.
	\end{equation*}
	It is evident that $\mathcal{F}'(0)=-f'(1)$ and $\mathcal{U}$ is non-increasing. From the methods used to prove $\lim_{\xi\rightarrow-\infty}\frac{U'(\xi)}{U(\xi)}=\lambda_1$, we conclude that $\lim_{\xi\rightarrow+\infty}\frac{U'(\xi)}{U(\xi)-1}=\lambda_0$, where $\lambda_0$ is the unique negative real root of \eqref{eq3.291}.
	
	Finally, we prove $U'(\xi)>0$ for $\xi\in\mathbb{R}$. From \eqref{eq3.290}, we can deduce that there exist $\underline{\xi}\ll0$ and $\overline{\xi}\gg0$ such that $U'(\xi)>0$ for $\xi\in(-\infty, \underline{\xi}]\cup[\overline{\xi},+\infty)$. For the solution $(c,U(\xi))$ of \eqref{eq1.17}, we can see from the proof of the sufficiency that $U'(\xi)\geq0$ for $\xi\in\mathbb{R}$, which, combining with $U(-\infty)=0$ and $U(+\infty)=1$, implies $U(\xi)\in(0,1)$ for $\xi\in[\underline{\xi},\overline{\xi}]$. Subsequently, we give the proof for $U'(\xi)>0$ for $\xi\in[\underline{\xi},\overline{\xi}]$. Assume that there exists $\hat{\xi}\in[\underline{\xi},\overline{\xi}]$ such that $U'(\hat{\xi})=0$. Recalling the equivalence form \eqref{eq3.10} of \eqref{eq1.17}, and taking the derivative of the two sides of \eqref{eq3.10} with respect to $\xi$, we obtain
	\begin{equation*}
		\begin{aligned}	
			U'(\xi)=&-\mu\mathrm{e}^{-\mu\xi}\int_{-\infty}^{\xi}\mathrm{e}^{\mu\zeta}\mathscr{H}^\mu[U](\zeta)\mathrm{d}\zeta+\mathscr{H}^\mu[U](\xi)\\
			=&-\mu\mathrm{e}^{-\mu\xi}\int_{-\infty}^{\xi}\mathrm{e}^{\mu\zeta}\left(\mathscr{H}^\mu[U](\zeta)-\mathscr{H}^\mu[U](\xi)\right)\mathrm{d}\zeta.
		\end{aligned}			
	\end{equation*}
	Since $U'(\xi)\geq0$ and $U'(\hat{\xi})=0$, it follows that
	\begin{equation*}
		0=-\mu\mathrm{e}^{-\mu\hat{\xi}}\int_{-\infty}^{\hat{\xi}}\mathrm{e}^{\mu\zeta}\left(\mathscr{H}^\mu[U](\zeta)-\mathscr{H}^\mu[U](\hat{\xi})\right)\mathrm{d}\zeta\geq0.		
	\end{equation*}
	Hence, we have $\mathscr{H}^\mu[U](\zeta)=\mathscr{H}^\mu[U](\hat{\xi})$ for $\zeta\leq\hat{\xi}$. By replacing $\xi$ of \eqref{eq3.8} with $\zeta$, and sending $\zeta\rightarrow-\infty$, we have $\mathscr{H}^\mu[U](\hat{\xi})=0$. From the first equation of \eqref{eq1.17}, we see that $\frac{1}{6}\mathcal{D}_h[U](\hat{\xi})+f(U(\hat{\xi}))=0$. It follows from \eqref{eq3.8} that $\mu U(\hat{\xi})=0$, which is  contradicting to $0<U(\hat{\xi})<1$. This implies $U'(\xi)>0$ for all $\xi\in\mathbb{R}$, which completes the proof.
\end{proof}

Based on Lemma~\ref{le3.42}, we follow the method in \cite{CG2} to give the following key lemma. Before doing so, for a fixed solution $(c,U(\xi))$ of \eqref{eq1.17}, we define 
\begin{equation}
	\label{eq3.30}
	\rho:=\rho(U)=\inf\left\{\frac{U(\xi)}{-U'(\xi)}\bigg| U(\xi)\leq\tau_0\right\}.
\end{equation}
From $U'(\xi)>0$ for $\xi\in\mathbb{R}$ and $\lim_{\xi\rightarrow-\infty}\frac{U'(\xi)}{U(\xi)}=\lambda_1$ in Lemma~\ref{le3.42}, we can conclude that $\rho\in(-\infty,0)$.
\begin{lemma}
	\label{le3.5}
	Suppose $(c,U_1(\xi))$ and $(c,U_2(\xi))$ are solutions of \eqref{eq1.17}, and there exists $\tau\in(0,\tau_0]$ such that $(1+\tau)U_2(\xi+\rho\tau)\geq U_1(\xi)$ for all $\xi\in\mathbb{R}$, where $\rho=\rho(U_2)$. Then $U_2(\xi)\geq U_1(\xi)$ for all $\xi\in\mathbb{R}$.
\end{lemma}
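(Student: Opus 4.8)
The plan is to run a sliding argument in the scaling parameter $\tau$, following the Chen--Guo scheme. Define
\[
\tau^*:=\inf\Big\{\tau\in[0,\tau_0]:(1+\tau)U_2(\xi+\rho\tau)\ge U_1(\xi)\ \text{for all }\xi\in\mathbb{R}\Big\},
\]
with $\rho=\rho(U_2)$ as in \eqref{eq3.30}. The hypothesis makes this set nonempty, so $\tau^*\in[0,\tau_0]$ is well defined, and by continuity the inequality $(1+\tau^*)U_2(\xi+\rho\tau^*)\ge U_1(\xi)$ persists at $\tau=\tau^*$. Since $\tau^*=0$ is precisely the assertion $U_2\ge U_1$, I argue by contradiction and suppose $\tau^*>0$, writing $W(\xi):=(1+\tau^*)U_2(\xi+\rho\tau^*)-U_1(\xi)\ge0$. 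As $\xi\to+\infty$ both waves tend to $1$, so $W(\xi)\to(1+\tau^*)-1=\tau^*>0$; as $\xi\to-\infty$ both tend to $0$, so $W(\xi)\to0$. Hence the infimum of $W$ can be reached only at a finite touching point or asymptotically as $\xi\to-\infty$, and I treat these two cases separately.

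\emph{Interior contact.} Suppose $W(\xi_0)=0$ at some finite $\xi_0$. Because $\mathcal{D}_h$ is linear and shift invariant, the rescaled shift $\phi:=(1+\tau^*)U_2(\cdot+\rho\tau^*)$ obeys $c\phi'-\tfrac16\mathcal{D}_h[\phi]-f(\phi)=(1+\tau^*)f(U_2(\cdot+\rho\tau^*))-f((1+\tau^*)U_2(\cdot+\rho\tau^*))\ge0$ by the monostable (KPP) subhomogeneity of $f$, so $\phi$ is a generalized supersolution while $U_1$ solves \eqref{eq1.17} exactly. At $\xi_0$ the $C^1$ function $W$ attains a global minimum $0$, so $W'(\xi_0)=0$, and subtracting the two equations leaves $\tfrac16\mathcal{D}_h[W](\xi_0)\le0$. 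But $W\ge0$ with $W(\xi_0)=0$ gives $\mathcal{D}_h[W](\xi_0)=\sum_{m=1}^{3}\big(W(\xi_0+\delta_m)+W(\xi_0-\delta_m)\big)\ge0$, forcing $\mathcal{D}_h[W](\xi_0)=0$ and hence $W(\xi_0\pm\delta_m)=0$ for $m=1,2,3$. Each neighbour is again a global minimum, so the discrete strong maximum principle propagates the zero and, upon iteration, produces zeros of $W$ at all points $\xi_0+\sum_{m}k_m\delta_m$ with $k_m\in\mathbb{Z}$. Since $\delta_1,\delta_2,\delta_3$ are not all zero they generate an additive set unbounded above, yielding zeros of $W$ arbitrarily far to the right and contradicting $W(\xi)\to\tau^*>0$. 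Thus no finite touching point exists and $W>0$ on $\mathbb{R}$.

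\emph{Contact at $-\infty$ and lowering $\tau^*$.} Here the choice of $\rho$ is decisive. For $\xi$ with $U_2(\xi)\le\tau_0$ the definition \eqref{eq3.30} gives $\rho\le -U_2(\xi)/U_2'(\xi)$, i.e.\ $\rho U_2'(\xi)\le -U_2(\xi)$; consequently, whenever $U_2(\xi+\rho\tau)\le\tau_0$,
\[
\frac{d}{d\tau}\big[(1+\tau)U_2(\xi+\rho\tau)\big]=U_2(\xi+\rho\tau)+(1+\tau)\rho U_2'(\xi+\rho\tau)\le -\tau\,U_2(\xi+\rho\tau)\le0.
\]
So on a left tail $\{\xi\le\Xi\}$, chosen so that $U_2(\xi+\rho\tau)\le\tau_0$ for all $\tau$ near $\tau^*$, the map $\tau\mapsto(1+\tau)U_2(\xi+\rho\tau)$ is nonincreasing; decreasing $\tau$ below $\tau^*$ only raises the left side, preserving the inequality with $U_1$ there. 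On the front $\{\xi\ge\Xi'\}$ the factor $1+\tau>1$ together with $U_2\to1$ maintains a definite gap, and on the compact middle $[\Xi,\Xi']$ we have $\min W>0$ from the previous step, so by continuity of $(\tau,\xi)\mapsto(1+\tau)U_2(\xi+\rho\tau)$ the inequality survives a small decrease of $\tau$. Combining the three regions, $(1+\tau)U_2(\cdot+\rho\tau)\ge U_1$ still holds for some $\tau<\tau^*$, contradicting minimality. Hence $\tau^*=0$ and $U_2\ge U_1$.

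The main obstacle, I expect, is the interior-contact step: making the discrete strong maximum principle rigorous on the continuum lattice, namely verifying that $\phi$ is genuinely a supersolution from the monostable structure of $f$, that zeros propagate along all three shifts $\delta_m$, and that the resulting zero set is unbounded above so that it collides with the positive limit $\tau^*$ at $+\infty$. The tail estimate, though fiddly in its bookkeeping of the $\tau$-dependent region $\{U_2(\xi+\rho\tau)\le\tau_0\}$ and in using the asymptotics of Lemma~\ref{le3.42}, is essentially dictated by the defining property of $\rho$.
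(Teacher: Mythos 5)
Your sliding scheme is the same as the paper's, and your tail computation (the defining property \eqref{eq3.30} of $\rho$ giving $\frac{\partial}{\partial\tau}\left[(1+\tau)U_2(\xi+\rho\tau)\right]\le 0$ wherever $U_2(\xi+\rho\tau)\le\tau_0$) is exactly the paper's inequality. The genuine gap is in your interior-contact step, precisely at the point you flag as the ``main obstacle'': the claim that $\phi=(1+\tau^*)U_2(\cdot+\rho\tau^*)$ is a supersolution, i.e.\ that $(1+\tau^*)f(s)\ge f((1+\tau^*)s)$ for \emph{every} value $s=U_2(\xi+\rho\tau^*)\in(0,1)$, does not follow from the standing hypotheses. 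Conditions \eqref{eq1.3} and \eqref{eq3.2} only give $f(s)\le f'(0)s$; they do not force $s\mapsto f(s)/s$ to be non-increasing, which is what the subhomogeneity $f(\lambda s)\le\lambda f(s)$, $\lambda\ge1$, amounts to. For instance, take $f(s)=s\,g(s)$ with $g$ smooth, $g(0)=1$, $0<g\le 1$ on $(0,1)$, $g(1)=0$, and $g$ dipping and then rising (say $g(0.4)=0.2$, $g(0.7)=0.8$): this $f$ satisfies \eqref{eq1.3} and \eqref{eq3.2}, yet $f(0.7)=0.56>\tfrac{0.7}{0.4}f(0.4)=0.14$. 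Since your contact point $\xi_0$ may a priori lie anywhere --- in particular where $U_2(\xi_0+\rho\tau^*)$ takes such intermediate values --- you cannot conclude $\mathcal{D}_h[W](\xi_0)\le 0$, so the strong-maximum-principle propagation never gets started. The interior-contact case is therefore not merely ``fiddly''; as written it is unjustified (and unrepairable without extra hypotheses on $f$).

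The paper closes exactly this hole by using the $\tau$-monotonicity a second time, not to lower $\tau$ but to \emph{locate} the touching point: since $\varphi(\tau,\xi)=(1+\tau)U_2(\xi+\rho\tau)-U_1(\xi)$ is strictly decreasing in $\tau$ wherever $U_2(\xi+\rho\tau)\le\tau_0$, and $\varphi(\tau^*,+\infty)=\tau^*>0$, minimality of $\tau^*$ forces a touching point $\xi_0$ to exist \emph{inside the front region} $\{\xi:\tau_0<U_2(\xi+\rho\tau^*)\le 1\}$. There only the local inequality $f((1+\tau)s)<(1+\tau)f(s)$ for $s\in(\tau_0,1]$ is needed, which the paper obtains in \eqref{eq3.32} from $sf'(s)-f(s)<0$ near $s=1$; evaluating the equation at the single touching point then gives the contradiction, with no propagation argument at all. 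In short: your case of contact at $-\infty$ is fine and is in substance the paper's argument, but your finite-contact case should be \emph{constrained} rather than handled head-on --- use the $\tau$-monotonicity you already established to push $\xi_0$ into the region where $U_2$ is close to $1$, and then apply the local estimate there. Your maximum-principle propagation along the shifts $\delta_m$ would be a valid (and elegant) alternative only under an additional global assumption such as $f(s)/s$ non-increasing, which this paper does not make.
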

\begin{proof}
	Let $\varphi(\tau,\xi):=(1+\tau)U_2(\xi+\rho\tau)-U_1(\xi)$ and $\tau^*:=\inf\{\tau>0|\varphi(\tau,\xi)\geq0, \xi\in\mathbb{R}\}$. Then we obtain $\varphi(\tau^*,\xi)\geq0$ for all $\xi\in\mathbb{R}$ by the continuity of $\varphi$.
	
	Suppose, contrary to $\tau^*=0$, that $\tau^*\in(0,\tau_0]$. When $\xi\in\{\zeta|0\leq U_2(\zeta+\rho\tau)\leq\tau_0\}$, we deduce from \eqref{eq3.30} that 
	\begin{equation*}
		\frac{\partial\varphi(\tau,\xi)}{\partial\tau}=U_2(\xi+\rho\tau)+\rho(1+\tau)U'_2(\xi+\rho\tau)<U_2(\xi+\rho\tau)+\rho U'_2(\xi+\rho\tau)\leq0.
	\end{equation*}
	Since $\varphi(\tau^*,+\infty)=\tau^*$, it follows that there exists $\xi_0\in\{\zeta|\tau_0<U_2(\zeta+\rho\tau^*)\leq1\}$ such that
	\begin{equation*}
		\frac{\partial\varphi(\tau,\xi)}{\partial\xi}\bigg|_{(\tau^*,\xi_0)}=\varphi(\tau^*,\xi_0)=0,\ \varphi(\tau^*,\xi_0\pm\delta_m)\geq0,
	\end{equation*}	
	where $\delta_m$ $(m=1,2,3)$ is defined in \eqref{eq3.3}. Therefore, we get
	\begin{equation*}
		\begin{aligned}	
			&U'_1(\xi_0)=(1+\tau^*)U'_2(\xi_0+\rho\tau^*), \ U_1(\xi_0)=(1+\tau^*)U_2(\xi_0+\rho\tau^*), \\ &U_1(\xi_0\pm\delta_m)\leq(1+\tau^*)U_2(\xi_0+\rho\tau^*\pm\delta_m),
		\end{aligned}	
	\end{equation*}
	which, together with the first equation of \eqref{eq1.17}, implies that
	\begin{equation}
		\label{eq3.31}
		\begin{aligned}	
			0=&-cU'_1(\xi_0)+\frac{1}{6}\mathcal{D}_h[U_1](\xi_0)+f(U_1(\xi_0))\\
			\leq&-c(1+\tau^*)U'_2(\xi_0+\rho\tau^*)+\frac{1}{6}\mathcal{D}_h[(1+\tau^*)U_2](\xi_0+\rho\tau^*)+f((1+\tau^*)U_2(\xi_0+\rho\tau^*))\\
			=&-(1+\tau^*)\frac{1}{6}\mathcal{D}_h[U_2](\xi_0+\rho\tau^*)-(1+\tau^*)f(U_2(\xi_0+\rho\tau^*))+\frac{1}{6}\mathcal{D}_h[(1+\tau^*)U_2](\xi_0+\rho\tau^*)\\
			&+f((1+\tau^*)U_2(\xi_0+\rho\tau^*))\\
			=&f((1+\tau^*)U_2(\xi_0+\rho\tau^*))-(1+\tau^*)f(U_2(\xi_0+\rho\tau^*)).
		\end{aligned}		
	\end{equation}
	For $\tau\in(0,\tau_0]$ and $s\in(\tau_0,1]$, where $\tau_0\rightarrow1^-$, we have
	\begin{equation*}
		\frac{\partial[f((1+\tau)s)-(1+\tau)f(s)]}{\partial\tau}\bigg|_{\tau=0}=sf'(s)-f(s)<0.
	\end{equation*}
	Hence, we conclude that	
	\begin{equation}
		\label{eq3.32}
		f((1+\tau)s)-(1+\tau)f(s)<0, \tau\in(0,\tau_0], \ s\in(\tau_0,1], \  \tau_0\rightarrow1^-.
	\end{equation}
	When $\tau$ and $s$ in \eqref{eq3.32} are set to $\tau^*$ and $U_2(\xi_0+\rho\tau^*)$, respectively, it becomes evident that  \eqref{eq3.32} contradicts \eqref{eq3.31}. Consequently, we deduce that $\tau^*=0$,  which clearly implies that $U_2(\xi)\geq U_1(\xi)$ for $\xi\in\mathbb{R}$.  This completes the proof.
\end{proof} 

The following strong comparison principle will also be used to prove the uniqueness.
\begin{lemma}
	\label{le3.6}
	Let $(c,U_1(\xi))$ and $(c,U_2(\xi))$ be solutions of \eqref{eq1.17} and $U_1(\xi)\leq U_2(\xi)$ for all $\xi\in\mathbb{R}$. Then either $U_1(\xi)\equiv U_2(\xi)$ or $U_1(\xi)<U_2(\xi)$ for all $\xi\in\mathbb{R}$.
\end{lemma}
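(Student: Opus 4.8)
The plan is to study the difference $W := U_2 - U_1$, which is nonnegative and of class $C^1$ by hypothesis, and to show that if $W$ vanishes at even a single point while $W \not\equiv 0$, a contradiction arises. Subtracting the two copies of the first equation in \eqref{eq1.17} and applying the mean value theorem to write $f(U_2(\xi)) - f(U_1(\xi)) = f'(\tilde{U}(\xi))W(\xi)$ for some intermediate value $\tilde{U}(\xi) \in [U_1(\xi), U_2(\xi)]$, I obtain the linear nonlocal identity
\[
cW'(\xi) + \bigl(1 - f'(\tilde{U}(\xi))\bigr)W(\xi) = \frac{1}{6}\sum_{m=1}^{3}\bigl(W(\xi+\delta_m) + W(\xi-\delta_m)\bigr),
\]
where $\delta_m$ are as in \eqref{eq3.3}. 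The crucial observation is that the right-hand side is nonnegative because $W \geq 0$.

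First I would convert this into a monotonicity statement. Using the integrating factor $P(\xi) := \exp\bigl(\frac{1}{c}\int_0^\xi (1 - f'(\tilde{U}(s)))\,\mathrm{d}s\bigr) > 0$, the identity becomes $(PW)'(\xi) = \frac{P(\xi)}{6c}\sum_{m=1}^3\bigl(W(\xi+\delta_m)+W(\xi-\delta_m)\bigr) \geq 0$, so that $PW$ is non-decreasing on $\mathbb{R}$. Suppose now that $W \not\equiv 0$ but $W(\xi_0) = 0$ for some $\xi_0$. Since $PW \geq 0$ is non-decreasing and $(PW)(\xi_0) = 0$, it must vanish on $(-\infty, \xi_0]$, whence $W \equiv 0$ on $(-\infty, \xi_0]$.

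Next I would show the zero set $Z := \{\xi : W(\xi) = 0\}$ is invariant under the shifts $\pm\delta_m$. At any $\xi \in Z$, the point $\xi$ is a global minimum of $W \geq 0$, so $W'(\xi) = 0$; substituting this back into the identity above forces $\sum_{m=1}^3\bigl(W(\xi+\delta_m)+W(\xi-\delta_m)\bigr) = 0$, and since every summand is nonnegative, $W(\xi \pm \delta_m) = 0$, i.e. $\xi \pm \delta_m \in Z$. Because $(\kappa,\sigma)$ is a unit vector, at least one $\delta_j \neq 0$, so $Z$ is invariant under translation by the positive number $|\delta_j|$. Combined with $(-\infty,\xi_0] \subseteq Z$, iterating this shift gives $(-\infty, \xi_0 + n|\delta_j|] \subseteq Z$ for every $n \in \mathbb{N}$, hence $Z = \mathbb{R}$ and $W \equiv 0$, contradicting $W \not\equiv 0$. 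Therefore, if $W \not\equiv 0$ then $W$ never vanishes, that is, $U_1(\xi) < U_2(\xi)$ for all $\xi \in \mathbb{R}$, which is the asserted dichotomy.

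I expect the only real subtlety to be the global propagation of the zero set: a purely local computation at a minimum yields only invariance under $\pm\delta_m$, which is insufficient when the ratios of the $\delta_m$ are rational and the generated shift group is discrete. The integrating-factor step is what resolves this, since producing the entire half-line $(-\infty,\xi_0]$ of zeros allows a single positive shift $|\delta_j|$ to sweep out all of $\mathbb{R}$, bypassing any commensurability issue.
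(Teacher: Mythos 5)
Your proof is correct and is essentially the paper's argument in different clothing: the paper also converts a single zero of $W=U_2-U_1$ into a half-line of zeros via a monotonicity argument (there, the representation \eqref{eq3.10} with kernel $\mathrm{e}^{\mu\xi}$ and the monotone operator $\mathscr{H}^\mu$ of \eqref{eq3.8}--\eqref{eq3.9} play exactly the role of your integrating factor $P$), and then propagates the zeros forward by the shifts $\pm\delta_m$ and iterates, just as you do. The only differences are cosmetic: you absorb the local terms with a variable-rate integrating factor and deduce the shift-invariance of the zero set from the first-derivative test at a global minimum of $W$, whereas the paper obtains the half-line of zeros and the forward propagation simultaneously by comparing \eqref{eq3.34} with \eqref{eq3.35}.
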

\begin{proof}
	We can assume that there exists $\xi^*\in\mathbb{R}$ such that $U_1(\xi)<U_2(\xi)$ for $\xi<\xi^*$, and $U_1(\xi^*)=U_2(\xi^*)$. It follows from \eqref{eq3.10} that
	\begin{equation*}
		U_2(\xi^*)-U_1(\xi^*)=\mathrm{e}^{-\mu\xi^*}\int_{-\infty}^{\xi^*}\mathrm{e}^{\mu\zeta}(\mathscr{H}^\mu[U_2](\zeta)-\mathscr{H}^\mu[U_1](\zeta))\mathrm{d}\zeta.
	\end{equation*}
	This implies that	
	\begin{equation}
		\label{eq3.33}
		\mathscr{H}^\mu[U_1](\xi)=\mathscr{H}^\mu[U_2](\xi), \ \xi\in(-\infty,\xi^*].
	\end{equation}	
	Without loss of generality, we let $\delta_m\geq0$, $m=1,2,3$. Then it follows from $U_1(\xi)\leq U_2(\xi)$, $\xi\in\mathbb{R}$, that 
	\begin{equation}
		\label{eq3.34}
		\begin{aligned}	
			0&\leq\sum_{m=1}^{3}(U_2(\xi+\delta_m)-U_1(\xi+\delta_m))+\sum_{m=1}^{3}(U_2(\xi-\delta_m)-U_1(\xi-\delta_m))\\
			&=\sum_{m=1}^{3}(U_2(\xi+\delta_m)+U_2(\xi-\delta_m))-\sum_{m=1}^{3}(U_1(\xi+\delta_m)+U_1(\xi-\delta_m))\\
			&=\mathcal{D}_h[U_2](\xi)+6U_2(\xi)-(\mathcal{D}_h[U_1](\xi)+6U_1(\xi)).
		\end{aligned}		
	\end{equation}	
	Meanwhile, for $\xi(-\infty,\xi^*]$, we plug \eqref{eq3.33} into \eqref{eq3.8} and obtain
	\begin{equation}
		\label{eq3.35}
		\begin{aligned}	
			&\mathcal{D}_h[U_2](\xi)+6U_2(\xi)-(\mathcal{D}_h[U_1](\xi)+6U_1(\xi))\\
			=&-6c\mu U_2(\xi)-6f(U_2(\xi))+6U_2(\xi)+6c\mu U_1(\xi)+6f(U_1(\xi))-6U_1(\xi)\\
			=&(6-6c\mu)(U_2(\xi)-U_1(\xi))-6(f(U_2(\xi))-f(U_1(\xi)))\\
			\leq&-6\left(c\mu-1-\max_{0\leq U\leq 1}|f'(U)|\right)(U_2(\xi)-U_1(\xi))\\
			\leq&0.
		\end{aligned}		
	\end{equation}
	Hence, by comparing \eqref{eq3.34} with \eqref{eq3.35}, we see that	$U_1(\xi)=U_2(\xi)$, $\xi\in(-\infty,\xi^*+\delta_m]$, $m=1,2,3$. By  replacing $\xi^*$ in the interval $(-\infty,\xi^*+\delta_m]$ with $\xi^*+\delta_m$ and repeating the above argument, we obtain $U_1(\xi)\equiv U_2(\xi)$ for $\xi\in\mathbb{R}$, which completes the proof.		
\end{proof}

\subsection{The proof of Theorem~\ref{th1.4}}\label{se32}
\begin{proof}
	Let $(c,U_1)$ and $(c,U_2)$ be solutions of \eqref{eq1.17}.  By translation if needed, we can assume that $U_1(0)=U_2(0)=\frac{1}{2}$. Theorem~\ref{th1.4} will be proved by $U_1=U_2$, the proof is divided into the following three steps.
	
	\emph{Step 1}. From \eqref{eq3.295}, we conclude that there exists some $\eta\in\mathbb{R}$ such that $\lim_{\xi\rightarrow-\infty}\frac{U_1(\xi)}{U_2(\xi)}=\mathrm{e}^{-\lambda_1\eta}$. This implies that we can suppose that $\lim_{\xi\rightarrow-\infty}\frac{U_1(\xi)}{U_2(\xi)}\leq1$ by exchanging $U_1$ and $U_2$. By $U'>0$ in Lemma \ref{le3.42}, we obtain that $\lim_{\xi\rightarrow-\infty}\frac{U_1(\xi+\zeta)}{U_2(\xi)}<1$ for $\zeta<0$. For the fixed $\zeta<0$, there exists $\xi_0<0$ such that $U_1(\xi+\zeta)<U_2(\xi)$ for $\xi\leq\xi_0$. Since $U(+\infty)=1$, for $\rho$ and $\tau_0$ defined in \eqref{eq3.30}, there exists a sufficiently small negative number $\zeta_0$ such that
	\begin{equation*}
		U_1(\xi+\zeta_0)\leq(1+\tau_0)U_2(\xi+\rho\tau_0), \ \xi\in\mathbb{R}.	
	\end{equation*}
	By Lemma~\ref{le3.5}, we obtain $U_1(\xi+\zeta_0)\leq U_2(\xi)$ for $\xi\in\mathbb{R}$. 
	
	\emph{Step 2}. Define $\zeta^*:=\sup\{\zeta<0|U_2(\xi)\geq U_1(\xi+\zeta), \xi\in\mathbb{R}\}$. To show that $\zeta^*=0$, we divide $\xi\in\mathbb{R}$ into three ranges to prove that $\zeta^*\in(-\infty,0)$ does not hold.
	
	Assume, for the sake of contradiction, that  $\zeta^*\in(-\infty,0)$. Then, from $\lim_{\xi\rightarrow-\infty}\frac{U_1(\xi+\frac{\zeta^*}{2})}{U_2(\xi+\frac{\zeta^*}{2})}\leq1$ and $U'_1>0$, we see that $\lim_{\xi\rightarrow-\infty}\frac{U_1(\xi+\zeta^*)}{U_2(\xi+\frac{\zeta^*}{2})}<1$. Hence there exists $\xi_1<0$ such that
	\begin{equation}
		\label{eq3.36}
		U_1(\xi+\zeta^*)\leq U_2(\xi+\frac{\zeta^*}{2}), \ \xi\in(-\infty,\xi_1].		
	\end{equation}
	Since $U_2(+\infty)=1$ and $U'_2(+\infty)=0$, there exists a sufficiently large $\xi_2$ such that, for $\tau\in(0,1)$ and $\rho\in(-\infty,0)$,
	\begin{equation*}
		\frac{\partial[(1+\tau)U_2(\xi+2\rho\tau)]}{\partial\tau}=U_2(\xi+2\rho\tau)+2\rho(1+\tau)U'_2(\xi+2\rho\tau)>0,	\ \xi\in[\xi_2,+\infty).
	\end{equation*}
	Therefore we get
	\begin{equation}
		\label{eq3.37}
		(1+\tau)U_2(\xi+2\rho\tau)\geq U_2(\xi)\geq U_1(\xi+\zeta^*),\ \xi\in[\xi_2,+\infty).		
	\end{equation}
	Since $U_2(\xi)\geq U_1(\xi+\zeta^*)$, $\xi\in\mathbb{R}$. It follows from Lemma~\ref{le3.6} that $U_2(\xi)>U_1(\xi+\zeta^*)$ for $\xi\in\mathbb{R}$. Furthermore, the uniform continuity of $U_2$ over $\mathbb{R}$ states that there exists $\varepsilon\in\left[0,\min\{\tau_0, \frac{\zeta^*}{4\rho}\}\right]$ such that
	\begin{equation}
		\label{eq3.38}
		(1+\varepsilon)U_2(\xi+2\rho\varepsilon)\geq U_1(\xi+\zeta^*),\ \xi\in[\xi_1,\xi_2].		
	\end{equation}
	For $\xi\in(-\infty,\xi_1]$, we see from $\varepsilon\leq\frac{\zeta^*}{4\rho}$ that $2\rho\varepsilon\geq\frac{\zeta^*}{2}$, which, together with \eqref{eq3.36}, implies that 
	\begin{equation}
		\label{eq3.39}
		(1+\varepsilon)U_2(\xi+2\rho\varepsilon)\geq U_2(\xi+\frac{\zeta^*}{2})\geq U_1(\xi+\zeta^*),\ \xi\in(-\infty,\xi_1].		
	\end{equation}
	Taking $\tau$ in \eqref{eq3.37} as $\varepsilon$, and combining \eqref{eq3.38} with \eqref{eq3.39}, we have
	\begin{equation*}
		(1+\varepsilon)U_2(\xi+2\rho\varepsilon)\geq U_1(\xi+\zeta^*),\ \xi\in\mathbb{R}.		
	\end{equation*}
	It follows from Lemma~\ref{le3.5} that $U_2(\xi+\rho\varepsilon)\geq U_1(\xi+\zeta^*)$ for $\xi\in\mathbb{R}$, contradicting to the definition of $\zeta^*$. Consequently, we obtain that $\zeta^*=0$ and $U_2(\xi)\geq U_1(\xi)$ for all $\xi\in\mathbb{R}$. 
	
	\emph{Step 3}. By applying Lemma~\ref{le3.6} with $U_1(0)=U_2(0)=\frac{1}{2}$, we know that $U_1(\xi)=U_2(\xi)$ for all $\xi\in\mathbb{R}$. The proof is completed.
\end{proof}

\section{Dependence of the minimal wave speed on an angle}\label{se4}
In this section, we prove Theorem \ref{th1.1}, which theoretically demonstrates the effect of an angle on the minimal wave speed. Additionally, we provide a graphical illustration in polar coordinates (see Figure \ref{fig2}) in Section \ref{sec:Dis}.

The proof of Theorem \ref{th1.1} can be accomplished by using the following key lemma concerning an angle $\alpha\in[0,\frac{\pi}{6}]$. Moreover, this lemma will also be illustrated in Figure \ref{fig3} in Section~\ref{sec:Dis}.

\begin{lemma}
	\label{le2.1}
	For $\alpha\in[0,\frac{\pi}{6}]$ and $n\in\mathbb{N}$, let
	\begin{equation*}
		\begin{aligned}
			\Phi_n(\alpha):=&\left(\frac{\sqrt{3}}{2}\cos\alpha+\frac{1}{2}\sin\alpha\right)\left(\frac{\sqrt{3}}{2}\sin\alpha-\frac{1}{2}\cos\alpha\right)^{2n+1}\\
			&+\left(\frac{\sqrt{3}}{2}\cos\alpha-\frac{1}{2}\sin\alpha\right)\left(\frac{\sqrt{3}}{2}\sin\alpha+\frac{1}{2}\cos\alpha\right)^{2n+1}-\sin\alpha(\cos\alpha)^{2n+1}.
		\end{aligned}
	\end{equation*}
	Then 
	\begin{equation*}	
		\Phi_n(\alpha)\left\{ \begin{aligned}
			&=0, &&\quad \mathrm{ for } \ \alpha\in[0, \ \frac{\pi}{6}], \ n=0,1,\\
			&=0, &&\quad \mathrm{ for } \  \alpha=0, \ \frac{\pi}{6}, \ n\geq2,\\
			&<0, &&\quad \mathrm{ for } \  \alpha\in(0,\frac{\pi}{6}), \ n\geq 2.
		\end{aligned}\right.
	\end{equation*}	
\end{lemma}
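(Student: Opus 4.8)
The plan is to introduce a change of variable that makes the trigonometric structure tractable. Setting $\beta=\frac{\pi}{6}-\alpha$, the three quantities appearing in $\Phi_n$ become $\frac{\sqrt{3}}{2}\cos\alpha+\frac{1}{2}\sin\alpha=\cos\beta$, $\frac{\sqrt{3}}{2}\sin\alpha-\frac{1}{2}\cos\alpha=-\sin\beta$, and similarly $\frac{\sqrt{3}}{2}\cos\alpha-\frac{1}{2}\sin\alpha=\sin(\alpha+\frac{\pi}{3})$ with $\frac{\sqrt{3}}{2}\sin\alpha+\frac{1}{2}\cos\alpha=\cos(\frac{\pi}{3}-\alpha)$. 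After collecting these, $\Phi_n$ reduces to a combination of products $\cos\phi\,(\sin\psi)^{2n+1}$ for angles that are integer combinations of $\alpha$ and $\pi/3$. The first step, therefore, is to verify the boundary and low-order cases directly: at $\alpha=0$ the first and third terms cancel against each other while the middle term vanishes (since $\frac{\sqrt{3}}{2}\sin 0-\frac12\cos0$ has odd power and $\sin0=0$), and at $\alpha=\frac{\pi}{6}$ a symmetric cancellation occurs; for $n=0$ and $n=1$ one checks the algebraic identity $\Phi_0\equiv\Phi_1\equiv0$ on the whole interval by expanding with the triple-angle and product-to-sum formulas.

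The core of the argument is the strict sign $\Phi_n(\alpha)<0$ for $\alpha\in(0,\frac{\pi}{6})$ and $n\ge 2$. The approach I would take is to write $a:=\frac{\sqrt3}{2}\cos\alpha+\frac12\sin\alpha$, $b:=\frac{\sqrt3}{2}\sin\alpha-\frac12\cos\alpha$, $d:=\frac{\sqrt3}{2}\cos\alpha-\frac12\sin\alpha$, $e:=\frac{\sqrt3}{2}\sin\alpha+\frac12\cos\alpha$, and $p:=\cos\alpha$, $q:=\sin\alpha$, and to observe the algebraic relations $a^2+b^2=d^2+e^2=p^2+\text{(cross terms)}$ together with $ab+de=pq$ (coming from the angle-addition identities; these are the manifestation that $\Phi_0=\Phi_1=0$). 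The idea is then to regard $\Phi_n$ as a function of the exponent and control its sign by comparing the magnitudes of the three bases $|b|,|e|,|p|$ on $(0,\frac{\pi}{6})$. Concretely, on this interval one has $0<-b<q<p$ and $0<b'<\cdots$ type orderings, so the dominant term for large odd powers is governed by the largest base; the strategy is to show the negative contribution $-q\,p^{2n+1}$ dominates once the two positive terms are bounded, using the ordering $p=\cos\alpha>e=\cos(\frac{\pi}{3}-\alpha)$ and $p>|b|=\sin(\frac{\pi}{6}-\alpha)$ valid precisely on the open interval.

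The rigorous mechanism I expect to use is induction on $n$ together with a telescoping identity: express $\Phi_{n+1}-\Phi_{n}$ as a single-signed quantity by factoring out the common base-square differences, so that monotonicity in $n$ is reduced to a fixed inequality among $b^2,e^2,p^2$ that holds throughout $(0,\frac{\pi}{6})$. Having established $\Phi_2<0$ as the base case (a finite computation), the inductive step $\Phi_{n+1}<\Phi_n<0$ follows from the sign of the increment. The main obstacle I anticipate is precisely establishing this telescoping/monotonicity step cleanly: the increment $\Phi_{n+1}-\Phi_n$ is not obviously single-signed because the three terms carry different bases, so the delicate part is to rewrite it in a form — perhaps as an integral of a derivative in a continuous parameter, or via the factorization $x^{2n+3}-x^{2n+1}=x^{2n+1}(x^2-1)$ applied to each base and then recombined using the identities $a^2+b^2$, $ab+de=pq$ — in which all cross terms cancel and a manifestly negative expression remains. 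Securing that cancellation, which is where the special geometry of the hexagonal directions enters, is the heart of the proof; the boundary cases and the continuity-based reductions around them are comparatively routine.
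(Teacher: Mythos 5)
Your handling of the boundary values and of the identities $\Phi_0\equiv\Phi_1\equiv 0$ is routine and fine, but the core of the lemma --- the strict inequality $\Phi_n(\alpha)<0$ on $(0,\frac{\pi}{6})$ for $n\ge 2$ --- is not established, and both mechanisms you propose for it break down. Write $a=\cos(\tfrac{\pi}{6}-\alpha)$, $b=-\sin(\tfrac{\pi}{6}-\alpha)$, $d=\cos(\tfrac{\pi}{6}+\alpha)$, $e=\sin(\tfrac{\pi}{6}+\alpha)$, $p=\cos\alpha$, $q=\sin\alpha$, so that $\Phi_n=a\,b^{2n+1}+d\,e^{2n+1}-q\,p^{2n+1}$. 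First, your induction rests on proving $\Phi_{n+1}\le\Phi_n$, but this monotonicity is false: for fixed $\alpha\in(0,\frac{\pi}{6})$ all three bases $|b|,e,p$ lie in $(0,1)$, so $\Phi_n\to 0$ as $n\to\infty$; if $\Phi_{n+1}\le\Phi_n$ held for all $n$, then $\Phi_n\le\Phi_2<0$ for all $n$, contradicting this limit. In fact $\Phi_n\sim -q\,p^{2n+1}\to 0^-$ (e.g.\ at $\alpha=\frac{\pi}{12}$, $\Phi_n\sim-\sin\frac{\pi}{12}\,(\cos\frac{\pi}{12})^{2n+1}$), so the sequence eventually \emph{increases} toward $0$ and no single-signed telescoping increment exists. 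Second, the ``dominant base'' strategy cannot be closed either. Note that only $d\,e^{2n+1}$ is positive (you speak of ``the two positive terms,'' but $a\,b^{2n+1}<0$ since $b<0$ on the open interval), and near $\alpha=0$ the would-be dominant term $q\,p^{2n+1}=O(\alpha)$ is far smaller than the positive term $d\,e^{2n+1}\to\frac{\sqrt3}{2}\,2^{-(2n+1)}>0$; negativity there comes entirely from the near-cancellation $d\,e^{2n+1}-a\,|b|^{2n+1}=O(\alpha)$, whose first-order coefficient must then be compared against that of $q\,p^{2n+1}$. So the crude ordering $|b|<e<p$ proves nothing by itself. You concede this yourself (``securing that cancellation \dots is the heart of the proof''), which is precisely an admission that the proof is missing.

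For contrast, the paper secures exactly this cancellation combinatorially rather than analytically: expanding $(\frac{\sqrt3}{2}\sin\alpha\pm\frac12\cos\alpha)^{2n+1}$ by the binomial theorem, the cross terms collapse to the factorization $\Phi_n(\alpha)=\frac32\sin\alpha\cos\alpha\sum_{l=0}^{n}\Psi(\alpha,l)$, where $\Psi(\alpha,l)$ is the product of the coefficient $\binom{2n+1}{2l+1}-\frac13\binom{2n+1}{2l}$ with the factor $(\frac{\sqrt3}{2}\sin\alpha)^{2l}(\frac12\cos\alpha)^{2n-2l}-(\frac12\cos\alpha)^{2n}$, whose sign is controlled by $\sqrt3\sin\alpha<\cos\alpha$. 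The coefficient changes sign exactly once, at $l^*=[\frac{3n+1}{4}]$, and the positive summands (those with $l>l^*$) are killed by pairing each with a negative summand and proving, via explicit ratio estimates ($\Psi_1>1$, $\Psi_2<-1$), that every paired sum is negative, uniformly in $n\ge4$ (with $n=2,3$ checked directly). If you wish to salvage your outline, you would need a comparison of this uniform-in-$n$ type; induction on $n$ with a monotone increment is not available.
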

\begin{proof}
	Applying the polynomial expansion to $\Phi_n(\alpha)$, we have
	\begin{equation*}
		\begin{aligned}
			\Phi_n(\alpha)=&\left(\frac{\sqrt{3}}{2}\cos\alpha+\frac{1}{2}\sin\alpha\right)\sum_{l=0}^{n}\binom{2n+1}{l}\left(\frac{\sqrt{3}}{2}\sin\alpha\right)^l\left(-\frac{1}{2}\cos\alpha\right)^{2n+1-l}\\
			&+\left(\frac{\sqrt{3}}{2}\cos\alpha-\frac{1}{2}\sin\alpha\right)\sum_{l=0}^{n}\binom{2n+1}{l}\left(\frac{\sqrt{3}}{2}\sin\alpha\right)^l\left(\frac{1}{2}\cos\alpha\right)^{2n+1-l}\\
			&-\sin\alpha(\cos\alpha)^{2n+1}\\	
			=&\sum_{l=0}^{n}\left[\binom{2n+1}{2l+1}-\frac{1}{3}\binom{2n+1}{2l}\right]3\sin\alpha\left(\frac{\sqrt{3}}{2}\sin\alpha\right)^{2l}\left(\frac{1}{2}\cos\alpha\right)^{2n+1-2l}\\
			&-\sin\alpha(\cos\alpha)^{2n+1}\\
			=&\frac{3}{2}\sin\alpha\cos\alpha\sum_{l=0}^{n}\Psi(\alpha,l),
		\end{aligned}		
	\end{equation*}
	where 
	\begin{equation*}
		\Psi(\alpha,l):=\Bigg[\binom{2n+1}{2l+1}-\frac{1}{3}\binom{2n+1}{2l}\Bigg]\Bigg[\left(\frac{\sqrt{3}}{2}\sin\alpha\right)^{2l}\left(\frac{1}{2}\cos\alpha\right)^{2n-2l}-\left(\frac{1}{2}\cos\alpha\right)^{2n}\Bigg].
	\end{equation*}		
	It is evident that $\Phi_0(\alpha)=\Phi_1(\alpha)=0$ for all $\alpha\in[0,\frac{\pi}{6}]$ when $n=0$ or $1$.
	
	Since  $\sqrt{3}\sin\alpha\leq\cos\alpha$ for $\alpha\in[0,\frac{\pi}{6}]$, when $n=2$, we have
	\begin{equation*}
		\begin{aligned}
			\Phi_2(\alpha)=\frac{3}{2}\sin\alpha\cos\alpha\left[\left(\frac{\sqrt{3}}{2}\sin\alpha\right)^{2}-\left(\frac{1}{2}\cos\alpha\right)^{2}\right]\left[6\left(\frac{1}{2}\cos\alpha\right)^{2}-\frac{2}{3}\left(\frac{\sqrt{3}}{2}\sin\alpha\right)^{2}\right]\leq0. 
		\end{aligned}	
	\end{equation*}
	
	When $n=3$, using the inequality  $\sqrt{3}\sin\alpha\leq\cos\alpha$, we obtain
	\begin{equation*}
		\begin{aligned}
			\Phi_3(\alpha)=&\frac{3}{2}\sin\alpha\cos\alpha\left[\left(\frac{\sqrt{3}}{2}\sin\alpha\right)^{2}-\left(\frac{1}{2}\cos\alpha\right)^{2}\right]\Bigg[36\left(\frac{1}{2}\cos\alpha\right)^{4}-\frac{4}{3}\left(\frac{\sqrt{3}}{2}\sin\alpha\right)^{2}\\
			&+8\left(\frac{\sqrt{3}}{2}\sin\alpha\right)^{2}\left(\frac{1}{2}\cos\alpha\right)^{2}\Bigg]\leq0. 		
		\end{aligned}
	\end{equation*}	
	Clearly, $\Phi_2(\alpha), \Phi_3(\alpha)\leq0$ for all $\alpha\in[0,\frac{\pi}{6}]$, and the equal sign holds if and only if $\alpha=0$ or $\frac{\pi}{6}$.
	
	Next, we deal with cases when $n\geq4$. Since 
	\begin{equation*}	
		\binom{2n+1}{2l+1}-\frac{1}{3}\binom{2n+1}{2l}=\binom{2n+1}{2l+1}\frac{6n-8l+2}{6n-6l+3}\left\{ \begin{aligned}
			&<0, &&l>\frac{3n+1}{4},\\
			&>0, &&l<\frac{3n+1}{4},\\
			&=0, &&l=\frac{3n+1}{4},
		\end{aligned}\right.
	\end{equation*}	
	it follows that there exists $l^*=\left[\frac{3n+1}{4}\right]$ such that
	\begin{equation*}
		\label{eq2.1}
		\begin{aligned}
			\Phi_n(\alpha)=&\frac{3}{2}\sin\alpha\cos\alpha\sum_{l=0}^{2l^*-n-1}\Psi(\alpha,l)+\frac{3}{2}\sin\alpha\cos\alpha\left[\Psi(\alpha,2l^*-n)+\Psi(\alpha,l^*+1)\right]\\
			&+\cdots+\frac{3}{2}\sin\alpha\cos\alpha\left[\Psi(\alpha,l^*-2)+\Psi(\alpha,n-1)\right]\\
			&+\frac{3}{2}\sin\alpha\cos\alpha\left[\Psi(\alpha,l^*-1)+\Psi(\alpha,n)\right]+\frac{3}{2}\sin\alpha\cos\alpha\Psi(\alpha,l^*)\\
			=&\frac{3}{2}\sin\alpha\cos\alpha\sum_{l=0}^{2l^*-n-1}\Psi(\alpha,l)+\frac{3}{2}\sin\alpha\cos\alpha\Psi(\alpha,l^*)\\
			&+\frac{3}{2}\sin\alpha\cos\alpha\sum_{l=2l^*-n}^{l^*-1}\left[\Psi(\alpha,l)+\Psi(\alpha,l+n-l^*+1)\right].
		\end{aligned}
	\end{equation*}
	We will show $\Phi_n(\alpha)<0$, $\alpha\in(0,\frac{\pi}{6})$, $n\geq4$, through the following two steps.	
	
	\emph{Step 1}. When $l\in[0,2l^*-n-1]$, we have $\binom{2n+1}{2l+1}-\frac{1}{3}\binom{2n+1}{2l}>0$. Using the fact that $\sqrt{3}\sin\alpha<\cos\alpha$ for $\alpha\in(0,\frac{\pi}{6})$, it follows that
	\begin{equation*}
		\Psi(\alpha,l)<\left[\binom{2n+1}{2l+1}-\frac{1}{3}\binom{2n+1}{2l}\right]\left[\left(\frac{1}{2}\cos\alpha\right)^{2l}\left(\frac{1}{2}\cos\alpha\right)^{2n-2l}-\left(\frac{1}{2}\cos\alpha\right)^{2n}\right]=0.
	\end{equation*}
	Therefore we get $\frac{3}{2}\sin\alpha\cos\alpha\sum_{l=0}^{2l^*-n-1}\Psi(\alpha,l)<0$, $\alpha\in(0,\frac{\pi}{6})$. Similarly, when $l=l^*$, we have
	$\frac{3}{2}\sin\alpha\cos\alpha\Psi(\alpha,l^*)\leq0$ for $\alpha\in(0,\frac{\pi}{6})$.
	
	\emph{Step 2}. When $l\in[2l^*-n,l^*-1]$, we will prove that $\Psi(\alpha,l)+\Psi(\alpha,l+n-l^*+1)<0$, $\alpha\in(0,\frac{\pi}{6})$. For this purpose, we let
	\begin{equation*}
		\frac{\Psi(\alpha,l)}{\Psi(\alpha,l+n-l^*+1)}:=\Psi_1(\alpha,l;l^*)\Psi_2(l;l^*),
	\end{equation*}
	where
	\begin{equation}
		\label{eq2.2}
		\begin{aligned}
			\Psi_1(\alpha,l;l^*)&=
			\frac{\left(\frac{\sqrt{3}}{2}\sin\alpha\right)^{2l}\left(\frac{1}{2}\cos\alpha\right)^{2n-2l}-\left(\frac{1}{2}\cos\alpha\right)^{2n}}
			{\left(\frac{\sqrt{3}}{2}\sin\alpha\right)^{2l+2n-2l^*+2}\left(\frac{1}{2}\cos\alpha\right)^{2l^*-2l-2}-\left(\frac{1}{2}\cos\alpha\right)^{2n}}\\
			&=\frac{\left(\frac{1}{2}\cos\alpha\right)^{2n-2l^*+2}\left(\frac{\sqrt{3}}{2}\sin\alpha\right)^{2l}\left(\frac{1}{2}\cos\alpha\right)^{2l^*-2l-2}-\left(\frac{1}{2}\cos\alpha\right)^{2n}}
			{\left(\frac{\sqrt{3}}{2}\sin\alpha\right)^{2n-2l^*+2}\left(\frac{\sqrt{3}}{2}\sin\alpha\right)^{2l}\left(\frac{1}{2}\cos\alpha\right)^{2l^*-2l-2}-\left(\frac{1}{2}\cos\alpha\right)^{2n}}\\
			&>1,  
		\end{aligned}		
	\end{equation}
	and
	\begin{equation*}
		\begin{aligned}
			\Psi_2(l;l^*)=&\frac{\binom{2n+1}{2l+1}-\frac{1}{3}\binom{2n+1}{2l}}{\binom{2n+1}{2l+2n-2l^*+3}-\frac{1}{3}\binom{2n+1}{2l+2n-2l^*+2}}\\
			=&\frac{6n-8l+2}{-2n+8l^*-8l-6}\frac{(2n-2l^*+2l+3)!(2l^*-2l-1)!}{(2n-2l+1)!(2l+1)!}\\
			=&\frac{3n-4l+1}{-n+4l^*-4l-3}\frac{2l+2}{2l^*-2l}\frac{\prod_{k=0}^{2n-2l^*}(2n-2l^*+2l+3-k)}{\prod_{k=0}^{2n-2l^*}(2n-2l+1-k)}.
		\end{aligned}
	\end{equation*}	
	Since $\Psi_1(\alpha,l;l^*)>1$,  it remains to prove $\Psi_2(l;l^*)<-1$.
	
	By $l^*=\left[\frac{3n+1}{4}\right]$, we have $l^*=\left[\frac{3n-3}{4}\right]+1>\frac{3n-3}{4}$, that is, $n<\frac{4l^*+3}{3}$. Furthermore, when $n\geq4$, that is, $l^*\geq3$, it follows from $l\in[2l^*-n,l^*-1]$ that
	\begin{equation*}
		l\geq2l^*-n>2l^*-\frac{4l^*+3}{3}=\frac{2l^*-3}{3}\geq\frac{l^*-1}{2}.
	\end{equation*}
	This implies $l>l^*-l-1$. Moreover, it is easy to see that $\{\frac{2n-2l^*+2l+3-k}{2n-2l+1-k}\}_{k=0}^{2n-2l^*}$ is a positive increasing sequence. Hence, we obatin
	\begin{equation}
		\label{eq2.3}
		\begin{aligned}
			\frac{\prod_{k=0}^{2n-2l^*}(2n-2l^*+2l+3-k)}{\prod_{k=0}^{2n-2l^*}(2n-2l+1-k)}&=\prod_{k=0}^{2n-2l^*}\frac{2n-2l^*+2l+3-k}{2n-2l+1-k}\\
			&\geq\left(\frac{2n-2l^*+2l+3}{2n-2l+1}\right)^{2n-2l^*}\\
			&=\left(\frac{2n-2(l^*-l-1)+1}{2n-2l+1}\right)^{2n-2l^*}\\
			&>1.
		\end{aligned}	
	\end{equation}
	By $l\in[2l^*-n, l^*-1]$ and $l^*>\frac{3n-3}{4}$, it follows that
	\begin{equation*}
		l^*-l\leq n-l^*<n-\frac{3n-3}{4}=\frac{n+3}{4},
	\end{equation*}
	which, combining with $l\leq l^*-1$, implies $1\leq l^*-l<\frac{n+3}{4}$, and
	\begin{equation*}
		1-n\leq-n+4l^*-4l-3<0.
	\end{equation*}
	From $\frac{3n-3}{4}<l^*\leq\frac{3n+1}{4}$, it follows that $-3<4l^*-3n\leq1$. Thus we can conclude that
	\begin{equation}
		\label{eq2.4}
		\begin{aligned}
			\frac{3n-4l+1}{-n+4l^*-4l-3}\frac{2l+2}{2l^*-2l}&=-1-\frac{(8l^*-4n)l-(4l^*-n-3)l^*-3n-1}{4l^2-(8l^*-n-3)l+(4l^*-n-3)l^*}\\
			&<-1-\frac{(4l^*-3n+3)l^*-3n-1}{4l^2-(8l^*-n-3)l+(4l^*-n-3)l^*}\\
			&\leq-1-\frac{4l^*-3n-1}{4l^2-(8l^*-n-3)l+(4l^*-n-3)l^*}\\
			&\leq-1.
		\end{aligned}
	\end{equation}
	By \eqref{eq2.3} and \eqref{eq2.4}, we obtain $\Psi_2(l;l^*)<-1$, which, together with \eqref{eq2.2}, implies
	\begin{equation*}
		\Psi(\alpha,l)+\Psi(\alpha,l+n-l^*+1)<0, \ \alpha\in(0,\frac{\pi}{6}), \  l\in[2l^*-n, l^*-1].
	\end{equation*} 
	Obviously, $\Phi_n(0)=\Phi_n(\frac{\pi}{6})=0$ for  all $n\geq4$. This completes the proof.		
\end{proof}

Now we present a detailed  \textbf{proof of Theorem~\ref{th1.1}}.
\begin{proof}
Theorem \ref{th1.2} states that the constant $c^*=\inf_{\lambda>0}\frac{g(\lambda)}{\lambda}$ defined in \eqref{eq1.14} is the minimal wave speed of traveling waves of \eqref{eq1.17}. Recall that $(\kappa,\sigma)=(\cos\alpha,\sin\alpha)$, $\alpha\in[0,2\pi)$. To investigate the influence of  $\alpha$ on $c^*$, we denote $g(\lambda):=g(\lambda,\alpha)$ to highlight such relationship, and let 
\begin{equation}
	\label{eq2.5}	
	G(c,\lambda,\alpha):=c\lambda-g(\lambda,\alpha),  \ \lambda>0,
\end{equation}
where $c$ is a parameter. For the minimal wave speed, we need to find $\lambda^*>0$ such that $c^*=\frac{g(\lambda^*,\alpha)}{\lambda^*}$, which implies that $(c^*,\lambda^*)$ is a positive solution of the following system
\begin{equation*}
	\label{eq2.6}
	\left\{\begin{aligned}
		&G(c,\lambda,\alpha)=0,&&\lambda>0, \ \alpha\in[0,2\pi),\\
		&\frac{\partial{G(c,\lambda,\alpha)}}{\partial{\lambda}}=0, &&\lambda>0,  \ \alpha\in[0,2\pi).
	\end{aligned}\right.
\end{equation*}
Set
\begin{equation*}
	J:=\left(\begin{array}{cc}
		\frac{\partial{G(c,\lambda,\alpha)}}{\partial{c}}&\frac{\partial{G(c,\lambda,\alpha)}}{\partial{\lambda}}\\
		\frac{\partial^2{G(c,\lambda,\alpha)}}{\partial{c}\partial{\lambda}}&\frac{\partial^2{G(c,\lambda,\alpha)}}{\partial{\lambda^2}}
	\end{array}\right)\Bigg|_{(c^*,\lambda^*)}.
\end{equation*}
It follows from \eqref{eq1.14} that $\frac{\partial^2 g(\lambda,\alpha)}{\partial \lambda^2}>0$ for $\lambda>0$, $\alpha\in[0,2\pi)$, which implies $\frac{\partial^2 G(c,\lambda,\alpha)}{\partial\lambda^2}\big|_{(c^*,\lambda^*)}=-\frac{\partial^2 g(\lambda,\alpha)}{\partial \lambda^2}\big|_{(c^*,\lambda^*)}<0$. Combining $\frac{\partial G(c,\lambda,\alpha)}{\partial c}\big|_{(c^*,\lambda^*)}=\lambda^*>0$ with $\frac{\partial G(c,\lambda,\alpha)}{\partial\lambda}\big|_{(c^*,\lambda^*)}=0$, we obtain $|J|\neq0$. By the implicit function theorem, it follows that there exist two continuous functions $\lambda^*=\lambda^*(\alpha)$ and $c^*=c^*(\alpha)$ with respect to $\alpha\in[0,2\pi)$. Moreover,
\begin{equation*}		
	\frac{\mathrm{d} c^*(\alpha)}{\mathrm{d} \alpha}=-\frac{\frac{\partial G(c,\lambda,\alpha)}{\partial\alpha}}{\frac{\partial G(c,\lambda,\alpha)}{\partial c}} \Bigg|_{(c^*,\lambda^*)}.
\end{equation*}
Together with \eqref{eq2.5}, we have
\begin{equation*}
	\mathrm{sign}\left(\frac{\mathrm{d} c^*(\alpha)}{\mathrm{d} \alpha}\right)=\mathrm{sign}\left(-\frac{\partial G(c,\lambda,\alpha)}{\partial\alpha}\bigg|_{(c^*,\lambda^*)}\right)=\mathrm{sign}\left(\frac{\partial g(\lambda,\alpha)}{\partial\alpha}\bigg|_{(c^*,\lambda^*)}\right).
\end{equation*}
By substituting  $\kappa$, $\sigma$ with $\cos\alpha$, $\sin\alpha$, respectively, in the expression $g(\lambda):=g(\lambda,\alpha)$, we derive
\begin{equation*}	
	\begin{aligned}
		&\frac{\partial g(\lambda,\alpha)}{\partial\alpha}\bigg|_{(c^*,\lambda^*)}\\	
		=&\frac{\lambda^*(\alpha)}{6}\Bigg[\sin\alpha\left(\mathrm{e}^{-\lambda^*(\alpha)\cos\alpha}-\mathrm{e}^{\lambda^*(\alpha)\cos\alpha}\right)\\
		&+\left(\frac{\sqrt{3}}{2}\cos\alpha+\frac{1}{2}\sin\alpha\right)\left(\mathrm{e}^{\lambda^*(\alpha)(\frac{\sqrt{3}}{2}\sin\alpha-\frac{1}{2}\cos\alpha)}-\mathrm{e}^{-\lambda^*(\alpha)(\frac{\sqrt{3}}{2}\sin\alpha-\frac{1}{2}\cos\alpha)}\right)\\
		&+\left(\frac{\sqrt{3}}{2}\cos\alpha-\frac{1}{2}\sin\alpha\right)\left(\mathrm{e}^{\lambda^*(\alpha)(\frac{\sqrt{3}}{2}\sin\alpha+\frac{1}{2}\cos\alpha)}-\mathrm{e}^{-\lambda^*(\alpha)(\frac{\sqrt{3}}{2}\sin\alpha+\frac{1}{2}\cos\alpha)}\right)\Bigg].			
	\end{aligned}	
\end{equation*}
For $\alpha\in[0,2\pi)$, by applying $\mathrm{e}^{z}=\sum_{n=0}^{\infty}\frac{z^n}{n!}$, $z\in \mathbb{R}$, to the above formula, we get
\begin{equation}
	\label{eq2.10}	
	\begin{aligned}
		& \frac{\partial g(\lambda,\alpha)}{\partial\alpha}\bigg|_{(c^*,\lambda^*)} \\
		=&-\frac{\lambda^*(\alpha)}{3}\sin\alpha\sum_{n=0}^{\infty}\frac{(\lambda^*(\alpha)\cos\alpha)^{2n+1}}{(2n+1)!}\\
		&+\frac{\lambda^*(\alpha)}{3}\left(\frac{\sqrt{3}}{2}\cos\alpha+\frac{1}{2}\sin\alpha\right)\sum_{n=0}^{\infty}\frac{(\lambda^*(\alpha)(\frac{\sqrt{3}}{2}\sin\alpha-\frac{1}{2}\cos\alpha))^{2n+1}}{(2n+1)!}\\
		&+\frac{\lambda^*(\alpha)}{3}\left(\frac{\sqrt{3}}{2}\cos\alpha-\frac{1}{2}\sin\alpha\right)\sum_{n=0}^{\infty}\frac{(\lambda^*(\alpha)(\frac{\sqrt{3}}{2}\sin\alpha+\frac{1}{2}\cos\alpha))^{2n+1}}{(2n+1)!}\\
		=&\sum_{n=0}^{\infty}\frac{(\lambda^*(\alpha))^{2n+2}}{3(2n+1)!}\Bigg[\left(\frac{\sqrt{3}}{2}\cos\alpha+\frac{1}{2}\sin\alpha\right)\left(\frac{\sqrt{3}}{2}\sin\alpha-\frac{1}{2}\cos\alpha\right)^{2n+1}\\
		&+\left(\frac{\sqrt{3}}{2}\cos\alpha-\frac{1}{2}\sin\alpha\right)\left(\frac{\sqrt{3}}{2}\sin\alpha+\frac{1}{2}\cos\alpha\right)^{2n+1}-\sin\alpha(\cos\alpha)^{2n+1}\Bigg].
	\end{aligned}	
\end{equation}	
Furthermore, we can calculate that
\begin{equation*}
	\frac{g(\lambda,\alpha)}{\lambda}=\frac{g(\lambda,\alpha+\frac{\pi}{3})}{\lambda}, \  \frac{g(\lambda,\frac{\pi}{6}-\alpha)}{\lambda}=\frac{g(\lambda,\frac{\pi}{6}+\alpha)}{\lambda} \ \text{for all}\ \lambda>0, 
\end{equation*}
which implies that $c^*(\alpha)$ is $\frac{\pi}{3}$-periodic with $\alpha\in[0,2\pi)$ and symmetric about $\alpha=\frac{\pi}{6}$ in $[0,\frac{\pi}{3}]$. It remains to prove the monotonicity of $\frac{\partial g(\lambda,\alpha)}{\partial\alpha}\big|_{(c^*,\lambda^*)}$ in $[0,\frac{\pi}{6}]$. From \eqref{eq2.10} and $\Phi_n(\alpha)$ in Lemma~\ref{le2.1}, we see that
\begin{equation*}
	\frac{\partial g(\lambda,\alpha)}{\partial\alpha}\bigg|_{(c^*,\lambda^*)}=\sum_{n=0}^{\infty}\frac{(\lambda^*(\alpha))^{2n+2}}{3(2n+1)!}\Phi_n(\alpha), \ \alpha\in[0,\frac{\pi}{6}], \ n\in\mathbb{N}.
\end{equation*}	
From Lemma~\ref{le2.1}, we can conclude that $\frac{\mathrm{d}c^*(\alpha)}{\mathrm{d}\alpha}<0$ in $\alpha\in(0,\frac{\pi}{6})$ and $\frac{\mathrm{d}c^*(\alpha)}{\mathrm{d}\alpha}\big|_{\alpha=0,\frac{\pi}{6}}=0$. The proof is completed.		
\end{proof}

\section{Numerical simulations} \label{sec:Nume}
Theorem~\ref{th1.2} establishes the existence of a minimal speed  $c^*$ for traveling wave solutions, which depends on the angle in a highly complicated manner, as described by the expression~\eqref{eq1.14}. To validate our theoretical results and gain further insight into the minimal wave speed, we present numerical simulations in this section. Additionally, we numerically compute the spreading speed of~\eqref{eq1.5} by solving an initial value problem and investigate its relationship with the minimal wave speed.

\subsection{The minimal wave speed}
To illustrate the dependence of the minimal wave speed on an angle, here we present a numerical graph that shows this relationship, as given in Theorem~\ref{th1.1}. Specifically, we set $f'(0)=10$ and choose 2000 different values of $\lambda$ in \eqref{eq1.14}, that is, $\lambda_n=0.01n$, $n=1, 2, \dots, 2000$. This setup allows us to effectively depict the periodicity and monotonicity of $c^*(\alpha)$ with respect to $\alpha$, as shown in Figure~\ref{fig2}. Furthermore, the graph reveals that $c^*(\alpha)$ attains its maximum values at angles 
$\alpha=0,\frac{\pi}{3}$, $\frac{2\pi}{3}$, $\pi,\frac{4\pi}{3}$, $\frac{5\pi}{3}$, and its minimum values at angles $\alpha=\frac{\pi}{6}$, $\frac{\pi}{2}$, $\frac{5\pi}{6}$, $\frac{7\pi}{6}$, $\frac{3\pi}{2},\frac{11\pi}{6}$. This periodic behavior underscores the complex relationship between the wave speed and a directional angle, highlighting the complex dynamics captured by our model.

\begin{figure}[htbp]
	\centering
	{\includegraphics[width=2.7in,height=2.7in,clip]{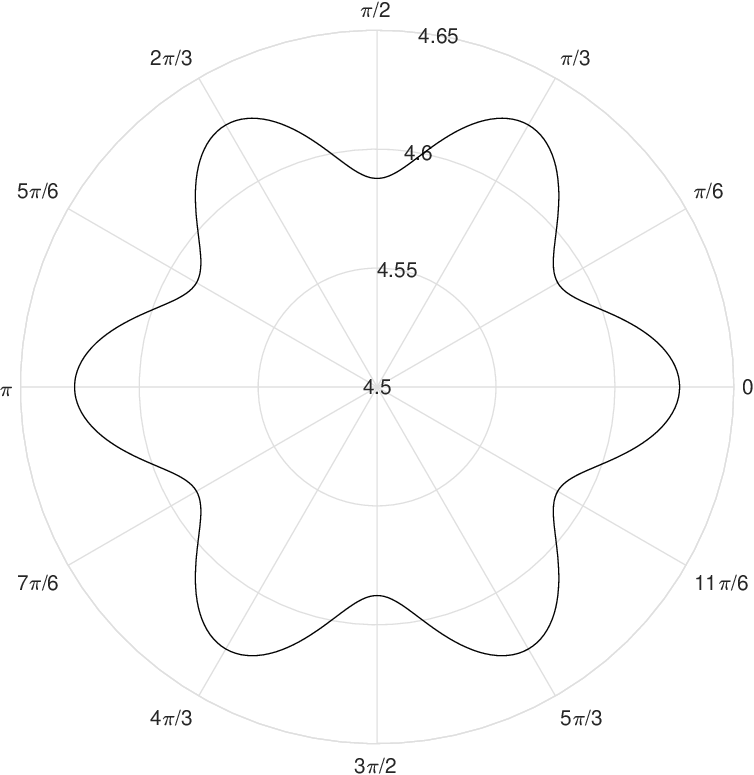} }
	\caption{The minimal wave speed  $c^*$ with respect to $\alpha$ in the polar coordinate system. The coordinates on the radius represent the minimal wave speed with range $[4.5,4.65]$.}\label{fig2}	
\end{figure}

The expression $\Phi_n(\alpha)$ with $[0,\frac{\pi}{6}]$ and $n\in\mathbb{N}$, defined in Lemma \ref{le2.1}, plays a key role in proving the monotonicity of $c^*$ with respect to $\alpha$ in Theorem~\ref{th1.1}. We can easily verify that $\Phi_n(\alpha)$ is $\frac{\pi}{3}$-periodic with $\alpha\in[0,2\pi)$ and symmetric about $\alpha=\frac{\pi}{6}$ in $[0,\frac{\pi}{3}]$ for $n\in\mathbb{N}$ after extending $[0,\frac{\pi}{6}]$ to $[0,2\pi)$, which is supported by the plot (Figure \ref{fig3}) of $\Phi_n(\alpha)$ on $[0,2\pi)$ for a number of different values of $n$. Moreover, Figure \ref{fig3} illustrates $\Phi_n(\alpha)\leq0$ on $[0,\frac{\pi}{6}]$ for $n=1, 2, 4, 20, 50, 100$ in Lemma \ref{le2.1}.

\begin{figure}[htbp]
	\centering
	{\includegraphics[width=4in,height=3in,clip]{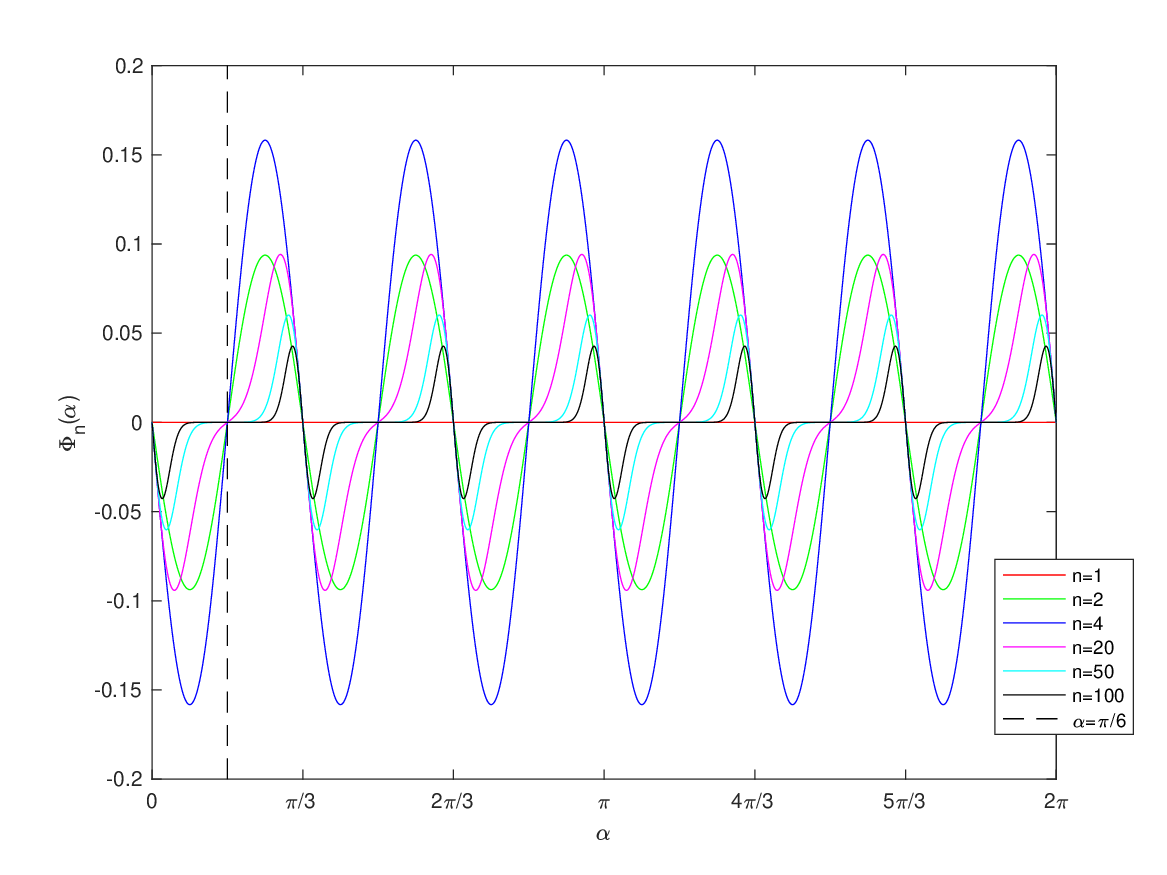} }
	\caption{The expression $\Phi_n(\alpha)$ with respect to $\alpha$ for different $n$.}\label{fig3}	
\end{figure}

\subsection{The spreading speed}

The spreading speed can be numerically computed using solutions of~\eqref{eq1.5} with the initial condition $u(x,y,0) := u_0(x,y) \in [0,1]$. However, the operator $\Delta_h$ in~\eqref{eq1.5} contains terms with irrational coordinates, which presents challenges for numerical discretization. To address this, we first find an invertible matrix transformation to map the hexagonal lattice onto a square lattice with a diagonal orientation, as illustrated in Figure~\ref{fig11}. The initial value problem is then solved numerically on the transformed square lattice.

\begin{figure}[htbp]
	\centering
	\begin{tikzpicture}[>=stealth,thick]
		\node (s00) at (0,0) [circle, draw=black]{};
		\node (s10) at (2,0) [circle, draw, fill=blue!50]{};
		\node (s20) at (4,0) [circle, draw, fill=red!50]{};
		\draw [dashed,] (s20)--(4,-1); \draw [dashed,] (s20)--(5,1); \draw [dashed,] (s20)--(5,0);
		\node (s-10) at (-2,0) [circle, draw, fill=blue!50]{};
		\node (s-20) at (-4,0) [circle, draw, fill=red!50]{};
		\node (s-11) at (-2,2) [circle, draw, fill=red!50]{};
		\draw [dashed,] (s-11)--(-2,3); \draw [dashed,] (s-11)--(-3,2);
		\node (s01) at (0,2) [circle, draw, fill=blue!50]{};
		\node (s11) at (2,2) [circle, draw, fill=blue!50]{};
		\node (s21) at (4,2) [circle, draw, fill=red!50]{};
		\draw [dashed,] (s21)--(5,3); \draw [dashed,] (s21)--(5,2);
		\node (s02) at (0,4) [circle, draw, fill=red!50]{};
		\node (s12) at (2,4) [circle, draw, fill=red!50]{};
		\node (s22) at (4,4) [circle, draw, fill=red!50]{};
		\node (s-2-1) at (-4,-2) [circle, draw, fill=red!50]{};
		\node (s-1-1) at (-2,-2) [circle, draw, fill=blue!50]{};
		\node (s0-1) at (0,-2) [circle, draw, fill=blue!50]{};
		\node (s1-1) at (2,-2) [circle, draw, fill=red!50]{};
		\draw [dashed,] (s1-1)--(2,-3); \draw [dashed,] (s1-1)--(3,-2);
		\node (s-2-2) at (-4,-4) [circle, draw, fill=red!50]{};
		\node (s-1-2) at (-2,-4) [circle, draw, fill=red!50]{};
		\node (s0-2) at (0,-4) [circle, draw, fill=red!50]{};
		\draw [-] (s02)--(s12)node[]{}; 
		\draw [dashed,] (s02)--(0,5); \draw [dashed,] (s02)--(1,5); \draw [dashed,] (s02)--(-1,4);
		\draw [-] (s12)--(s22)node[]{};
		\draw [dashed,] (s12)--(2,5); \draw [dashed,] (s12)--(3,5); 
		\draw [-] (s-11)--(s01)node[]{};
		\draw [-] (s01)--(s11)node[]{};
		\draw [-] (s11)--(s21)node[]{};
		\draw [-] (s-20)--(s-10)node[]{};
		\draw [-] (s-10)--(s00)node[]{};
		\draw [-] (s00)--(s10)node[]{};
		\draw [-] (s10)--(s20)node[]{};
		\draw [-] (s-2-1)--(s-1-1)node[]{};
		\draw [-] (s-1-1)--(s0-1)node[]{};
		\draw [-] (s0-1)--(s1-1)node[]{};
		\draw [-] (s-2-2)--(s-1-2)node[]{};
		\draw [-] (s-1-2)--(s0-2)node[]{};
		\draw [-] (s-20)--(s-2-1)node[]{};
		\draw [dashed,] (s-20)--(-4,1); \draw [dashed,] (s-20)--(-5,-1); \draw [dashed,] (s-20)--(-5,0);
		\draw [-] (s-2-1)--(s-2-2)node[]{};
		\draw [-] (s-11)--(s-10)node[]{};
		\draw [-] (s-10)--(s-1-1)node[]{};
		\draw [-] (s-1-1)--(s-1-2)node[]{};
		\draw [-] (s02)--(s01)node[]{};
		\draw [-] (s01)--(s00)node[]{};
		\draw [-] (s00)--(s0-1)node[]{};
		\draw [-] (s0-1)--(s0-2)node[]{};
		\draw [-] (s12)--(s11)node[]{};
		\draw [-] (s11)--(s10)node[]{};
		\draw [-] (s10)--(s1-1)node[]{};
		\draw [-] (s22)--(s21)node[]{};
		\draw [dashed,] (s22)--(4,5); \draw [dashed,] (s22)--(5,5); \draw [dashed,] (s22)--(5,4);
		\draw [-] (s21)--(s20)node[]{};
		\draw [-] (s-2-2)--(s-1-1)node[]{};
		\draw [dashed,] (s-2-2)--(-4,-5); \draw [dashed,] (s-2-2)--(-5,-5); \draw [dashed,] (s-2-2)--(-5,-4);
		\draw [-] (s-1-1)--(s00)node[]{};
		\draw [-] (s00)--(s11)node[]{};
		\draw [-] (s11)--(s22)node[]{};   
		\draw [-] (s-2-1)--(s-10)node[]{};
		\draw [dashed,] (s-2-1)--(-5,-3); \draw [dashed,] (s-2-1)--(-5,-2);
		\draw [-] (s-10)--(s01)node[]{};
		\draw [-] (s01)--(s12)node[]{};
		\draw [-] (s-20)--(s-11)node[]{}; 
		\draw [-] (s-11)--(s02)node[]{};
		\draw [-] (s-1-2)--(s0-1)node[]{};
		\draw [dashed,] (s-1-2)--(-2,-5); \draw [dashed,] (s-1-2)--(-3,-5); 
		\draw [-] (s0-1)--(s10)node[]{};
		\draw [-] (s10)--(s21)node[]{};
		\draw [-] (s0-2)--(s1-1)node[]{}; 
		\draw [dashed,] (s0-2)--(0,-5); \draw [dashed,] (s0-2)--(-1,-5); \draw [dashed,] (s0-2)--(1,-4);
		\draw [-] (s1-1)--(s20)node[]{};
		\node at (0.5,3.7) []{\footnotesize$(0,2)$};
		\node at (2.5,3.7) []{\footnotesize$(1,2)$};
		\node at (4.5,3.7) []{\footnotesize$(2,2)$};
		\node at (-1.4,1.7) []{\footnotesize$(-1,1)$};
		\node at (0.5,1.7) []{\footnotesize$(0,1)$};
		\node at (2.5,1.7) []{\footnotesize$(1,1)$};
		\node at (4.5,1.7) []{\footnotesize$(2,1)$};
		\node at (-3.4,-0.3) []{\footnotesize$(-2,0)$};
		\node at (-1.4,-0.3) []{\footnotesize$(-1,0)$};
		\node at (0.5,-0.3) []{\footnotesize$(0,0)$};
		\node at (2.5,-0.3) []{\footnotesize$(1,0)$};
		\node at (4.5,-0.3) []{\footnotesize$(2,0)$};
		\node at (-3.3,-2.3) []{\footnotesize$(-2,-1)$};
		\node at (-1.3,-2.3) []{\footnotesize$(-1,-1)$};
		\node at (0.6,-2.3) []{\footnotesize$(0,-1)$};
		\node at (2.6,-2.3) []{\footnotesize$(1,-1)$};
		\node at (-3.3,-4.3) []{\footnotesize$(-2,-2)$};
		\node at (-1.3,-4.3) []{\footnotesize$(-1,-2)$};
		\node at (0.6,-4.3) []{\footnotesize$(0,-2)$};
		\draw [->] (-7,-4)--(-7,-2)node[left=0.05cm]{\footnotesize$j$};
		\draw [->] (-8,-3)--(-6,-3)node[right=0.05cm]{\footnotesize$i$};	
	\end{tikzpicture}
	\caption{A square lattice with a diagonal line described by the coordinates $i$-$j$.}\label{fig11}	
\end{figure}
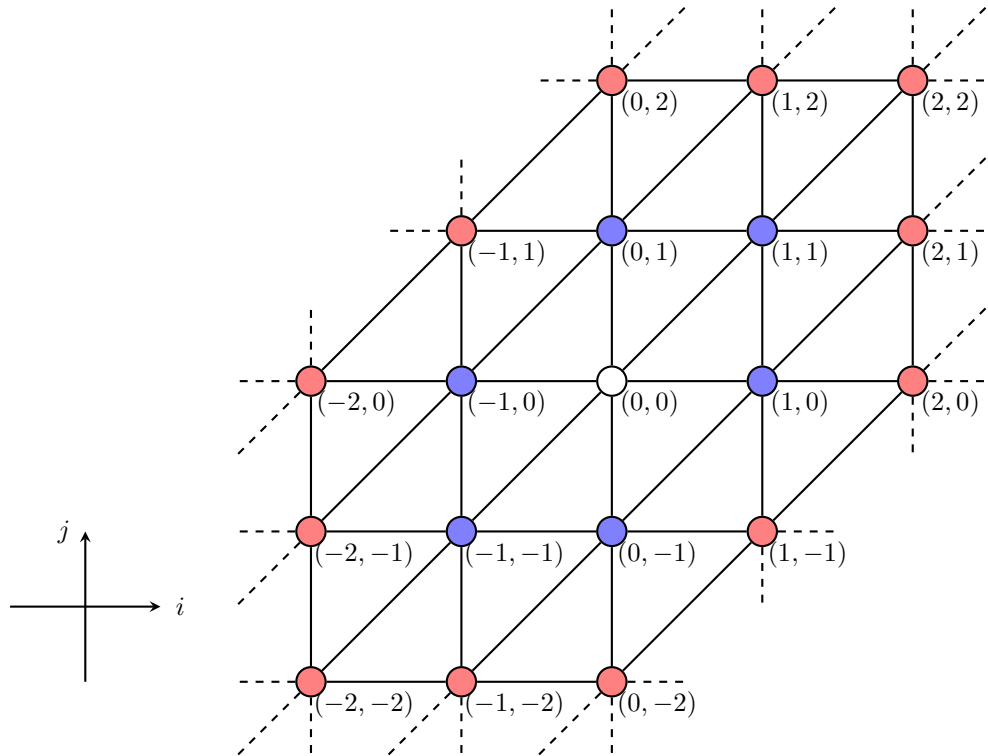

To perform  the transition between coordinate systems, we introduce the invertible matrix $A$, defined by
\begin{equation*}
	\begin{pmatrix}
		i\\
		j
	\end{pmatrix}
	= A
	\begin{pmatrix}
		x\\
		y
	\end{pmatrix}
	:= \begin{pmatrix}
		1 & \frac{\sqrt{3}}{3}\\
		0 & \frac{2\sqrt{3}}{3}
	\end{pmatrix}
	\begin{pmatrix}
		x\\
		y
	\end{pmatrix}, \quad (x, y) \in \mathbb{H},
\end{equation*}
where $(x, y)$ are the hexagonal coordinates and $(i, j)$ are the corresponding square lattice coordinates. A straightforward calculation yields $(i, j) = \left(x + \frac{\sqrt{3}}{3}y, \frac{2\sqrt{3}}{3}y\right) \in \mathbb{Z}^2$. Under this transformation, the operator $\Delta_h$ is converted to $\Delta_s$, which acts on functions $v: \mathbb{Z}^2 \times \mathbb{R} \rightarrow \mathbb{R}$ and is defined by
\begin{equation*}
	\Delta_s[v]_{i,j}(\cdot) := v_{i+1,j}(\cdot) + v_{i-1,j}(\cdot) + v_{i,j+1}(\cdot) + v_{i,j-1}(\cdot) + v_{i+1,j+1}(\cdot) + v_{i-1,j-1}(\cdot) - 6v_{i,j}(\cdot).
\end{equation*}
	
Next, we numerically solve the equation
\begin{equation*}
	v'(i, j, t) = \frac{1}{6} \Delta_s[v]_{i, j}(t) + f(v(i, j, t)), \quad (i, j) \in \mathbb{Z}^2, \ t > 0,
\end{equation*}
where $f(v) = av(1-v)$ with $a = 200$. The initial condition is $v(i, j, 0)= 1$ for $i,j=0$ and $v(i, j, 0)= 0$ elsewhere. The size of the domain is set to $\{(i,j)\in[-200, 200]\times[-200, 200]\}$. We record the times at which the solution first reaches $0.5$ on those nodes located on four rectangular rings, where the rings are centred around $(0,0)$ with side lengths $5$, $10$, $20$ and $80$, respectively. We choose all nodes on the shortest ring with side length $5$ and some nodes on other rings sparsely so that these nodes correspond one by one in all directions. By applying the inverse matrix $A$, we transform the coordinates of these nodes to the $x$-$y$ coordinate system. Note that the straight path remains straight after the transition. We then calculate the average speed between two nodes in one direction on the $n$-th ring and the $(n+1)$-th ring, denoted as $\bar{c}_n(\alpha)$, $n=1,2,3$. As shown in Figure~\ref{fig5}, the change of $\bar{c}_n(\alpha)$ suggests that there exists a spreading speed, which coincides with the minimal wave speed in all directions.

\begin{figure}[htbp]
	\centering
	{\includegraphics[width=3in,height=2.5in,clip]{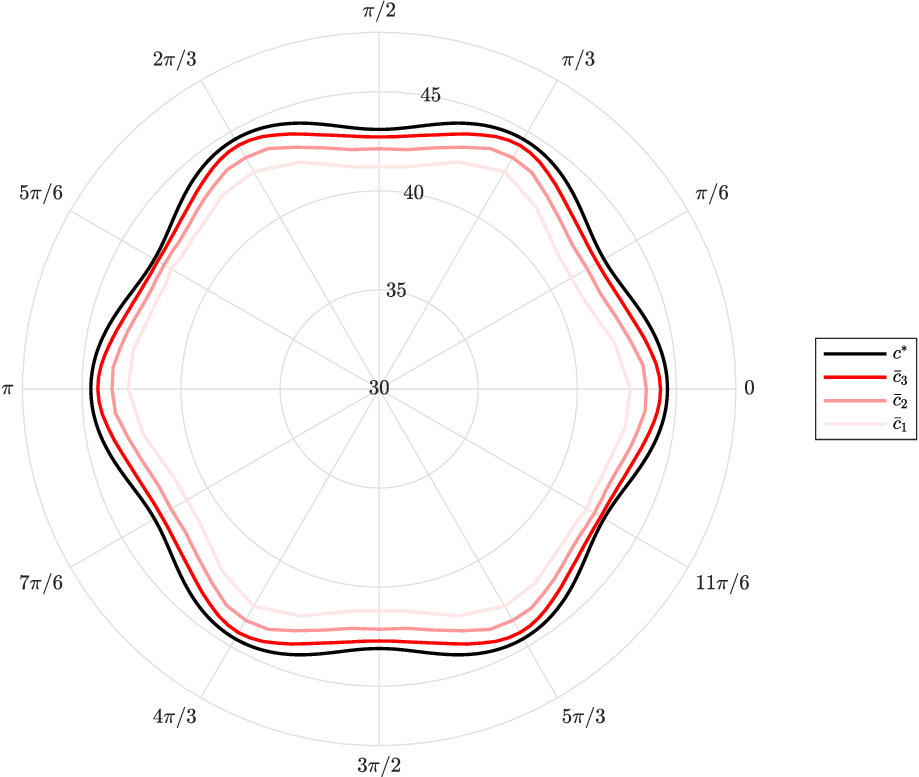} }
	\caption{Speeds $\bar{c}_1$, $\bar{c}_2$, $\bar{c}_3$ and $c^*$ with respect to $\alpha$ in the polar coordinate system. The coordinates on the radius represent the minimal wave speed or spreading speed with range $[30,48]$.}\label{fig5}	
\end{figure}

\section{Discussion} \label{sec:Dis}

Comparing with the existing studies for LDES on one-dimensional line grids or two-dimensional square lattices, this study broadens the scope by examining species dispersal within a honeycomb neighborhood structure, which would be more suitable in certain natural and habitat environments. Our model carefully describes individual movements within a hexagonal network, involving an accurate, while mathematically tractable, formulation of the neighboring nodes within a coordinate system. This approach enables us to explore how individuals propagate through such a complex structure, offering insights that are not readily apparent in simpler lattice models.

\subsection{Properties of the minimal wave speed on a square lattice}

It is interesting to note that the approach used to prove Theorem \ref{th1.1} can be readily adapted to analyze the periodicity and monotonicity of the minimal wave speed of \eqref{eq1.4} with respect to the angle on a two-dimensional square lattice. Let $\beta\in[0,2\pi)$ be an angle on the square lattice. Then, according to \cite{GW2}, we see that the minimal wave speed of \eqref{eq1.4} can be expressed by 
\begin{equation*}
	\begin{aligned}
		c_s^*:=\inf_{\nu>0}\frac{\mathrm{e}^{\nu\cos\beta}+\mathrm{e}^{-\nu\cos\beta}+\mathrm{e}^{\nu\sin\beta}+\mathrm{e}^{-\nu\sin\beta}-4+f'(0)}{\nu}:=\inf_{\nu>0}\frac{g_s(\nu,\beta)}{\nu}.
	\end{aligned}
\end{equation*}
From the same arguments  used to determine   $c^*(\alpha)$, we infer that there exists $\nu^*=\nu^*(\beta)$ and 
\begin{equation*}
	c_s^*(\beta)=\frac{g_s(\nu^*(\beta),\beta)}{\nu^*(\beta)}, \ \beta\in[0,2\pi).
\end{equation*}
Applying the method used to prove Theorem~\ref{th1.1}, we establish the monotonicity and periodicity of $c_s^*(\beta)$ with respect to $\beta$, as given in the next remark.
\begin{remark}
	\label{re4.1}
	The quantity $c_s^*(\beta)$ is $\frac{\pi}{2}$-periodic with $\beta\in[0,2\pi)$, monotonically decreasing in $\beta\in[0,\frac{\pi}{4}]$ and monotonically increasing in $\beta\in[\frac{\pi}{4},\frac{\pi}{2}]$. 
\end{remark}
\begin{proof}
Using the implicit function theorem, we can obtain
\begin{equation*}	
	\mathrm{sign}\left(\frac{\mathrm{d} c_s^*(\beta)}{\mathrm{d} \beta}\right)=\mathrm{sign}\left(\frac{\partial g_s(\nu,\beta)}{\partial\beta}\bigg|_{(c_s^*,\nu^*)}\right).
\end{equation*}
By $\mathrm{e}^{z}=\sum_{n=0}^{\infty}\frac{z^n}{n!}$, $z\in \mathbb{R}$, it follows that 
\begin{equation*}
	\begin{aligned}
		\frac{\partial g_s(\nu,\beta)}{\partial\beta}\bigg|_{(c_s^*,\nu^*)}=&\nu^*(\beta)\left[\sin\beta\left(\mathrm{e}^{-\nu^*(\beta)\cos\beta}-\mathrm{e}^{\nu^*(\beta)\cos\beta}\right)+\cos\beta\left(\mathrm{e}^{\nu^*(\beta)\sin\beta}-\mathrm{e}^{-\nu^*(\beta)\sin\beta}\right)\right]\\
		=&\sum_{n=0}^{\infty}\frac{2(\nu^*(\beta))^{2n+2}}{(2n+1)!}\sin\beta\cos\beta\left[(\sin\beta)^{2n}-(\cos\beta)^{2n}\right].		
	\end{aligned}
\end{equation*}
Therefore we get
\begin{equation*}
	\frac{\mathrm{d} c_s^*(\beta)}{\mathrm{d} \beta}\left\{ \begin{aligned}
		&<0, &&\beta\in(0,\frac{\pi}{4}),\\
		&>0, &&\beta\in(\frac{\pi}{4},\frac{\pi}{2}),\\
		&=0, &&\beta=0, \frac{\pi}{4}, \frac{\pi}{2}.
	\end{aligned}\right.
\end{equation*}
Furthermore, we can conclude that
\begin{equation*}
	\frac{g_s(\nu,\beta)}{\nu}=\frac{g_s(\nu,\beta+\frac{\pi}{2})}{\nu}, \   \frac{g_s(\nu,\frac{\pi}{4}-\beta)}{\nu}=\frac{g_s(\nu,\frac{\pi}{4}+\beta)}{\nu}, \ \beta\in[0,2\pi), \ \nu>0.
\end{equation*}
This completes the proof.
\end{proof}

\subsection{Outlook}

There remain many interesting and important questions regarding propagation dynamics on hexagonal lattices that are worth further investigation. For example, the coincidence of the minimal wave speed and spreading speed observed numerically needs to be proved. The stability of traveling waves in different senses is an interesting topic. Additionally, analyzing, or even rigorously defining, the generalized traveling waves along irrational directions remains a challenging problem. Studying the angular dependence of bistable equations is a natural next step. We anticipate that the bistable case will exhibit richer dynamics, including the pinning phenomenon and a more complex angular dependence of the wave speed. Furthermore, extending our model to study spreading dynamics on lattices with other complex dispersal mechanisms represents a promising direction for future research. Such mechanisms may include non-local dispersal, modeled by multi-layer connections, or fat-tailed dispersal kernels, which may result in accelerating spreading fronts. Finally, we also hope to find applications of this lattice framework to real-world phenomena, such as population dynamics and crystal growth.



\end{document}